\newtheorem{theorem}{Theorem}[section]
\newtheorem{lemma}[theorem]{Lemma}
\newtheorem{proposition}[theorem]{Proposition}
\newtheorem{remark}[theorem]{Remark}
\newtheorem{example}[theorem]{Example}
\newtheorem*{theorem*}{Theorem}
\DeclareMathOperator{\Spec}{Spec}
\DeclareMathOperator{\argmin}{argmin}
\newcommand{\subord}{\ \frame{$\ \vdash$}\; }
\newcommand{\boxdiag}{\ \frame{$\smallsetminus$}\;}
\title{Subordination methods for free deconvolution}
\author[O. Arizmendi]{Octavio Arizmendi}
\author[P. Tarrago]{Pierre Tarrago}
\author[C. Vargas]{Carlos Vargas}
\email{octavius@cimat.mx, pierre.tarrago@cimat.mx, carlosv@cimat.mx}
\address{Department of Probability and Statistics, CIMAT, Guanajuato, Mexico }
\begin{document}

\maketitle

\begin{abstract}
In this paper, we give subordination functions for free additive and free multiplicative deconvolutions in some domain of the complex half-plane, under the condition that the distributions admit moments, respectively, of second order for the additive deconvolution and of fourth order for the multiplicative one. Our method of proof allows us to give an algorithm to calculate these subordinations functions, and thus the associated Cauchy transforms, for complex numbers with imaginary part bigger than a parameter depending on the measure to deconvolve. This reduces the problem of free deconvolutions to the one of the classical deconvolution with a Cauchy distribution and thus combined with known methods for the latter problem we are able to solve the deconvolution problem for the scalar case.  We present also an extension of these results to the case of operator valued distributions.
\end{abstract}

\section{Introduction}

The relation between Free Probability and Large Random Matrix Theory is well known in the literature. The story begins with Voiculescu's  seminal paper \cite{Voi91}, where he shows that large Gaussian random matrices behave like free (semicircular) random variables. The idea is that if we have two large random matrices $A_n$ and $B_n$, whose eigenspaces are in random positions, one may replace the pair $(A_n,B_n)$ by a pair of operators $(A,B)$ which are in \emph{free relation}.  This phenomenon is now known as \emph{asymptotic freeness} and has been extended to other matrix ensambles such as unitarily invariant matrices \cite{Voi91},  Wigner matrices \cite{AGZ} or permutation matrices  \cite{Nica}. 

This insight has led to solve many problems in random matrix theory in the \emph{asymptotic regime}, see for example \cite{ANV, BSTV, BMS, BF, CDM, Sh}. In particular the problem of finding the asymptotic spectral distribution of the sum and product of random matrices may be reduced to calculate the free additive convolution ($\boxplus$) and the free multiplicative convolution ($\boxtimes$) of measures  \cite{BV93,Voi85}.

     In theory, there are combinatorial \cite{NiSp} and analytic \cite{BV93} tools to calculate the above mentioned free convolutions. However, in practice, given free random variables $X$ and $Y$ with distributions $\mu$ and $\nu$,  it is essentially impossible to calculate explicitly the distributions of $X+Y$, $XY$ (or more generally any polynomial $P(X,Y)$) unless $X$ and $Y$ are very specific.  

	Now, there are two basic approaches to give \emph{approximations} to these convolutions. The first one, based on the combinatorial theory of Speicher \cite{Sp94}, amounts calculating the moments of $P(X,Y)$ up to a certain order and choose a (non-unique) distribution with these moments as a candidate for this approximation.
	 
The second one, which has shown to be very effective, (see e.g. \cite{BSTV, BMS}) is based on the so-called \emph{free subordination} \cite{BB,Bi,Voi93,Voi00}. The main idea is that certain fixed point equations arising from free subordination are analytically controllable and numerically implementable. 

In order to describe  free subordination we need to introduce the Cauchy transform. For a probability measure $\mu$ we denote by $G_{\mu}:\mathbb{C}^{+}\to \mathbb{C}^{-}$ its Cauchy transform and its reciprocal $F_{\mu}:\mathbb{C}^{+}\to \mathbb{C}^{+}$ defined by $$G_{\mu}(z)=\int_{\mathbb{R}}\frac{1}{z-t}d\mu(t) \quad and  \quad  F_{\mu}(z)=\frac{1}{G_{\mu}(z)},  \quad z \in\mathbb{C}^{+}, $$
where $\mathbb{C}^{+}$ and $\mathbb{C}^{-}$ denote the complex upper and lower half plane, respectively. This approach to free convolution is based on the fact that one can recover the distribution $\mu$ from the values of the Cauchy transform near the real line via the Stieltjes inversion formula (see Section 2). 

The main result which is the basis to this approach is the following result of Belinschi and Bercovici \cite{BB}.

\begin{theorem}
\label{free convolution}
Given probability measures $\mu,\nu$ on $\mathbb{R}$, there exist unique
functions $\omega_1,\omega_2:\mathbb{C}^+\to\mathbb{C}^+$ such that

(1) $\Im {\omega_j(z)}\geq \Im z $  for $z\in \mathbb{C}^+$ 
and
$$\lim_{y\to\infty} \frac{\omega_j(iy)}{iy}=1,~\quad j=1,2.$$

(2) $ F_{\mu\boxplus \nu} (z) = F_{\mu}(\omega_1(z)) = F_{\nu}(\omega_2(z)).$  

(3) $\omega_1(z)+\omega_2(z)=z+F_{\mu\boxplus \nu} (z)$   for all $z\in \mathbb{C}^+$ .

(4) Denote by $h_{1}(w)=w-F_{\mu_{1}}(w), \tilde{h}_{2}(w)=w-F_{\mu_{1}}(w)$ and $T_{z}(w)=h_1(h_2(w) + z) + z$. Then for any $w\in\mathbb{C}^{+}$,  the iterated function $T_{z}^{\circ n}(w)$ converges to $w_{2}(z)$. 
\end{theorem}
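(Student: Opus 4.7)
The plan is to construct $\omega_2(z)$, for each $z \in \mathbb{C}^+$, as the attracting fixed point of the self-map $T_z$ described in~(4), recover $\omega_1(z)$ and $F_{\mu\boxplus\nu}(z)$ from it algebraically, and then verify the analytic and asymptotic conditions. After fixing the sign convention so that $T_z$ is genuinely a self-map of $\mathbb{C}^+$ (that is, working with $h_j(w) = F_{\mu_j}(w) - w$, which satisfies $\Im h_j(w) \geq 0$ on $\mathbb{C}^+$ by the Nevanlinna representation of $F_{\mu_j}$), we check directly that $\Im T_z(w) \geq \Im z$ for every $w \in \mathbb{C}^+$, so $T_z : \mathbb{C}^+ \to \mathbb{C}^+$ and, moreover, its image lies in the strict sub-halfplane $\{\Im w \geq \Im z\}$.

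\emph{Fixed point step (this is where the work is).} The existence of a unique fixed point $\omega_2(z) \in \mathbb{C}^+$ and the convergence $T_z^{\circ n}(w) \to \omega_2(z)$ for every $w \in \mathbb{C}^+$ will come from Denjoy--Wolff theory. By the Schwarz--Pick lemma, $T_z$ is non-expansive in the hyperbolic metric on $\mathbb{C}^+$, and because $T_z(\mathbb{C}^+) \subset \{\Im w \geq \Im z\}$ it cannot be a hyperbolic automorphism. The Denjoy--Wolff theorem then guarantees that the iterates converge locally uniformly to a constant: either an interior fixed point, or a boundary point in $\mathbb{R} \cup \{\infty\}$. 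The real line is ruled out by $\Im T_z(w) \geq \Im z > 0$, and the point $\infty$ is ruled out by the asymptotic $h_j(iy)/iy \to 0$, which in turn follows from $\mu_j$ being a probability measure. The interior fixed point so obtained is $\omega_2(z)$, which simultaneously gives (4) and the existence half of (1).

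\emph{Identifications and closure.} Set $\omega_1(z) := h_2(\omega_2(z)) + z$ and $F_{\mu\boxplus\nu}(z) := F_{\mu_1}(\omega_1(z))$. The fixed point identity $\omega_2 = h_1(\omega_1) + z$ rewrites as $\omega_1 + \omega_2 = z + F_{\mu_1}(\omega_1)$, and symmetrically as $\omega_1 + \omega_2 = z + F_{\mu_2}(\omega_2)$, producing both (2) and (3). Holomorphy of $\omega_j$ in the parameter $z$ follows from the implicit function theorem applied at the attracting fixed point; the asymptotic $\omega_j(iy)/iy \to 1$ is obtained by passing to the limit $y \to \infty$ in the subordination identities using $F_{\mu_j}(iy)/iy \to 1$. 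Uniqueness in (1) is then routine: any competing pair would yield a second fixed point of $T_z$ in $\mathbb{C}^+$, contradicting attractivity. To justify the notation $F_{\mu\boxplus\nu}$, one verifies via the Nevanlinna representation that this function is the reciprocal Cauchy transform of a probability measure and identifies it with the free additive convolution through the identity $R_F = R_{\mu_1} + R_{\mu_2}$ on a sector near infinity. The main obstacle is the Denjoy--Wolff step, specifically excluding escape of iterates to $\infty$; it is exactly here that the probability-measure hypothesis on $\mu_j$ enters in an essential way.
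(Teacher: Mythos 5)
The paper does not give a proof of this theorem at all: it is stated as a known background result and attributed to Belinschi and Bercovici \cite{BB}. So there is no ``paper's proof'' to compare against; what you have written is a (largely correct) summary of the Denjoy--Wolff approach that is the actual content of \cite{BB}. Two side remarks on the statement itself that you partially caught: with the paper's own later convention $h_\mu(w) = w - F_\mu(w)$ one has $\Im h_\mu \leq 0$, so $h_2(w)+z$ need not lie in $\mathbb{C}^+$ and $h_1(h_2(w)+z)$ is not defined; the correct iteration is $T_z(w) = z + h_\mu(z + h_\nu(w))$ with $h_\mu = F_\mu - \mathrm{id}$, as you use (equivalently $T_z(w)=z - h_1(z - h_2(w))$ in the paper's sign). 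Also ``$\tilde h_2(w)=w-F_{\mu_1}(w)$'' and ``$w_2$'' in item (4) are misprints for $h_2(w)=w-F_\nu(w)$ and $\omega_2$.

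There is, however, a real gap at the place you yourself flag as the crux. Denjoy--Wolff gives convergence of $T_z^{\circ n}$ to a constant $p \in \overline{\mathbb{C}^+}\cup\{\infty\}$, and $\Im T_z(w) \geq \Im z > 0$ does exclude $p\in\mathbb{R}$. But the claim that $h_j(iy)/(iy)\to 0$ excludes $p=\infty$ is not a proof. The Julia--Carath\'eodory criterion for $\infty$ to be the Denjoy--Wolff point is $\liminf_{\Im w\to\infty}\Im T_z(w)/\Im w \geq 1$, and controlling $\Im h_\mu(z + h_\nu(w))/\Im w$ is not an immediate consequence of the asymptotics of $h_\mu$ and $h_\nu$ separately: the intermediate argument $z + h_\nu(w)$ can tend to infinity tangentially (imaginary part bounded below by $\Im z$ but not growing, real part unbounded), and in that regime $h_\mu(\cdot)/\cdot$ need not vanish. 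Note also that the theorem is stated for arbitrary probability measures, so one cannot invoke finite variance to force $h_\nu(iy)$ to stay bounded. Closing this requires the more delicate analysis in \cite{BB} (and the identification $F=F_{\mu\boxplus\nu}$ and uniqueness there likewise involve additional arguments beyond the fixed-point step). As written, the key step is acknowledged but not carried out.
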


The functions $\omega_1$ and $\omega_2$ are known as subordination functions.  It is worth mentioning that because of their analytical properties $\omega_1$ and $\omega_2$ correspond to $F$-transforms of certain measures, sometimes denoted by $\mu_1\subord \mu_2$ and $\mu_2\subord \mu_1$. For graphs, a specific construction of operators with this distribution has been done in \cite{ALS}, where part (3) is interpreted as a decomposition of the free product of graphs in two \emph{branches}.

  Apart from its theoretical importance, the above theorem has very nice applications when trying to estimate the density of free convolutions, since one may implement numerically approximations for  $\omega_1$ (similarly  for $\omega_2$) by the application of (4) and then use (2) to calculate the $F$-transform of $\mu\boxplus \nu$.  Probably, for applications in Random Matrices,  the most important result in this direction comes from the paper of Belinschi, Mai and Speicher \cite{BMS} where they use (operator valued) free additive subordination to solve the problem of finding the distribution of self-adjoint polynomials in free random variables.

In this paper, we consider an inverse problem known as free deconvolution ($\boxminus$).  The motivation is given by the following problem in random matrix theory.  Suppose we have a large random matrix  $B_n$ which is perturbed by some additive noise which one knows statistically, say $X_n$, then one has the information of the matrix  $$A_n=B_n+X_n.$$ One wants to recover the eigenvalue distribution of $B_n$ in terms of the eigenvalue distributions of $A_n$ and of $X_n$. Following the above considerations one is led to replace the triplet  $(A_n,B_n,X_n)$ by a triplet of operators $(A,B,X)$  such that $A=B+X$ with $B$ and $X$ are free. Obtaining the distribution of $\mu_B$ of $B$ in terms of the distribution of $A$,  $\mu_A$,  and the distribution of $X$, $\mu_X$, is known as deconvolving $X$ from $A$, and the distribution of $B$ is called the free additive deconvolution \cite{OD1,OD2}.

A combinatorial approach has been considered in  \cite{BGD} and amounts to calculate the moments of $\mu_A$ and $\mu_B$ up to a certain order, then calculating their free cumulants  and substracting them. One finally chooses a (non-unique) distribution with these free cumulants as an approximation. This approach has obvious limitations such as moment conditions or non-uniqueness. On the other hand, in \cite[ch. 17]{DC}, the authors propose an analytic method to approximate free deconvolution, however their method rely on a very specific functional equation which hold for the case of Marchenho-Pastur distribution.

Our main results give a general solution to free additive and free multiplicative deconvolutions which follows the lines of Theorem \ref{free convolution}.  However, in the deconvolution case, there is no hope to get such a subordination function in the whole upper half plane $\mathbb{C}^+$. Indeed,   if $\mu_{1}\boxplus\mu_{2}=\mu_{3}$, then $G_{\mu_3}(\mathbb{C}^{+})\subset G_{\mu_{1}}(\mathbb{C})\cap G_{\mu_{2}}(\mathbb{C})$, which yields directly (at least at a set-theoretical level) the existence of a subordination function $w_{2}:\mathbb{C}^{+}\to\mathbb{C}^{+}$ such that $G_{\mu_{3}}=G_{\mu_{2}}\circ w_{2}$. But the same inequality $G_{\mu_{3}}(\mathbb{C}^{+})\subset G_{\mu_{2}}(\mathbb{C}^{+})$ prevents us from finding a subordination function $w_{3}$ defined on $\mathbb{C}^{+}$ such that $G_{\mu_{2}}=G_{\mu_{3}}\circ w_{3}$. Therefore, the purpose of our result is to build a subordination function in a particular sub-domain which is wide enough to allow computations. 

\begin{theorem} \label{MainSA1}
Let $\mu_{1}$ and $\mu_{3}$ be probability measures on $\mathbb{R}$ with finite variance $\sigma_{1}^{2}$ and $\sigma_{3}^{2}$, and let  $\mathbb{C}_{2\sqrt{2}\sigma_{1}}=\{ z\in \mathbb{C},\Im z>2\sqrt{2}\sigma_{1}\}$. There exist unique functions $\omega_{1},\omega_{3}:\mathbb{C}_{2\sqrt{2}\sigma_{1}}\to \mathbb{C}^{+}$, such that 

(1) $\Im {\omega_j(z)}\geq \frac{1}{2}\Im z $  for $z\in \mathbb{C}_{2\sqrt{2}\sigma_{1}}$ 
and
$$\lim_{y\to\infty} \frac{\omega_j(iy)}{iy}=1,~\quad j=1,3.$$

(2) If $\mu_{2}$ is such that $\mu_{1}\boxplus\mu_{2}=\mu_{3}$, then 
$$F_{\mu_{2}}(z)=F_{\mu_{3}}[w_{3}(z)]=F_{\mu_{1}}[w_{1}(z)]$$ for $z\in\mathbb{C}_{2\sqrt{2}\sigma_{1}}$.

(3) $\omega_1(z)-\omega_3(z)=F_{\mu_{3}}[w_{3}(z)]-z$   for all $z\in \mathbb{C}_{2\sqrt{2}\sigma_{1}}$ .

(4) Denote by $h_{1}(w)=w-F_{\mu_{1}}(w), \,\tilde{h}_{3}(w)=F_{\mu_{3}}(w)+w$ and $T_{z}(w)=h_{1}(\tilde{h}_{3}(w)-z)+z$. Then for any $w$ with $\Im w>(3\Im z)/4$,  the iterated function $T_{z}^{\circ n}(w)$ converges to $w_{3}(z)\in \mathbb{C}^+$ independent of $w$.
\end{theorem}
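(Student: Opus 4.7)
The plan is to prove (4) first—obtaining $\omega_3$ as the unique attracting fixed point of $T_z$—and then to deduce (1)--(3) by explicit construction and comparison with Theorem~\ref{free convolution}. The core analytic input is the variance bound
\[
|h_\mu(z) - m| \leq \frac{\sigma^2}{\Im z}, \qquad z \in \mathbb{C}^+,
\]
valid for any probability measure $\mu$ with mean $m$ and variance $\sigma^2$, which one reads off directly from the Nevanlinna representation of the Pick function $-h_\mu = F_\mu - \mathrm{id}$.

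Set $\Omega_z = \{w \in \mathbb{C} : \Im w > 3\Im z/4\}$ for $z \in \mathbb{C}_{2\sqrt{2}\sigma_1}$. The first step is to show that $T_z$ maps $\Omega_z$ into a compactly contained subset. Since $\Im F_{\mu_3}(w) \geq \Im w$ on $\mathbb{C}^+$, for $w \in \Omega_z$ one has $\Im(\tilde h_3(w) - z) \geq 2\Im w - \Im z > \Im z/2 > 0$, so $h_1$ acts on a point in the upper half-plane. Combining this with the variance estimate applied to $h_1$ yields $\Im T_z(w) \geq \Im z - 2\sigma_1^2/\Im z$, which exceeds $3\Im z/4$ exactly when $(\Im z)^2 > 8\sigma_1^2$; this is the precise origin of the threshold $2\sqrt{2}\sigma_1$. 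The same estimate gives $|T_z(w) - z| \leq |m_1| + 2\sigma_1^2/\Im z$, so $T_z(\Omega_z)$ is bounded and its closure sits compactly inside $\Omega_z$.

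With this relatively compact self-map property in hand, I invoke the Earle--Hamilton fixed-point theorem (equivalently, the Schwarz--Pick lemma makes $T_z$ a strict contraction in the Poincaré metric of $\Omega_z$). This yields a unique fixed point $\omega_3(z) \in \Omega_z$ and the convergence $T_z^{\circ n}(w) \to \omega_3(z)$ for every $w \in \Omega_z$, establishing (4); holomorphy of $\omega_3$ follows from a normal-families argument. For (1)--(3), I set $\omega_1(z) := \tilde h_3(\omega_3(z)) - z$; the fixed-point equation $\omega_3 = h_1(\omega_1) + z$ then algebraically yields $\omega_1 - \omega_3 = F_{\mu_3}(\omega_3) - z$ (part 3) and $F_{\mu_1}(\omega_1) = \omega_1 - h_1(\omega_1) = F_{\mu_3}(\omega_3)$. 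When a $\mu_2$ with $\mu_1 \boxplus \mu_2 = \mu_3$ exists, applying Theorem~\ref{free convolution} to $(\mu_1, \mu_2)$ and identifying $\omega_3(z)$ with $\tilde\omega_2^{-1}(z)$ (the inverse of the convolution subordination) gives $F_{\mu_2}(z) = F_{\mu_3}(\omega_3(z))$, completing (2). The estimate $\Im \omega_j(z) \geq \Im z/2$ in (1) follows from $\omega_3 \in \Omega_z$ together with a short computation for $\omega_1$, and the asymptotic $\omega_j(iy)/iy \to 1$ comes from the a priori bound $|\omega_3(iy) - iy| \leq |m_1| + 2\sigma_1^2/y$.

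The main obstacle will be the compact-containment step: the threshold $2\sqrt{2}\sigma_1$ is sharp within this approach, arising as the precise balance between the variance bound on $h_1$ and the factor-of-two expansion in the imaginary part introduced by $\tilde h_3$. Any attempt to weaken the hypothesis on $\Im z$ immediately breaks the self-map property of $T_z$, which explains why the subordination can be controlled only on the half-plane $\mathbb{C}_{2\sqrt{2}\sigma_1}$. A secondary subtlety is uniqueness within conditions (1)--(3) alone: since $\{\Im w \geq \Im z/2\}$ is strictly larger than $\Omega_z$, the asymptotic normalization must be used (via analytic continuation from large $\Im z$, where the fixed point automatically lies in $\Omega_z$) to rule out spurious branches.
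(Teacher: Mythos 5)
Your argument is correct and follows the same skeleton as the paper's: restrict to the half-plane $\Omega_z=\{\Im w>3\Im z/4\}$, use the variance bound $|h_{\mu_1}(w)-m_1|\le\sigma_1^2/\Im w$ together with $\Im F_{\mu_3}\ge\Im$ to verify $T_z(\Omega_z)$ lies in a disk around $z$ strictly inside $\Omega_z$, apply a fixed-point theorem to get $\omega_3$, set $\omega_1=\tilde h_3(\omega_3)-z$, and read (1), (3) and $F_{\mu_1}(\omega_1)=F_{\mu_3}(\omega_3)$ off the fixed-point identity. (Your Earle--Hamilton/strict-contraction invocation and the paper's Denjoy--Wolff argument that $T_z$ is not an automorphism are interchangeable here, since both rest on the same disk estimate $T_z(\Omega_z)\subset D(z,2\sigma_1^2/\Im z)$.)

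The one place you genuinely diverge is the proof of $F_{\mu_2}=F_{\mu_3}\circ\omega_3$ in (2). You apply Theorem~\ref{free convolution} to $(\mu_1,\mu_2)$ and identify $\omega_3$ with the local inverse of the forward subordination $\tilde\omega_2$; this works, but you should explicitly verify that $\tilde\omega_2^{-1}(z)$ satisfies $T_z(w)=w$ (it does, using parts (2) and (3) of Theorem~\ref{free convolution}) and that it lies in $\Omega_z$ for $\Im z$ large, and then extend by analyticity as you sketch. The paper instead derives, directly from the fixed-point equation and Maassen's lemma, the Voiculescu-transform identity $\phi_{\mu_3}[F_{\mu_3}(\omega_3(z))]-\phi_{\mu_1}[F_{\mu_3}(\omega_3(z))]=z-F_{\mu_3}(\omega_3(z))$ (Proposition~\ref{analyAddScal}), and then substitutes $\phi_{\mu_2}=\phi_{\mu_3}-\phi_{\mu_1}$. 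The paper's route has the advantage of being entirely self-contained and, more importantly, of producing an identity that holds with no reference to $\mu_2$ at all; that $\mu_2$-free identity is precisely what powers Theorem~\ref{MainSA2}. Your shortcut through Theorem~\ref{free convolution} is perfectly adequate for (2) of the present theorem, but note that it would not immediately hand you Theorem~\ref{MainSA2}, so in the paper's organization the $\phi$-transform computation is doing double duty.
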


Unlike Theorem \ref{free convolution}, the above theorem only gives a subordination function above the line of imaginary part $2\sqrt{2}\sigma_{1}$, this is possibly not optimal for many cases, but in general there is no hope to give a subordination function for an imaginary part much lower (see Remark \ref{optimalImaginary} more details). Hence, in the case where one wants to recover $\mu_{2}$ from the knowledge of $\mu_{1}$ and $\mu_{3}$, Theorem \ref{MainSA1} only recovers
$F_{\mu_{2}}(z+i2\sqrt{2}\sigma_{1})$, which is the $F$-transform of the classical convolution $\mu_{2}*\mathcal{C}$  of $\mu_{2}$ and a centered Cauchy distribution $\mathcal{C}$  with parameter $2\sqrt{2}\sigma_{1}$. Thus the problem is reduced to the one of \emph{classically} deconvolving a Cauchy distribution, which amounts to solve a Fredholm equation of the first kind (see \cite{Gr}) in our case. This can be achieved using  a regularization technique with convex optimization, as explained in Section 5. The simulations provided in that section also shows the efficiency of the method.

On the other hand, notice in Theorem \ref{MainSA1}, that we only assumed the fact that  $\mu_1\boxplus\mu_2=\mu_3$ in part (2), but (3) is satisfied as long as we are at least at distance $2\sqrt{2}\sigma_{1}$ from the real line. This has a nontrivial consequence in the aritmethic of free probability; adding a large enough Cauchy distribution to \emph{any} measure with finite variance automatically ensures the existence of a free deconvolution. More precisely, we have the following.

\begin{theorem}\label{MainSA2}
The function $\tilde{F}_{2}(z)=F_{\mu_{3}}\circ w_{3}(z+2\sqrt{2}\sigma_{1}i)$ is analytic on $\mathbb{C}^{+}$ and there exists a probability measure $\tilde{\mu}\in \mathcal{P}(\mathbb{R})$ such that $\tilde{F}_{2}=F_{\tilde{\mu}}$. Moreover $\tilde \mu_2$ satisfies that $$\mu_{1}\boxplus \tilde{\mu}=\mu_{3}\boxplus \mathcal{C}_{2\sqrt{2}\sigma_{1}},$$
where $\mathcal{C}_{2\sqrt{2}\sigma_{1}}$ denotes the Cauchy distribution with parameter $2\sqrt{2}\sigma_{1}$.
\end{theorem}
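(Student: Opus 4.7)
The plan is to first show that $\tilde F_2$ is the reciprocal Cauchy transform of some probability measure $\tilde\mu$, and then identify this measure by verifying, on a small region of $\mathbb C^-$ near $0$, an $R$-transform identity equivalent to $\mu_1\boxplus\tilde\mu=\mu_3\boxplus\mathcal C_{2\sqrt 2\sigma_1}$. I write $a=2\sqrt 2\sigma_1$ throughout.

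For the Nevanlinna part, analyticity and the self-mapping $\tilde F_2\colon\mathbb C^+\to\mathbb C^+$ are immediate from the chain $\mathbb C^+\xrightarrow{z\mapsto z+ai}\mathbb C_{a}\xrightarrow{\omega_3}\mathbb C^+\xrightarrow{F_{\mu_3}}\mathbb C^+$ using Theorem~\ref{MainSA1}(1). The growth $\tilde F_2(iy)/(iy)\to 1$ I would check by factoring
$$\frac{\tilde F_2(iy)}{iy}=\frac{F_{\mu_3}(\omega_3(i(y+a)))}{\omega_3(i(y+a))}\cdot\frac{\omega_3(i(y+a))}{i(y+a)}\cdot\frac{i(y+a)}{iy},$$
each factor tending to $1$ in turn by the standard $F_{\mu_3}(w)/w\to 1$ as $w\to i\infty$, by Theorem~\ref{MainSA1}(1), and trivially. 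The Akhiezer--Nevanlinna theorem then produces a unique probability measure $\tilde\mu$ with $F_{\tilde\mu}=\tilde F_2$.

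For the second part, the elementary identity $F_{\mathcal C_a}(z)=z+ai$ gives $R_{\mathcal C_a}\equiv -ai$, so $\mu_1\boxplus\tilde\mu=\mu_3\boxplus\mathcal C_a$ is equivalent to $K_{\tilde\mu}(w)=K_{\mu_3}(w)-R_{\mu_1}(w)-ai$ on a common neighborhood of $0$ in $\mathbb C^-$, where $K_\mu=G_\mu^{-1}$ and $R_\mu(w)=K_\mu(w)-1/w$; here $R_{\mu_1},R_{\mu_3}$ are analytic near $0$ by the finite-variance hypothesis. Since $G_{\tilde\mu}(z)=G_{\mu_3}(\omega_3(z+ai))$, inversion yields $K_{\tilde\mu}(w)=\omega_3^{-1}(K_{\mu_3}(w))-ai$, the inverse $\omega_3^{-1}$ being a local biholomorphism near $i\infty$ in view of $\omega_3(\zeta)/\zeta\to 1$. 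Setting $\zeta=\omega_3^{-1}(K_{\mu_3}(w))$ so that $w=G_{\mu_3}(\omega_3(\zeta))$, the required identity collapses to $\omega_3(\zeta)-\zeta=R_{\mu_1}(w)$. This follows from the two unconditional identities
$$\omega_1(\zeta)-\omega_3(\zeta)=F_{\mu_3}(\omega_3(\zeta))-\zeta,\qquad F_{\mu_1}(\omega_1(\zeta))=F_{\mu_3}(\omega_3(\zeta)),$$
the first being Theorem~\ref{MainSA1}(3) and the second a consequence of the fixed-point equation $T_z(\omega_3(z))=\omega_3(z)$ from part (4) combined with (3). Indeed the second gives $\omega_1(\zeta)=K_{\mu_1}(w)$, after which the first rearranges to $\omega_3(\zeta)-\zeta=\omega_1(\zeta)-F_{\mu_3}(\omega_3(\zeta))=K_{\mu_1}(w)-1/w=R_{\mu_1}(w)$, closing the loop.

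The main obstacle is the bookkeeping of domains in this last step: one must exhibit a single non-empty open set of $w\in\mathbb C^-$ on which all of $R_{\mu_1}$, $R_{\mu_3}$, $\omega_3^{-1}\circ K_{\mu_3}$, and hence $R_{\tilde\mu}$, are simultaneously defined and analytic. The finite-variance hypothesis handles $R_{\mu_1},R_{\mu_3}$; the asymptotic $\omega_3(\zeta)\sim\zeta$ at infinity, with the open mapping theorem, delivers injectivity and a local inverse of $\omega_3$ in a neighborhood of infinity in $\mathbb C_a$; and $K_{\mu_3}(w)$ lies in that neighborhood as soon as $w$ is small enough, since $K_{\mu_3}(w)\sim 1/w$ at $0$. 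Once the identity is established on such a non-empty open set, analytic continuation extends it, yielding the convolution identity on all of $\mathbb C^+$.
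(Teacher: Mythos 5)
Your proof is correct and takes essentially the same route as the paper: Nevanlinna for the existence of $\tilde\mu$, then the linearizing-transform identity together with $\phi_{\mathcal C_a}\equiv -ai$ (you write $R_{\mathcal C_a}\equiv -ai$). The only difference is cosmetic — you phrase the key step via $K_\mu=G_\mu^{-1}$ and $R_\mu$ and re-derive the relevant identities from Theorem~\ref{MainSA1}(3)--(4), whereas the paper works with $\phi_\mu=F_\mu^{<-1>}-\mathrm{id}$ and invokes Proposition~\ref{analyAddScal} directly; unwinding the change of variable $w=G_{\mu_3}(\omega_3(\zeta))$ shows your reduced identity $\omega_3(\zeta)-\zeta=R_{\mu_1}(w)$ is exactly the paper's $\phi_{\mu_3}-\phi_{\mu_1}=z-F_{\mu_3}(w_3(z))$.
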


	We also derive a similar theorem for free multiplicative convolutions which comes from the problem of deconvolving multiplication of random matrices.  That is, to reconstruct the distribution of $A$ from the distributions of $AB$ (or $A^{1/2}BA^{1/2}$) and $B$.  We call this operation \emph{free multiplicative deconvolution} ($\boxdiag$).
	
	  Interestingly, the free multiplicative deconvolution also appears in the following problem of wireless communication from \cite{OD2}. Consider $R_n$ and $X_n$ are independent random matrices of dimension $n\sim N$ where the entries of $X_n$ are independent complex Gaussian properly normalized. $R_n$ represents a vector containing the system characteristics and again $X_n$ is a noise which is amplified by an intensity $\sigma$. Thus we obtain a matrix $R_n+\sigma X_n$ from where we want to deconvolve $X_n$. Since in this case $R_n+\sigma X_n$ is not normal we consider the matrix
$$W_n=\frac{1}{N}(R_n+\sigma X_n)(R_n+\sigma X_n)^*,$$
and we are interested in the relation between the limiting distribution of $W_n$ and the limiting distribution of $R_n R_n^*$.  Ryan and Debbah proved that if the eigenvalue distribution of $R_n R_n^*$ converges, as $n\to\infty$, to a measure $\mu_R$, then $W_n$ coverges in distribution to a measure $\mu_W$ determined by the equation 
$$\mu_B=((\mu_R \boxdiag \mu_c)\boxplus\delta_{\sigma^2})\boxtimes\mu_c,$$
where $\mu_c$ denotes the Marchenko-Pastur distribution \cite{MP} with parameter $c$.

For this problem, our second theorem gives the following solution.
\begin{theorem}\label{ThmScalMultiplicatif}
Let $\mu_{1},\mu_{3}\in\mathcal{P}(\mathbb{R})$ be such that $\mu_{1}$ has a non-negative support and admits moments of order $4$ and such that $\mu_{3}$ admits moments of order $2$ and has a non-zero first moment. We suppose without loss of generality that the first moments of $\mu_{1}$ and $\mu_{3}$ are equal to one. Then, there exists $K>0$ and unique functions $\omega_{1},\omega_{3}:\mathbb{C}_{K}\to \mathbb{C}^{+}$ such that 

(1)The constant $K$ depends only on the respective variances $\sigma_{1}^{2}$ and $ \sigma_{3}^{2}$ of $\mu_{1}$ and $\mu_{3}$ and on the Jacobi coefficients $\beta_{1},\gamma_{1}$ of $\mu_{1}$, and we can choose 
$$K\leq \left[6\big(2\sigma_{1}^{2}+\sqrt{5\sigma_{1}^{4}+2\sigma_{3}^{2}\sigma_{1}^{2}}\big)\right]\vee \left[R+3/2\sqrt{R^{2}+4R\sigma_{3}^{2}}\right],$$
with $R=2\left(\sqrt{\gamma_{1}}\vee \beta_{1}\right)$.

(2) If $\mu_{2}$ is such that $\mu_{1}\boxtimes\mu_{2}=\mu_{3}$, then 
$$F_{\mu_{2}}(z)=F_{\mu_{3}}[w_{3}(z)]zw_{3}(z)^{-1}.$$

(3) Denote by $h_{1}(w)=w-F_{\mu_{1}}(w),\,\tilde{h}_{3}(w)=w^{-2}[w-F_{\mu_{3}}(w)] $ and $T_{z}(w)=zh_{1}\left(\tilde{h}_{3}(w)^{-1}z^{-1}\right)$. Then for $z\in \mathbb{C}_{K}$ and any $w$ in $D(z,\frac{\Im z}{5})$,  the iterated function $T_{z}^{\circ n}(w)$ converges to $\omega_{3}$.

(4) $\omega_1(z)=\frac{1}{z\tilde{h}_{3}(w_{3}(z))}$   for all $z\in \mathbb{C}_{K}$ .  
\end{theorem}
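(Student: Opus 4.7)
The plan is to mirror the proof of Theorem~\ref{MainSA1} and characterize $\omega_{3}(z)$ as the unique attracting fixed point of $T_{z}$ in a suitable disk; the other conclusions will then follow from (4) and a subordination identity read backwards. To motivate the form of $T_{z}$, I start from the Belinschi--Bercovici multiplicative subordination applied to the (hypothetical) decomposition $\mu_{1}\boxtimes\mu_{2}=\mu_{3}$ and rewrite it in $F$-coordinates after the substitution $z\mapsto 1/z$:
$$\frac{F_{\mu_{2}}(z)}{z}=\frac{F_{\mu_{3}}(\omega_{3}(z))}{\omega_{3}(z)}=\frac{F_{\mu_{1}}(\omega_{1}(z))}{\omega_{1}(z)},\qquad \omega_{1}(z)\,\tilde h_{3}(\omega_{3}(z))=\frac{1}{z}.$$
The second identity is exactly (4), and eliminating $\omega_{1}$ via $\omega_{1}=\tilde h_{3}(\omega_{3})^{-1}z^{-1}$ turns the first into $\omega_{3}=zh_{1}(\omega_{1})=T_{z}(\omega_{3})$. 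Thus the entire theorem reduces to showing that $T_{z}$ has a unique attracting fixed point in $D(z,\Im z/5)$ whenever $\Im z>K$.

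\textbf{Analytic inputs.}
Since $\mu_{1}$ and $\mu_{3}$ have mean one and finite variance, $h_{j}(w)=w-F_{\mu_{j}}(w)$ admits the Nevanlinna representation
$$h_{j}(w)=1+\int_{\mathbb{R}}\frac{d\rho_{j}(t)}{t-w},\qquad \rho_{j}(\mathbb{R})=\sigma_{j}^{2},\quad j=1,3,$$
from which $|h_{j}(w)-1|\leq \sigma_{j}^{2}/\Im w$ and $|h_{j}'(w)|\leq \sigma_{j}^{2}/(\Im w)^{2}$ and, by direct differentiation, similar bounds for $\tilde h_{3}(w)=h_{3}(w)/w^{2}$ and $\tilde h_{3}'(w)$. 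A sharper control of $h_{1}$ valid in a larger region comes from the Jacobi continued fraction $h_{1}(w)=\beta_{1}+\gamma_{1}/F_{\hat\mu_{1}}(w)$, whose shifted measure $\hat\mu_{1}$ has finite variance thanks to the fourth-moment hypothesis on $\mu_{1}$; this forces $F_{\hat\mu_{1}}$ to omit a disk of explicit radius controlled by $R=2(\sqrt{\gamma_{1}}\vee\beta_{1})$.

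\textbf{Contraction estimate and conclusion.}
The core technical step is to prove that for $\Im z>K$ the map $T_{z}$ sends $\Delta=D(z,\Im z/5)$ into itself and is a strict contraction there. For $w\in\Delta$ one has $\Im w\geq 4\Im z/5$, so the argument $u(w)=\tilde h_{3}(w)^{-1}z^{-1}$ of $h_{1}$ stays close to $w^{2}/z\approx z$; this controls $|T_{z}(w)-z|$ and yields invariance of $\Delta$ once $\Im z$ is large enough in terms of $\sigma_{1},\sigma_{3}$ (and $R$). For the contraction, differentiating gives
$$T_{z}'(w)=-h_{1}'(u(w))\,\frac{\tilde h_{3}'(w)}{\tilde h_{3}(w)^{2}},$$
and combining the asymptotic $\tilde h_{3}'(w)/\tilde h_{3}(w)^{2}\approx -2w$ with either estimate on $h_{1}'$ produces a bound $|T_{z}'(w)|\leq C/\Im z$; requiring this strictly below one yields the explicit threshold. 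The two terms in the displayed maximum appear because the invariance and contraction conditions each impose a lower bound on $\Im z$, one cleanly obtained from the moment bound (first term, involving $\sigma_{1},\sigma_{3}$) and the other from the Jacobi-based refinement (second term, involving $R,\sigma_{3}$), so that $K$ must exceed both. Once contraction is established, Banach's theorem produces the unique fixed point $\omega_{3}(z)\in\Delta\subset\mathbb{C}^{+}$ together with the iteration statement (3) and analyticity in $z$ via uniform contraction; defining $\omega_{1}$ by (4) and reading the subordination identity backwards recovers (2). The main obstacle will be chasing the constants in the contraction estimate cleanly enough that the two components of the stated bound on $K$ come out exactly as displayed rather than with inflated factors.
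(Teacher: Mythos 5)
Your overall architecture is correct: characterize $\omega_{3}$ as the unique attracting fixed point of $T_{z}$, then read the multiplicative subordination identity backwards through $\omega_{1}=\tilde h_{3}(\omega_{3})^{-1}z^{-1}$ to obtain (2) and (4). This is also what the paper does (Propositions \ref{fixedPointMultiplicatif} and \ref{finalStepMultiplicatif}). However, your technical route to the fixed point diverges from the paper's and, as written, has a genuine quantitative gap.

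The paper does not prove a Euclidean contraction via a bound on $T_{z}'$. It reparameterizes to $\tilde w=w/z-1$ and works on the disk $D(0,r_{z})$ with $r_{z}=\Im z/(5|z|)$, proving the strict set inclusion $T_{z}(D(0,r_{z}))\subset rD(0,r_{z})$ for some $r<1$ by bounding $|h_{1}(\delta)-1|$ directly from the Jacobi continued fraction, using Hasebe's positivity lemma (positive support of $\mu_{1}$ forces $\beta_{1}\geq 0$ and positive support of the residual measure $\nu$, so $|G_{\nu}(\delta)|\leq 1/d(\delta,[0,\infty))$). Convergence of iterates then follows from Denjoy--Wolff, with no derivative control needed. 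Your plan instead bounds $|T_{z}'(w)|$ and invokes Banach. But the quantity you claim to be $O(1/\Im z)$ is in fact
$$|T_{z}'(w)|=|h_{1}'(u(w))|\cdot\Bigl|\tfrac{\tilde h_{3}'(w)}{\tilde h_{3}(w)^{2}}\Bigr|\approx\frac{\sigma_{1}^{2}}{(\Im u(w))^{2}}\cdot 2|w|\approx\frac{2\sigma_{1}^{2}|z|}{(\Im z)^{2}},$$
and this is unbounded for fixed $\Im z$ as $|\Re z|\to\infty$. So your Euclidean contraction estimate fails qualitatively, not merely up to inflated constants; making the argument correct essentially forces you back to the hyperbolic/Denjoy--Wolff picture (Schwarz--Pick turns the strict containment into a hyperbolic contraction) or to the rescaled variable $\tilde w$, in which the relevant radius already carries the compensating factor $\Im z/|z|$. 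You should also note that your claimed expansion $h_{1}(w)=\beta_{1}+\gamma_{1}/F_{\hat\mu_{1}}(w)$ has the indices off by one: with the mean-one normalization the relevant identity is $h_{1}(w)-1=\sigma_{1}^{2}/(w-\beta_{1}-\gamma_{1}G_{\nu}(w))$, and the positivity of $\beta_{1}$ and of the support of $\nu$ (Hasebe's lemma) is exactly what permits replacing $1/\Im\delta$ by the sharper $1/d(\delta,[0,\infty))$ and hence obtaining the term in $R$ in the constant $K$.
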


Finally, in the third part of the paper, in view of broader theoretical and practical applications,  we also consider operator valued versions of the above theorems, see Section 2.4 for definitions. 

\begin{theorem}\label{deconvolutionAdditiveCase}
Suppose that $\mu_{1}\boxplus\mu_{2}=\mu_{3}$, with $\mu_{1},\mu_{2}$ and $\mu_{3}$ bounded $B$-valued distributions, and let $\sigma_{1}^{2}=\Vert \mu_{1}(X^{2})-\mu_{1}(X)^{2}\Vert$ be the variance of $\mu_{1}$. For $b\in B$ such that $\Im b>4\sqrt{2}\sigma_{1}$, set $\Delta_{b}=\lbrace r\in B^{+},\Im r>3\Im b/4\rbrace$ and define $T_{b}:\Delta_{b}\to B$ to be 
$$T_{b}(w)=h_{\mu_{1}}\left(h_{\mu_{3}}(w)+2w-b\right)+b.$$

Then, $T_{b}$ is well-defined and for any $w\in \Delta_{b}$, the sequence $T_{b}^{\circ n}(w)$ converges to an element $w_{3}(b)$ independent of the initial choice of $w$. Moreover,
$$F_{\mu_{2}}(b)=F_{\mu_{3}}(w_{3}(b)).$$
\end{theorem}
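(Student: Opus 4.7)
The plan is to treat this statement as the operator-valued counterpart of Theorem~\ref{MainSA1}(4), so the iteration $T_{b}$ should be derived by inverting the operator-valued subordination $\omega_{2}$ associated with $\mu_{1}\boxplus\mu_{2}=\mu_{3}$ (in the sense of Belinschi, Mai and Speicher). Concretely, one has $\omega_{2}:B^{+}\to B^{+}$ satisfying $F_{\mu_{2}}(\omega_{2}(c))=F_{\mu_{3}}(c)$ and a fixed-point equation of the form $\omega_{2}(c)=h_{\mu_{1}}(h_{\mu_{2}}(\omega_{2}(c))+c)+c$. Setting $\omega_{2}(c)=b$ with $c=:w_{3}(b)$, using $h_{\mu_{2}}(b)=F_{\mu_{2}}(b)-b=F_{\mu_{3}}(w_{3}(b))-b$ together with the identity $h_{\mu_{3}}(w)+2w=F_{\mu_{3}}(w)+w$, produces exactly the equation $w=T_{b}(w)$. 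Thus, once the fixed point has been located and shown to be unique, the identity $F_{\mu_{2}}(b)=F_{\mu_{3}}(w_{3}(b))$ will be automatic from reversing this substitution.

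To carry this out I would first check that $T_{b}$ is well-defined and stabilizes $\Delta_{b}$. For $w\in\Delta_{b}$, the Nevanlinna--Pick inequality $\Im F_{\mu_{3}}(w)\geq\Im w$ yields $\Im(h_{\mu_{3}}(w)+2w-b)\geq 2\Im w-\Im b>\Im b/2>2\sqrt{2}\sigma_{1}$, so $h_{\mu_{1}}$ is defined and analytic at this argument. The operator moment bound $\|h_{\mu_{1}}(r)\|\leq\sigma_{1}^{2}\|(\Im r)^{-1}\|$ then keeps $\Im T_{b}(w)$ above the threshold $3\Im b/4$, giving $T_{b}(\Delta_{b})\subset\Delta_{b}$.

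I would then prove a strict contraction in operator norm. Differentiating, $dT_{b}(w)[v]=dh_{\mu_{1}}(r)\bigl[dh_{\mu_{3}}(w)[v]+2v\bigr]$ with $r=h_{\mu_{3}}(w)+2w-b$; the main quantitative input is the $B$-valued derivative estimate $\|dh_{\mu}(r)\|\leq\sigma^{2}\|(\Im r)^{-1}\|^{2}$, obtained by differentiating the Nevanlinna representation of $F_{\mu}$ and dominating by the variance $\|\mu(X^{2})-\mu(X)^{2}\|$. Applied to $\mu_{1}$ at $r$ with $\Im r>2\sqrt{2}\sigma_{1}$ this gives $\|dh_{\mu_{1}}(r)\|\leq 1/8$, and combined with a uniform bound on $\|dh_{\mu_{3}}(w)+2\|$ across $\Delta_{b}$, the Jacobian has norm strictly less than one. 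Banach's fixed-point theorem then produces the unique $w_{3}(b)\in\Delta_{b}$ with $T_{b}^{\circ n}(w)\to w_{3}(b)$ for every $w\in\Delta_{b}$, and the first paragraph identifies $\omega_{2}(w_{3}(b))=b$, so $F_{\mu_{2}}(b)=F_{\mu_{3}}(w_{3}(b))$.

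The main difficulty lies in the operator-norm derivative estimate, which replaces the scalar Schwarz--Pick inequality. One has to work with $\Im b$ as a positive element of $B$, track its non-commutativity with $b$, and treat the derivative as an operator on $B$ rather than as a scalar. The extra care needed to control the full Jacobian $dh_{\mu_{1}}(r)\circ(dh_{\mu_{3}}(w)+2)$ --- rather than a single scalar factor as in the one-dimensional setting --- is what forces the threshold $4\sqrt{2}\sigma_{1}$ in place of the scalar $2\sqrt{2}\sigma_{1}$.
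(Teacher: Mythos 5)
Your outline diverges from the paper at two load-bearing points, and each contains a genuine gap. The bound $\|h_{\mu_1}(r)\|\leq\sigma_1^2\|(\Im r)^{-1}\|$ that you use in the stability step is the scalar inequality \eqref{ineg_h_mu}; it does not carry over verbatim to the operator-valued setting. The paper's Lemma \ref{general_bound_Hmu} shows that the correct estimate is $\|h_{\mu_1}(r)-\mu_1(X)\|\leq 4\sigma_1^2/\sigma_{\inf}(\Im r)$, with the extra factor of $4$ arising from writing an arbitrary functional on $B$ as a sum of four positive ones. That factor of $4$ is exactly what inflates the scalar threshold $2\sqrt{2}\sigma_1$ to $4\sqrt{2}\sigma_1$ here; your final paragraph, which attributes the larger constant to controlling the full Jacobian, misidentifies the source. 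Run your own computation with the bound you wrote down and you get the threshold $2\sqrt{2}\sigma_1$, contradicting the statement.

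The second gap is structural. You propose to close the argument via an explicit operator-norm contraction and Banach's fixed-point theorem, which requires bounding $\|dT_b(w)\|=\|dh_{\mu_1}(r)\circ(dh_{\mu_3}(w)+2\,\Id)\|$ strictly below $1$, and you invoke a uniform bound on $\|dh_{\mu_3}(w)+2\,\Id\|$ on $\Delta_b$ without justification. Since $dh_{\mu_3}(w)=\Id-dF_{\mu_3}(w)$, such a bound would need to control $\|dF_{\mu_3}(w)\|$, which depends on the variance of $\mu_3$ and on $\Im w$, not on $\sigma_1$; there is no reason for the resulting product to fall below $1$ on all of $\Delta_b$. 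The paper avoids this entirely by using the Earle--Hamilton theorem: Proposition \ref{suborAddOp} only needs to show that $T_b(\Delta_b)$ is bounded in norm (this follows from the $h_{\mu_1}$-bound) and sits at a positive distance from $\partial\Delta_b$ (proved by a functional-separation argument), and no Lipschitz constant is estimated. Finally, the concluding identity $F_{\mu_2}(b)=F_{\mu_3}(w_3(b))$ is not automatic from reversing a substitution in the BMS subordination; the paper first proves Gateaux analyticity of $w_3$ by a Montel argument, then checks the $\phi$-transform identity for $b$ large and extends by analytic continuation, and working from $\omega_2$ on $B^+$ would additionally require showing $b$ lies in the range of $\omega_2$, which is not obvious.
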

In the multiplicative case, let us first introduce some notations. Given $\mu_{1},\mu_{3}$ two bounded $B$-valued distributions,
\begin{itemize}
\item $R_{i}$ is the bound of the distribution $\mu_{i}$,
\item $\alpha_{i}:=\Vert \mu_{i}(\mathcal{X})\Vert$ is the norm of the first moment of $\mu_{i}$, and
\item $\alpha_{i}^{*}:=\inf\Spec \mu_{i}(\mathcal{X})$ is the minimum of the spectrum of $\mu_{i}(\mathcal{X})$.
\end{itemize}
Then the result is the following.
\begin{theorem}\label{deconvolutionMultiplicativeCase}
Suppose that $\mu_{1}\boxtimes\mu_{2}=\mu_{3}$, with $\mu_{1}\geq 0$. Set 
\begin{itemize}
\item $K:=\frac{2}{\alpha_{\mu_{1}}^{*}}\max\left(\frac{2}{\alpha_{\mu_{1}}^{*}}(\sigma_{3}+\alpha_{\mu_{3}})\left(\Vert \mu_{1}\Vert+2\frac{\sigma_{\mu_{1}}^{2}}{\alpha_{\mu_{1}}^{*}}\right),\Vert \mu_{3}\Vert+\sigma_{\mu_{3}}\right)$,
\item for $b$ invertible such that $\Vert b\Vert \leq K^{-1}$, set $\Delta_{b}=bD(0,\frac{2}{\alpha_{\mu_{1}}^{*}})$ and define $T_{b}:\Delta_{b}\to B$ by 
$$T_{b}(w)=bH_{\mu_{1}}\left(b^{-1}wH_{\mu_{3}}(w)w\right)^{-1},$$
where $H_{\mu}(b)=h_{\mu}(b^{-1})$. 
\end{itemize}
Then, for any $b$ such that $\Vert b^{-1}\Vert\leq K$, $T_{b^{-1}}$ is well-defined, and for any $w\in \Delta_{b^{-1}}$ the sequence $T_{b^{-1}}^{\circ n}(w)$ converges to an element $w_{3}(b)\in B$ independent of the initial choice of $w$. Moreover,
$$F_{\mu_{2}}(b)=bw_{3}(b)F_{\mu_{3}}(w_{3}(b)^{-1}).$$
\end{theorem}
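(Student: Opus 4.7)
The plan is to mirror the additive operator-valued deconvolution (Theorem \ref{deconvolutionAdditiveCase}) on the multiplicative side, by turning the subordination identity into a fixed-point equation on a closed ball in $B$ and applying the Banach contraction principle. Because $\mu_{1}\geq 0$, the spectral lower bound $\alpha_{\mu_{1}}^{*}>0$ plays the role that the half-plane geometry played in the additive case: it provides uniform invertibility bounds for $H_{\mu_{1}}$ in a neighbourhood of the origin.

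First, I would set up the local analytic framework. Since $F_{\mu_{i}}(z)=z-\mu_{i}(\mathcal{X})+O(z^{-1})$ at infinity, the substitution $H_{\mu_{i}}(w)=h_{\mu_{i}}(w^{-1})$ yields an expansion $H_{\mu_{i}}(w)=\mu_{i}(\mathcal{X})+\sigma_{\mu_{i}}^{2}w+O(w^{2})$ near $0$, with a quadratic remainder controlled by higher moments of $\mu_{i}$; this is where the fourth-moment hypothesis on $\mu_{1}$ enters, through $\Vert \mu_{1}\Vert$. For $\mu_{1}$, the operator $H_{\mu_{1}}(x)$ stays close to $\mu_{1}(\mathcal{X})$ on a small ball around $0$, so a Neumann expansion gives $\Vert H_{\mu_{1}}(x)^{-1}\Vert \leq 2/\alpha_{\mu_{1}}^{*}$ there. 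Combined with Cauchy's integral formula, these estimates yield matching norm bounds for the Fr\'echet derivatives $DH_{\mu_{i}}$.

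Next, I would verify the three conditions needed for the contraction argument on $\Delta_{b^{-1}}=b^{-1}D(0,2/\alpha_{\mu_{1}}^{*})$. \emph{Well-definedness:} for $w\in\Delta_{b^{-1}}$ the argument of $H_{\mu_{1}}$ inside $T_{b^{-1}}$ must lie in the small-norm region where that map is invertible; since $w$ factors as $b^{-1}y$ with $\Vert y\Vert\leq 2/\alpha_{\mu_{1}}^{*}$, this argument reduces to $yH_{\mu_{3}}(w)w$, and the bound $\Vert H_{\mu_{3}}(w)\Vert \leq \alpha_{\mu_{3}}+\sigma_{\mu_{3}}\Vert w\Vert$ explains the $(\sigma_{\mu_{3}}+\alpha_{\mu_{3}})$ factor in $K$. \emph{Stability:} writing $T_{b^{-1}}(w)-b^{-1}=b^{-1}[H_{\mu_{1}}(\,\cdot\,)^{-1}-\mu_{1}(\mathcal{X})^{-1}]$ and using a Neumann-series bound produces an estimate of magnitude at most $2/\alpha_{\mu_{1}}^{*}$, ensuring $T_{b^{-1}}(\Delta_{b^{-1}})\subset \Delta_{b^{-1}}$. \emph{Contraction:} the Fr\'echet derivative $DT_{b^{-1}}$ decomposes into a product of $DH_{\mu_{1}}$, $DH_{\mu_{3}}$ and algebraic left/right multiplication operators, each carrying a factor of $\Vert b^{-1}\Vert$; the explicit threshold $K$ is tuned precisely so that $\Vert DT_{b^{-1}}\Vert<1$ uniformly on $\Delta_{b^{-1}}$.

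Banach's theorem then delivers the unique fixed point $w_{3}(b)\in\Delta_{b^{-1}}$ together with the convergence claim. To identify $w_{3}(b)$ with the multiplicative subordination, I would unwind the fixed-point relation in terms of $h_{\mu_{1}}$ and $h_{\mu_{3}}$ and match it against the operator-valued analogue of the Voiculescu subordination for $\mu_{1}\boxtimes\mu_{2}=\mu_{3}$; the final identity $F_{\mu_{2}}(b)=bw_{3}(b)F_{\mu_{3}}(w_{3}(b)^{-1})$ then drops out after one algebraic manipulation. The main obstacle is the contraction step: in contrast with the scalar case, both the noncommutative product $wH_{\mu_{3}}(w)w$ and the inverse $H_{\mu_{1}}(\,\cdot\,)^{-1}$ contribute nontrivial left/right multiplication structure to the derivative, so the contraction constant cannot be read off as a scalar magnitude. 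Obtaining a sharp enough bound to justify the stated value of $K$ requires coupling the Neumann-series estimate for $H_{\mu_{1}}(\,\cdot\,)^{-1}$ with the a priori control $\Vert w-b^{-1}\Vert\leq 2\Vert b^{-1}\Vert/\alpha_{\mu_{1}}^{*}$ throughout $\Delta_{b^{-1}}$.
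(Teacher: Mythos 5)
Your proposal takes a genuinely different route at the fixed-point step, and I believe there is a real gap there. The paper does not use the Banach contraction principle; it uses the Earle--Hamilton theorem, whose hypotheses are only (i) the image $T_{b}(\Delta_{b})$ is bounded and (ii) the distance from $T_{b}(\Delta_{b})$ to the exterior of $\Delta_{b}$ is bounded below. Concretely, the paper shows $T_{b}(\Delta_{b})\subset \tilde{\Delta}_{b}:=bD\bigl(0,\tfrac{2}{(2-\kappa)\alpha_{1}^{*}}\bigr)\subsetneq \Delta_{b}$ when $\Vert b\Vert=\kappa K^{-1}$ with $\kappa<1$, and then bounds $d(\tilde{\Delta}_{b},\partial\Delta_{b})\geq \tfrac{2(1-\kappa)}{(2-\kappa)\alpha_{1}^{*}\Vert b^{-1}\Vert}>0$. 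This is strictly weaker than the norm-contraction condition $\Vert DT_{b^{-1}}\Vert<1$ that you assert. For a holomorphic self-map whose image is compactly contained in its domain, the Schwarz--Pick/Kobayashi contraction is automatic, but a \emph{norm} contraction is not; the constant you would extract from a Cauchy estimate on $DT_{b^{-1}}$ depends on how far the evaluation point is from $\partial\Delta_{b^{-1}}$, and near the boundary it need not be below $1$. You say the ``explicit threshold $K$ is tuned precisely so that $\Vert DT_{b^{-1}}\Vert<1$'', but the paper's $K$ was not tuned for that criterion and you do not verify it. As it stands, the contraction step is a gap: either you must restrict to a compactly contained closed ball and re-derive a (likely worse) constant, or you must switch to Earle--Hamilton as the paper does.

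A second, smaller gap is the identification of the fixed point. Saying the final identity ``drops out after one algebraic manipulation'' hides what actually has to happen: one must invoke the \emph{forward} operator-valued multiplicative subordination of Belinschi--Speicher--Treilhard--Vargas (\cite[Theorem 2.2]{BSTV}) to produce $w_{2}$ with $\eta_{\mu_{3}}=\eta_{\mu_{2}}\circ w_{2}$, then check that $w(b)=w_{2}^{\langle -1\rangle}(b)$ both lies in $\Delta_{b}$ \emph{and} satisfies $T_{b}(w(b))=w(b)$ for $b$ small, so that uniqueness of the fixed point gives $w(b)=w_{3}(b)$ there. Finally one needs Gateaux holomorphicity of $b\mapsto w_{3}(b)$ (the paper gets this via uniform boundedness of the iterates and Montel's theorem) to extend $\eta_{\mu_{2}}=\eta_{\mu_{3}}\circ w_{3}$ from small $b$ to all of the domain before converting to the $F$-transform statement. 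Your sketch does not address the domain issue ($w(b)\in\Delta_{b}$), the analytic-continuation step, or the appeal to \cite{BSTV}, all of which are load-bearing.
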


Apart from this introduction the paper is divided in four sections. In Section 2 we present the preliminaries on transforms, free convolutions and fixed point theorems needed in the proofs. Also in 2.4 we give background on operator valued free probability, including three lemmas where we give estimates for the operator valued transforms.  In Section 3 we deal with the scalar case. That is, we prove Theorems \ref{MainSA1} and \ref{MainSA2}. Next, n Section 4 we consider the operator valued case, i.e. Theorems  \ref{deconvolutionMultiplicativeCase} and \ref{deconvolutionAdditiveCase}. Finally, Section 5 gives simulations of the deconvolution procedure in the scalar case applied to random matrices.
\section{Preliminaries}

\subsection{Transforms}\label{scalarTransform}

We denote by $\mathcal{P}_{2}(\mathbb{R})$ the set of probability measures on $\mathbb{R}$ having a finite second moment ($\int t^2 d\mu(t)<\infty$) and by $\mathcal{P}_{\infty}$ the set of probability measure with bounded support.
If $\sigma\in \mathbb{R}$ denote by $\mathbb{C}_{\sigma}$ the upper-plane $\mathbb{C}_{\sigma}:=\{ \Im z>\sigma\}$. 

For $\mu\in\mathcal{P}_{2}(\mathbb{R})$, let $G_{\mu}:\mathbb{C}^{+}\to \mathbb{C}^{-}$ denote its Cauchy transform, defined by $$G_{\mu}(z)=\int_{\mathbb{R}}\frac{1}{z-t}d\mu(t).$$
Stieltjes inversion formula allows us to recover a measure from its Cauchy transform as follows,
\begin{equation}\label{Stieltjes Inversion} \mu([a,b]) -\frac{1}{\pi} \lim _{y\downarrow 0}\int_a^b\Im\left[G_\mu(x+i y) \right] dx. \end{equation}

We will use the properties of its reciprocal $F_{\mu}:\mathbb{C}^{+}\to \mathbb{C}^{+}$ defined by $F_{\mu}(z)=\frac{1}{G_{\mu}(z)}$. It is well known that \begin{equation}\label{ImF}  \Im(F_{\mu}(z))\geq \Im z,\quad z\in\mathbb{C}^{+}. \end{equation}

Let $\mu$ be a probability measure with $2n+2$-moments, that is $\int_{\mathbb{R}}x^{2n+2}\mu (dx)<\infty$. Then the Cauchy transform can be expressed in the form
\begin{equation}\label{expansionjacobi}
G_{\mu }(z)=\frac{1}{z-\beta
_{0}-\dfrac{\gamma _{0}}{z-\beta _{1}-\dfrac{\gamma _{1}}{~~~\dfrac{\ddots}{ z-\beta _{n}-
\gamma _{n}G_{\nu }(z)}}}}
\end{equation}
where $\nu$ is a probability measure.   The sequences  $\gamma _{m}=\gamma _{m}(\mu )\geq 0,\beta
_{m}=\beta _{m}(\mu )\in\mathbb{R}$ are called the Jacobi parameters of $\mu$.
Notice that \eqref{expansionjacobi} at $n=0$ gives $F_{\mu}(z)=z-\beta_{0}-\gamma_0G_\nu$ for some probability measure $\nu$, so that applying \eqref{ImF} to $G_{\nu}$ yields 
\begin{equation}\label{ineg_h_mu}
|h_{\mu}(z)-\beta_{0}|\leq\gamma_0/\Im z,
\end{equation}
where $h_{\mu}(z)=z-F_{\mu}(z)$. The latter inequality plays an important role in our proof. In particular, if $\beta_0=0$ then $|h_{\mu}(z)|\leq\gamma_0/\Im z$.

Moreover, as a consequence of the analyticity of $G_{\mu}$ outside of the support of $\mu$, Hasebe \cite[Lemma 4.1]{Ha} proved that if $\mu$ has a positive support and admits enough moments to get the expansion \eqref{expansionjacobi}, then $\nu$ has also a positive support and each coefficient $\beta_{m}$ is non-negative.

\subsection{Free convolutions}\label{convolutions}

Free additive convolution was defined by Voiculescu in \cite{Voi85} for compactly supported probability measures and later generalized by Maassen \cite{Maa} for measures in $\mathcal{P}_{2}(\mathbb{R})$ and in \cite{BV93} for general probability measures.
Here we will use the analytical definition from \cite{Maa} via Voiculescu's transfom $\phi_{\mu}$. For this we need the following lemma.

\begin{lemma}\cite[Lemma 2.4] {Maa} Let $\mu$ be a probability measure on $\mathbb{R}$ with mean $0$, variance
$\sigma^2$, and reciprocal Cauchy transform $F$. Then the restriction of $F$ to $\mathbb{C}_\sigma$ takes
every value in $\mathbb{C}_{2\sigma}$, precisely once. The inverse function $F^{<-1>}: \mathbb{C}_{2\sigma} \to \mathbb{C}_\sigma$ thus
defined satisfies $$|F^{<-1>}(u)-u |<\frac{2\sigma^2}{\Im u}.$$
\end{lemma}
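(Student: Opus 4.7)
The plan is to exploit the decomposition $F_{\mu}(z) = z - h_{\mu}(z)$, where $h_{\mu}$ is a small perturbation of the identity on $\mathbb{C}_{\sigma}$. From the Jacobi expansion \eqref{expansionjacobi} at $n=0$ applied with $\beta_{0}=0$ (mean zero) and $\gamma_{0}=\sigma^{2}$ (variance), one gets $h_{\mu}(z) = \sigma^{2}G_{\nu}(z)$ for some probability measure $\nu$, which supplies both the integral representation needed for a Lipschitz estimate and the size bound $|h_{\mu}(z)|\le \sigma^{2}/\Im z$ already recorded in \eqref{ineg_h_mu}. Injectivity and surjectivity onto $\mathbb{C}_{2\sigma}$ will then follow from a Banach fixed-point argument, and the quantitative bound on $F^{<-1>}$ will be a short squeeze at the end.

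For injectivity, I would use $h_{\mu}(z_{1})-h_{\mu}(z_{2})=\sigma^{2}\int_{\mathbb{R}}\frac{z_{2}-z_{1}}{(z_{1}-t)(z_{2}-t)}\,d\nu(t)$ to obtain
\[
|h_{\mu}(z_{1})-h_{\mu}(z_{2})|\;\le\;\frac{\sigma^{2}}{\Im z_{1}\,\Im z_{2}}\,|z_{1}-z_{2}|,
\]
so that for $z_{1},z_{2}\in\mathbb{C}_{\sigma}$ the prefactor is strictly less than $1$. If $F_{\mu}(z_{1})=F_{\mu}(z_{2})$, then $z_{1}-z_{2}=h_{\mu}(z_{1})-h_{\mu}(z_{2})$ would give $|z_{1}-z_{2}|<|z_{1}-z_{2}|$, forcing $z_{1}=z_{2}$.

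For surjectivity, I fix $u\in\mathbb{C}_{2\sigma}$ and study $\phi_{u}(z)=u+h_{\mu}(z)$, whose fixed points in $\mathbb{C}_{\sigma}$ are precisely the preimages of $u$ under $F_{\mu}$. Any $z\in\overline{D(u,\sigma)}$ satisfies $\Im z\ge \Im u-\sigma>\sigma$, hence $|h_{\mu}(z)|<\sigma$, so $\phi_{u}$ sends $\overline{D(u,\sigma)}$ into itself. The Lipschitz bound above gives contraction constant $\sigma^{2}/(\Im u-\sigma)^{2}$, which is strictly less than $1$ precisely because $\Im u>2\sigma$. Banach's theorem then produces a unique fixed point $z^{*}\in \overline{D(u,\sigma)}\subset\mathbb{C}_{\sigma}$, and uniqueness on all of $\mathbb{C}_{\sigma}$ follows from the injectivity step.

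The quantitative estimate on $F^{<-1>}$ is where the only real care is needed, and I expect this to be the main subtlety. Setting $z=F^{<-1>}(u)$, we have $z-u=h_{\mu}(z)$ and $\Im u=\Im F_{\mu}(z)\ge \Im z$ by \eqref{ImF}; combined with $|\Im z-\Im u|\le\sigma^{2}/\Im z$ this gives $\Im z(\Im u-\Im z)\le\sigma^{2}$. If $\Im z\le \Im u/2$, then $\Im u-\Im z\ge \Im u/2$ would force $\Im z\le 2\sigma^{2}/\Im u<\sigma$, contradicting $z\in\mathbb{C}_{\sigma}$. Hence $\Im z>\Im u/2$ and
\[
|F^{<-1>}(u)-u|=|h_{\mu}(z)|\;\le\; \frac{\sigma^{2}}{\Im z}\;<\;\frac{2\sigma^{2}}{\Im u}.
\]
This last squeeze is the only step where the hypothesis $\Im u>2\sigma$ on the codomain and the containment in $\mathbb{C}_{\sigma}$ of the domain conspire to halve the imaginary part, and it is what dictates the constant $2$ in the final inequality.
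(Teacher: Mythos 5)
The paper does not prove this lemma; it is cited verbatim as Maassen's Lemma 2.4 and then used to define $\phi_\mu$. So there is no in-paper argument to compare against, and I will simply assess your proof on its own terms.

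Your argument is correct and complete. The Jacobi expansion \eqref{expansionjacobi} at $n=0$ with $\beta_0=0$ (mean zero) and $\gamma_0=\sigma^2$ (variance) does give $h_\mu(z)=\sigma^2 G_\nu(z)$, which delivers both $|h_\mu(z)|\leq \sigma^2/\Im z$ and the Lipschitz bound $|h_\mu(z_1)-h_\mu(z_2)|\leq \sigma^2|z_1-z_2|/(\Im z_1\,\Im z_2)$. On $\mathbb{C}_\sigma$ the Lipschitz constant is $<1$, which settles injectivity. For surjectivity, the verification that $\phi_u(z)=u+h_\mu(z)$ maps $\overline{D(u,\sigma)}$ into itself and is a strict contraction with constant $\sigma^2/(\Im u-\sigma)^2<1$ is sound precisely because $\Im u>2\sigma$, and $\overline{D(u,\sigma)}\subset\mathbb{C}_\sigma$ so the Banach fixed point lies in the correct domain. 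The final estimate is the only delicate point and you handle it correctly: from $\Im u\geq \Im z$ and $\Im z(\Im u-\Im z)\leq\sigma^2$, the alternative $\Im z\leq\Im u/2$ would force $\Im z<\sigma$, which is excluded, so $\Im z>\Im u/2$ and $|h_\mu(z)|\leq\sigma^2/\Im z<2\sigma^2/\Im u$, with strict inequality as claimed. This is essentially Maassen's own route: his Proposition 2.2 furnishes the representation $F(z)=z-\sigma^2 G_\nu(z)$, and the proof of his Lemma 2.4 then proceeds by exploiting exactly the estimates you use. Your write-up makes the contraction constant and the $\Im u/2$ dichotomy explicit, which is a good pedagogical choice.
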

Thus one defines Voiculescu's transfom, $\phi_{\mu}:\mathbb{C}_{2\sigma}\to \mathbb{C}_\sigma$, by the formula $\phi_{\mu}(z)=F_{\mu}^{<-1>}(z)-z$.
The \emph{free additive convolution} of two probability measures $\mu_1,\mu_2\in\mathcal{P}_2(\mathbb{R})$ with variance $\sigma_1^2$ and $\sigma_2^2$  is the
unique probability measure $\mu_3=\mu_1\boxplus\mu_2$ on $\mathbb{R}$ such that 
$\phi_{\mu_{3}}=\phi_{\mu_{1}}+\phi_{\mu_{2}}$ on $\mathbb{C}_{2\sigma_3}$ with $\sigma_3=\sqrt{ \sigma_1^2+\sigma_2^2}$ and it is denoted by $\mu_1\boxplus\mu_{2}$. 

For the multiplicative version of free convolution we use the transform $\Sigma_{\mu}:\Omega^{+}_{\mu}\to \mathbb{C}$, where $\Omega^{+}_{\mu}$ is a neighborhood of $0$ in $\mathbb{C}^{+}$, as $\Sigma_{\mu}(z)=\eta_{\mu}^{<-1>}(z)/z$, which is well defined as long as the first moment of $\mu$ is not 0.

Thus, for $\mu_{1},\mu_{2}\in\mathbb{P}_{2}(\mathbb{R})$ such that $\mu_{1}$ is supported on the positive real line and $\mu_{2}$ has non-zero first moment, the \emph{free multiplicative convolution} of $\mu_{1}$ and $\mu_{2}$  
is the unique probability measure $\mu_3$ such that $\Sigma_{\mu_{1}}\Sigma_{\mu_{2}}=\Sigma_{\mu_{3}}$ on $\Omega^{+}_{\mu_{1}}\cap\Omega^{+}_{\mu_{2}}$. In this case we denote $\mu_3$ as $\mu_1\boxtimes\mu_2$.

\subsection{Fixed point theorems }

In the proof of the main theorems we will use the following two theorems on convergence to fixed points of a function. The first one proved independetly by Denjoy \cite{D} and Wolff \cite{W} considers holomorphic maps from the unit disc, $\mathbb{D}= \{z : |z| < 1\}$, to itself. For this we need the concept of Denjoy–Wolff point. Let $f : \mathbb{D}\to \mathbb{D}$ be an analytic function. 
A point $w \in\mathbb{D}$ is called Denjoy–Wolff point for $f$ if either

\begin{enumerate}
\item $w\in \mathbb{D}$ and $f(\omega) = \omega$; or
\item  $|w| = 1$ , $lim_{r\uparrow 1}f(r\omega)=\omega$ and $\lim_{r\uparrow 1}\frac{\omega-f(r\omega)}{(1-r)\omega}\leq1.$
\end{enumerate}

Except for the identity map of $\mathbb{D}$ every function $f$ has a unique Denjoy-Wolff point.
The theorem of Denjoy and Wolff shows that for generic maps this point is the limit of the iterates of $f$.
\begin{theorem}\cite{D,W}
Assume that $f : \mathbb{D}\to \mathbb{D}$ is not a conformal automorphism of $\mathbb{D}$ and denote by $\omega$ its Denjoy–Wolff point $w$. Let $f^{\circ n}$ donote the $n$-fold composition of $f$. Then, for any $z\in \mathbb{D}$ the sequence $(f^{\circ n}(z))^\infty_{n=0}$ converges to $\omega$.
\end{theorem}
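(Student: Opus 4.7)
The plan is to split on whether $f$ has a fixed point in $\mathbb{D}$ and handle the two cases by quite different techniques; the no-interior-fixed-point case is the substantive one and rests on Wolff's invariant horocycle construction.

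Interior fixed point case. If $f(\omega)=\omega$ with $\omega\in\mathbb{D}$, I would conjugate by the Möbius automorphism of $\mathbb{D}$ sending $\omega$ to $0$ to reduce to the Schwarz lemma: a holomorphic self-map $g$ of $\mathbb{D}$ fixing $0$ that is not a rotation (hence not an automorphism) satisfies $|g(z)|\le|z|$ with strict inequality away from $0$, and $|g'(0)|<1$. Transferring back, $|f'(\omega)|<1$, so on a small closed hyperbolic ball around $\omega$ the map $f$ is a strict contraction for the hyperbolic metric; iterating gives $f^{\circ n}(z)\to\omega$ for every $z\in\mathbb{D}$, which is exactly condition (1) in the definition of Denjoy--Wolff point.

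No interior fixed point case. Here the idea is to construct $\omega\in\partial\mathbb{D}$ as a limit of interior fixed points of small perturbations. For $r\in(0,1)$ the map $f_{r}(z)=rf(z)$ sends $\overline{\mathbb{D}}$ into a compact subset of $\mathbb{D}$, so by Brouwer it has a fixed point $\alpha_{r}\in\mathbb{D}$, unique by Schwarz--Pick. Because $f$ itself has no interior fixed point, any subsequential limit $\omega$ of $\alpha_{r}$ as $r\uparrow 1$ must lie on $\partial\mathbb{D}$; fix such a sequence $r_{n}\uparrow 1$ with $\alpha_{r_{n}}\to\omega$. The crux is then Wolff's lemma: applying the Schwarz--Pick inequality to $f_{r_{n}}$ at $\alpha_{r_{n}}$, rearranging it into the equivalent form for the quantity $(1-|f(z)|^{2})/|1-\overline{\alpha_{r_{n}}}f(z)|^{2}$, and passing to the limit $n\to\infty$, one obtains that for every $c>0$ the horocycle
\[
H(\omega,c)=\bigl\{z\in\mathbb{D}\,:\,|\omega-z|^{2}<c(1-|z|^{2})\bigr\}
\]
is invariant under $f$. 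The limiting form also yields the angular-derivative normalization in part (2) of the definition of the Denjoy--Wolff point, identifying $\omega$ as the Denjoy--Wolff point of $f$.

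With Wolff's lemma in hand, convergence of the iterates follows. For any $z\in\mathbb{D}$ choose $c_{0}$ so that $z\in H(\omega,c_{0})$; invariance forces the whole orbit into $H(\omega,c_{0})$. Since $\{f^{\circ n}\}$ is a normal family on $\mathbb{D}$ (Montel, as it is uniformly bounded), every subsequential locally uniform limit $g:\mathbb{D}\to\overline{\mathbb{D}}$ is either constant or, by the open mapping theorem, maps into $\mathbb{D}$. The non-constant alternative can be excluded by extracting a further subsequence $n_{k+1}-n_{k}$ to ensure $f^{\circ(n_{k+1}-n_{k})}\to\tilde g$ as well; then a diagonal/composition argument shows the image of $g$ would have to consist of fixed points of $f$, contradicting the hypothesis. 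Hence every subsequential limit is a constant, and since $\bigcap_{c>0}\overline{H(\omega,c)}\cap\overline{\mathbb{D}}=\{\omega\}$, horocycle invariance forces that constant to be $\omega$. The main obstacle is Wolff's horocycle lemma itself: one must carefully track how the Schwarz--Pick pseudo-hyperbolic disks centered at $\alpha_{r_{n}}$ degenerate, as $\alpha_{r_{n}}\to\omega\in\partial\mathbb{D}$, into Euclidean disks internally tangent to $\partial\mathbb{D}$ at $\omega$, and choose their pseudo-hyperbolic radii so that the limiting $f$-invariant regions are exactly the horocycles $H(\omega,c)$.
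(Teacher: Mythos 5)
The paper does not actually prove this statement: it is the classical Denjoy--Wolff theorem, quoted directly from \cite{D,W}, so there is no internal argument to compare yours against. Your sketch follows the standard classical route (Schwarz lemma when there is an interior fixed point; Wolff's horocycle lemma otherwise), and its overall architecture is sound, but two steps are stated in a way that would not survive being written out in full. First, in the interior-fixed-point case, a strict contraction ``on a small closed hyperbolic ball around $\omega$'' only yields convergence for points already in that ball, and you have not explained why the orbit of an arbitrary $z$ enters it. The repair is standard: for any $z$ the closed hyperbolic ball $\overline{B}(\omega,\rho(z,\omega))$ is $f$-invariant because $\rho(f(w),\omega)=\rho(f(w),f(\omega))\leq\rho(w,\omega)$, and on this compact invariant ball the non-automorphism $f$ contracts the hyperbolic metric by a uniform factor $k<1$, which gives $f^{\circ n}(z)\to\omega$ directly.

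Second, in the boundary case your exclusion of non-constant limits misstates what the composition argument yields. From $g=\lim f^{\circ n_{k+1}}=\tilde g\circ g$ one concludes that $\tilde g$ fixes every point of the open set $g(\mathbb{D})$ --- these are fixed points of $\tilde g$, not of $f$ --- hence $\tilde g\equiv\mathrm{id}$, and one must then separately argue that the identity being a locally uniform limit of iterates $f^{\circ m_k}$ with $m_k\to\infty$ forces $f$ to be an automorphism (e.g.\ write $f^{\circ m_k}=f\circ f^{\circ(m_k-1)}$ and pass to a limit to produce a holomorphic right inverse of $f$), contradicting the hypothesis. A cleaner alternative avoiding normal families entirely is to use the monotonicity of the hyperbolic step $n\mapsto\rho\bigl(f^{\circ n}(z),f^{\circ(n+1)}(z)\bigr)$: an interior subsequential limit point $c$ of the orbit would give $\rho(c,f(c))=\rho\bigl(f(c),f^{\circ 2}(c)\bigr)$, forcing either $f(c)=c$ or equality in Schwarz--Pick, both excluded. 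Finally, be explicit in your last step that a locally uniform constant limit is the limit of the orbit of \emph{every} point, in particular of points lying in arbitrarily small horodisks $H(\omega,\epsilon)$; that is what entitles you to intersect over all $c>0$, since the orbit of a fixed $z$ by itself is only confined to the single horodisk $H(\omega,c_0)$.
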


The above theorem is obviously still valid for any open set comformally equivalent to the unit disc. For the operator valued case we use a similar result for Banach spaces due to Earl and Hamilton \cite{EH}. In this case we need that  $f$ maps $D$ strictly inside $D$.

\begin{theorem}\cite{EH} Let $D$ be a connected open subset of a complex Banach space $X$ and let $f$ be a holomorphic mapping of $D$ into itself such that:
\begin{enumerate}
\item the image f(D) is bounded in norm;
\item the distance between points f(D) and points in the exterior of $D$ is bounded from below by a positive constant.
\end{enumerate}
Then the mapping $f$ has a unique fixed point $w$ in $D$ and for any point $z\in D$, the sequence $(f^{\circ n}(z))^\infty_{n=0}$ converges to $w$.
\end{theorem}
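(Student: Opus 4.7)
The plan is to mirror the scalar proof of Theorem \ref{ThmScalMultiplicatif} in the Banach-space setting of Section 2.4, replacing the Denjoy--Wolff theorem by the Earl--Hamilton fixed-point theorem stated in Section 2.3. The three main steps are: (i) recognise $T_{b^{-1}}$ as the fixed-point operator encoding the $B$-valued multiplicative subordination relation for $\mu_{2}=\mu_{3}\boxdiag\mu_{1}$; (ii) establish that $T_{b^{-1}}$ maps $\Delta_{b^{-1}}$ into a subset bounded away from $\partial\Delta_{b^{-1}}$ whenever $\|b^{-1}\|\leq K$; and (iii) pass to the limit in the fixed-point equation to recover the subordination formula for $F_{\mu_{2}}$.

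For step (i), one starts from the operator-valued subordination principle for $\mu_{1}\boxtimes\mu_{2}=\mu_{3}$, which produces a function $w_{3}$ satisfying both $F_{\mu_{2}}(b)=bw_{3}(b)F_{\mu_{3}}(w_{3}(b)^{-1})$ and a companion identity $h_{\mu_{1}}(\,\cdot\,)=h_{\mu_{3}}(w_{3}(b))\,\cdot\,$ evaluated at an argument obtained from $b$ and $w_{3}(b)$. Substituting $H_{\mu}(c)=h_{\mu}(c^{-1})$, inverting, and grouping terms converts this identity into exactly $w=T_{b^{-1}}(w)=b^{-1}H_{\mu_{1}}\bigl(bwH_{\mu_{3}}(w)w\bigr)^{-1}$. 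Hence any fixed point of $T_{b^{-1}}$ inside $\Delta_{b^{-1}}$ will necessarily coincide with the subordination function evaluated at $b$.

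Step (ii) is where the work lies. Writing $w=b^{-1}u$ with $\|u\|<2/\alpha_{\mu_{1}}^{*}$, one chains the three operator-valued estimates on $h_{\mu}$ and $H_{\mu}$ from Section 2.4 (the $B$-valued analogues of \eqref{ineg_h_mu}, expressed through the first moment $\alpha_{\mu}$, the variance $\sigma_{\mu}^{2}$, and the bound $\Vert\mu\Vert$). First one bounds $\|H_{\mu_{3}}(w)\|$ and the minimum of its spectrum in terms of $\alpha_{\mu_{3}},\sigma_{\mu_{3}},\Vert\mu_{3}\Vert$, which controls the argument $bwH_{\mu_{3}}(w)w$ of $H_{\mu_{1}}$. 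Second, a further application of the Jacobi-type inequality shows that $H_{\mu_{1}}$ evaluated there has an inverse close to $\mu_{1}(\mathcal{X})^{-1}$, so that its norm is strictly less than $2/\alpha_{\mu_{1}}^{*}$. The two terms inside the maximum defining $K$ are chosen precisely so that these two chains of estimates close up simultaneously: the first controls the stability of $H_{\mu_{1}}$ after inversion, the second the stability of $wH_{\mu_{3}}(w)w$. The resulting inclusion $T_{b^{-1}}(\Delta_{b^{-1}})\subset b^{-1}D(0,r)$ with $r<2/\alpha_{\mu_{1}}^{*}$ yields both boundedness of the image and a positive distance to $\partial\Delta_{b^{-1}}$.

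The remainder is formal: Earl--Hamilton applied to the holomorphic self-map $T_{b^{-1}}:\Delta_{b^{-1}}\to\Delta_{b^{-1}}$ provides a unique fixed point $w_{3}(b)\in B$ together with convergence of the iterates from any starting point in $\Delta_{b^{-1}}$. Inserting the limit into the identity derived in step (i) and using $H_{\mu}(c)=h_{\mu}(c^{-1})$ gives the formula $F_{\mu_{2}}(b)=bw_{3}(b)F_{\mu_{3}}(w_{3}(b)^{-1})$. The principal technical obstacle is the propagation of norm estimates through the nested inverses in $T_{b^{-1}}$, and in particular showing that spectral positivity of $\mu_{1}(\mathcal{X})$ (encoded by $\alpha_{\mu_{1}}^{*}>0$) persists along the iteration so that all inverses remain well-defined; this is the non-commutative analogue of staying away from the real axis in the scalar case, and it is the mechanism that forces the two-piece form of $K$.
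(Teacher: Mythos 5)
Your proposal does not address the statement you were asked to prove. The statement is the Earle--Hamilton fixed-point theorem itself: that a holomorphic self-map $f$ of a connected open subset $D$ of a complex Banach space, whose image is bounded and stays a positive distance away from the exterior of $D$, has a unique fixed point which attracts all orbits. What you have written instead is a proof sketch of Theorem \ref{deconvolutionMultiplicativeCase} (the operator-valued multiplicative deconvolution result), and in the final step you explicitly \emph{invoke} the Earle--Hamilton theorem (``Earl--Hamilton applied to the holomorphic self-map $T_{b^{-1}}$ \dots provides a unique fixed point''). Relative to the actual statement, this is circular: you are assuming the very result to be proved and using it to derive one of its applications in the paper.

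A genuine proof of the statement (this is a classical result of Earle and Hamilton, which the paper cites rather than proves) runs along entirely different lines: one equips $D$ with the Carath\'eodory--Reiffen--Finsler pseudometric built from holomorphic maps of $D$ into the unit disc, shows that the two hypotheses (bounded image, positive distance from $f(D)$ to the exterior of $D$) allow one to dilate $f$ slightly --- for instance by considering $g(x)=f(x)+\varepsilon\,Df(x)\bigl(f(x)-x_{0}\bigr)$ for small $\varepsilon>0$, which still maps $D$ into itself --- and deduces by the Schwarz--Pick property that $f$ is a strict contraction for this pseudometric, with contraction constant $1/(1+\varepsilon)$. One then verifies that the pseudometric dominates a multiple of the norm metric on $f(D)$, so that the Banach contraction principle applies and yields the unique fixed point and the convergence of the iterates. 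None of these ingredients (the invariant pseudometric, the dilation argument, the comparison with the norm, the contraction principle) appears in your proposal, so as a proof of the stated theorem it has no content. The material you did write belongs to the proof of Theorem \ref{deconvolutionMultiplicativeCase} in Section 4.2, where it is broadly consistent with the paper's own argument (Lemmas \ref{boundH1} and \ref{welldefiKb}), but that is a different statement.
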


\subsection{Operator-valued free probability}

In this part we recall the basic notions of operator-valued free probability. We also give three lemmas of independent interest that will be used in the proofs of Theorems \ref{deconvolutionAdditiveCase} and \ref{deconvolutionMultiplicativeCase}.
 
 Let us introduce first several operator-valued versions of the transforms considered in Section \ref{scalarTransform}.   We refer to \cite{Sp98} for a basic introduction to operator-valued non-commutative spaces. In this section, we consider unital inclusions $B\subset A$ of a $C^{*}$-algebra, and we denote by $E:A\to B$ a unit-preserving conditional expectation. Moreover, we denote by $B(\mathcal{X})$ the $*$-algebra of non-commutative polynomials in a self-adjoint variable $\mathcal{X}$ with coefficients in $B$. Following \cite{PV}, we define a $B$-valued non-commutative distribution as a unital $B$-module map $\mu:B(\mathcal{X})\to B$ such that 
$$\left[\mu(f_{i}(\mathcal{X})^{*}f_{j}(\mathcal{X}))\right]_{1\leq i,j\leq n}\geq 0\text{ in }M_{n}(B)$$
for all subset $\lbrace f_{i}(x)\rbrace_{1\leq i\leq n}$ of $B(\mathcal{X})$. The distribution $\mu$ is said bounded by $M>0$ if 
$$\mu(\mathcal{X}b_{1}\mathcal{X}\dots \mathcal{X} b_{n}\mathcal{X})< M^{n+1} \Vert b_{1}\Vert \dots\Vert b_{n}\Vert$$
for $b_{1},\dots,b_{n}\in B$. Note that for $a\in A$ self-adjoint, the map $\phi_{a}:B(\mathcal{X})\to B$ defined by $\phi_{a}(P)=E(P(a))$ is a non-commutative distribution. For any non-commutative distribution $\mu$, there exists a unital inclusion of $C^{*}$-algebras $B\subset A$, a conditional expectation $E:A\to B$ and an element $a\in A$ such that $\mu=\phi_{a}$ (see \cite[Proposition 1.2]{PV} and \cite[Theorem 2.8]{Wi}).

In this section, every non-commutative distribution is assumed to be $B$-valued.\\
Let us denote by $B^{+}$ the subset of $B$ consisting of elements with positive imaginary part. Namely, $b\in B^{+}$ if $b$ is written $b=b_{1}+ib_{2}$ with $b_{1}$ self-adjoint and $b_{2}>0$. Likewise, we define $B^{-}$ as the set of elements of $B$ with negative imaginary part. Given a bounded non-commutative distribution $\mu$, we introduce the following maps:
\begin{itemize}
\item $G_{\mu}:B^{+}\to B^{-}$ its Cauchy transform, defined by $$G_{\mu}(b)=\mu[(b-\mathcal{X})^{-1}].$$
In the case that $\mu=\phi_{a}$, the Cauchy transform of $\mu$ can also be written as $G_{\mu}=E[(b-a)^{-1}]$.
\item $F_{\mu}:B^{+}\to B^{+}$ its reciprocal Cauchy transform $F_{\mu}=G_{\mu}^{-1}$.
\item $\eta_{\mu}:B^{+}\to B$ its $\eta$-transform defined by $\eta_{\mu}(b)=b[b^{-1}-F_{\mu}(b^{-1})]$.
\end{itemize}

Although these definitions imply considering non-commutative series in $\mathcal{X}$ instead of polynomials, we can show that all these maps are well-defined and analytic by a limit argument (see \cite{Voi95} for a rigorous proof).
Finally, we denote by $\sigma^{2}:=\Vert \mu(\mathcal{X}^{2})-\mu(\mathcal{X})^{2}\Vert$ the norm of the variance of $\mu$.

Let us first slightly improve a bound of \cite[Proposition 1.2]{PV} for later purposes.
\begin{lemma}\label{boundbXproof}
Let $P\in B(\mathcal{X})$. Then,
$$\mu(P^{*}b^{*}bP)\leq \Vert b^{*}b\Vert \mu(P^{*}P)\quad \text{and}\quad \mu(P^{*}\mathcal{X}^{2}P)\leq M^{2}\mu(P^{*}P).$$
\end{lemma}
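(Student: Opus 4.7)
The plan is to derive both inequalities from the positivity of the map $\mu$, by inserting the square root of a suitable positive element between $P^{*}$ and $P$. For the first bound this positive element lives in $B$ itself; for the second it lives in $A$ and requires the realization $\mu=\phi_{a}$ with $\Vert a\Vert\leq M$ recalled from \cite[Proposition 1.2]{PV}, \cite[Theorem 2.8]{Wi}, which is what the boundedness hypothesis on $\mu$ provides.

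For the first inequality, I would start from the observation that $\Vert b^{*}b\Vert\cdot 1_{B}-b^{*}b\geq 0$ in the $C^{*}$-algebra $B$, so I can choose $c\in B$ with $c^{*}c=\Vert b^{*}b\Vert\cdot 1_{B}-b^{*}b$. Using that $\mu$ is a $B$-module map and that $\Vert b^{*}b\Vert\cdot 1_{B}$ lies in the center,
$$\Vert b^{*}b\Vert\,\mu(P^{*}P)-\mu(P^{*}b^{*}bP)=\mu\bigl(P^{*}(c^{*}c)P\bigr)=\mu\bigl((cP)^{*}(cP)\bigr).$$
The right-hand side is nonnegative in $B$ by the positivity condition of $\mu$ applied to the one-element family $\{cP\}\subset B(\mathcal{X})$ (the $1\times 1$ case of the matrix positivity axiom), which yields exactly the claimed bound.

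For the second inequality, I would invoke the realization $\mu=\phi_{a}$ for some self-adjoint $a\in A$ with $\Vert a\Vert\leq M$; such a realization follows from the GNS-type construction, the moment bound defining ``$\mu$ bounded by $M$'' being precisely what controls the operator norm of the action of $\mathcal{X}$ on the associated $B$-Hilbert module. Consequently $M^{2}\cdot 1_{A}-a^{2}\geq 0$ in $A$, so $M^{2}\cdot 1_{A}-a^{2}=c^{*}c$ for some $c\in A$; for $P\in B(\mathcal{X})$ one then computes
$$M^{2}\,\mu(P^{*}P)-\mu(P^{*}\mathcal{X}^{2}P)=E\bigl(P(a)^{*}(M^{2}\cdot 1_{A}-a^{2})P(a)\bigr)=E\bigl((cP(a))^{*}(cP(a))\bigr),$$
which is nonnegative in $B$ by positivity of the conditional expectation $E$.

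The main obstacle I expect is precisely this point: checking that the realization of $\mu$ produced by \cite{PV,Wi} can be taken with $\Vert a\Vert\leq M$ under the boundedness hypothesis as stated. Once this norm bound on $a$ is in place, both inequalities reduce to the same clean $C^{*}$-algebraic squaring argument, combined with the positivity of $\mu$ in the first case and of $E$ in the second.
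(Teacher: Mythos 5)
Your argument for the first inequality is correct and is essentially the standard one (the paper simply cites \cite{PV} for it rather than reproducing it). The issue is with the second inequality, and you have in fact put your finger on it yourself: the obstacle you flag is not a routine technical point but a genuine circularity.

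The difficulty is that the realization $\mu=\phi_a$ with $\Vert a\Vert\leq M$ is logically \emph{downstream} of this lemma, not upstream. The construction of $a$ (via the GNS/Stinespring-type $B$-Hilbert-module built from $\langle P,Q\rangle=\mu(Q^{*}P)$) produces the left-multiplication by $\mathcal{X}$, whose operator norm is exactly $\sup_{P}\Vert\mu(P^{*}\mathcal{X}^{2}P)\Vert^{1/2}/\Vert\mu(P^{*}P)\Vert^{1/2}$. Thus ``$\Vert a\Vert\leq M$'' \emph{is} the second inequality. What is available a priori from \cite[Proposition 1.2]{PV} is only the weaker bound $\mu(P^{*}\mathcal{X}^{2}P)\leq 4M^{2}\mu(P^{*}P)$, hence a realization with $\Vert a\Vert\leq 2M$; starting from such a realization your squaring argument would only return the constant $4M^{2}$, not the improved $M^{2}$. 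This is precisely why the paper states the lemma as a ``slight improvement'' of the PV bound and why the improvement has to be proved by some means that does not presuppose the sharp realization.

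The paper's route is to stay entirely inside the abstract $*$-algebra of power series $\hat{B}(\mathcal{X})$, where one can manufacture a self-adjoint square root of $1-[\mathcal{X}/T]^{2}$ for any $T>M$ from the Taylor coefficients of $\sqrt{1-x^{2}}$; the condition $T>M$ is exactly what makes that series lie in $\hat{B}(\mathcal{X})$. Positivity of the extension $\tilde{\mu}$ then gives $\mu(P^{*}\mathcal{X}^{2}P)\leq T^{2}\mu(P^{*}P)$ for every $T>M$, and letting $T\downarrow M$ gives the constant $M^{2}$. If you want to salvage your $C^{*}$-algebraic version, you would need to independently establish $\Vert a\Vert\leq M$ from the moment bound alone (e.g.\ by a direct estimate on $\Vert a^{n}\Vert$ on the pre-Hilbert module), but any such estimate amounts to proving the lemma by other means.
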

\begin{proof}
The first inequality is already proven in the proof of \cite[Proposition 1.2]{PV}. In the same paragraph, the authors have also proven that 
$$ \mu(P^{*}\mathcal{X}^{2}P)\leq 4M^{2}\mu(P^{*}P).$$
We will adapt their proof to give our result: define for each monomial $f=b_{0}\mathcal{X}b_{1}\dots\mathcal{X}b_{n}\in B(\mathcal{X)}$ the quantity $\mathfrak{p}(f)=M^{n}\Vert b_{0}\Vert \dots\Vert b_{n}\Vert$, and denote by $\hat{B}(\mathcal{X})$ the $*$-algebra 
$$\hat{B}(\mathcal{X})=\left\lbrace \sum_{n=0}^{\infty}f_{n}~|~f_{n}\text{ monomial in }B(\mathcal{X})\text{ such that} \sum_{n=0}^{\infty}\mathfrak{p}(f_{n})<\infty\right\rbrace.$$
Let $\tilde{\mu}$ be the positive $B$-valued linear map extending $\mu$ from $B(\mathcal{X})$ to $\hat{B}(\mathcal{X})$ with the formula 
$$\tilde{\mu}(\sum_{n=0}^{\infty}f_{n})=\sum_{n=0}^{\infty}\mu(f_{n}).$$
For $T>M$ and $n\geq 0$, let $g_{n,T}=(2n)![(1-2n)(n!)^{2}T^{2n}4^{n}]^{-1}\mathcal{X}^{2n}$. Then, $g_{n,T}=g_{n,T}^{*}$ and $\mathfrak{p}(g_{n,T})\leq (M/T)^{n}$. Thus, $g_{T}=\sum_{n=0}^{\infty}g_{n,T}\in \hat{B}(\mathcal{X})$. Since $g_{T}^{2}=1-[\mathcal{X}/T]^{2}$, we have
$$0\leq \tilde{\mu}(P^{*}g_{T}^{2}P)=\tilde{\mu}(P^{*}(1-[\mathcal{X}/T]^{2})P)=\mu(P^{*}P)-T^{-2}\mu(P^{*}\mathcal{X}^{2}P).$$
Hence, $\mu(P^{*}\mathcal{X}^{2}P)\leq T^{2}\mu(P^{*}P)$. Since this holds for all $T>M$, we finally get
$$\mu(P^{*}\mathcal{X}^{2}P)\leq M^{2}\mu(P^{*}P).$$\end{proof}
We give then in the operator valued context an estimate of $h_{\mu}$ similar to \eqref{ineg_h_mu}.
\begin{lemma}\label{general_bound_Hmu}
Denote by $\sigma_{\inf}(v)$ the minimum of the spectrum of a self-adjoint operator $v\in B$. For $b\in B^{+}$, we have
$$\Vert h_{\mu}(b)-\mu(X)\Vert \leq  \frac{4\Vert \mu(X^{2})-\mu(X)^{2})\Vert}{\sigma_{\inf}\Im(b)}.$$
\end{lemma}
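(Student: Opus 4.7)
My plan is to identify $h_{\mu}(b)-\mu(\mathcal{X})$ as the operator-valued Schur complement of $b-a$, where $a$ is the self-adjoint generator of $\mu$ in the standard realization $\mu(P(\mathcal{X}))=E(P(a))$ with $E\colon A\to B$ a conditional expectation (such a representation exists by \cite[Proposition 1.2]{PV} and \cite[Theorem 2.8]{Wi}). This route bypasses any attempt to control $F_{\mu}(b)$ directly and reduces the lemma to a one-step block computation plus a trivial resolvent bound.

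Concretely, I would form the Hilbert $B$-module $H=L^{2}(A,E)$ with cyclic vector $\xi_{0}$ and consider the orthogonal projection $e_{B}\colon H\to B\xi_{0}\cong B$. Since $e_{B}Xe_{B}=E(X)e_{B}$ for every $X\in A$, the operator $a$ has the $2\times 2$ block form
$$a=\begin{pmatrix} m & \gamma^{*}\\ \gamma & a_{2}\end{pmatrix},\qquad m=\mu(\mathcal{X}),\quad \gamma=e_{B}^{\perp}a\,e_{B},\quad a_{2}=e_{B}^{\perp}a\,e_{B}^{\perp},$$
with $a_{2}$ self-adjoint. A direct computation gives $\gamma^{*}\gamma=e_{B}a(1-e_{B})a\,e_{B}=E(a^{2})-m^{2}$, so $\Vert\gamma\Vert^{2}=\Vert\mu(\mathcal{X}^{2})-\mu(\mathcal{X})^{2}\Vert$. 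The Schur complement formula applied to $b-a$ then yields
$$F_{\mu}(b)=(b-m)-\gamma^{*}(b-a_{2})^{-1}\gamma,$$
which rearranges immediately to $h_{\mu}(b)-\mu(\mathcal{X})=\gamma^{*}(b-a_{2})^{-1}\gamma$.

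The bound is then a one-line estimate: since $a_{2}$ is self-adjoint, $\Im(b-a_{2})=\Im b\geq\sigma_{\inf}(\Im b)\cdot 1_{H}$, which implies $\Vert(b-a_{2})^{-1}\Vert\leq\sigma_{\inf}(\Im b)^{-1}$ by the standard estimate for operators with strictly positive imaginary part, and submultiplicativity gives
$$\Vert h_{\mu}(b)-\mu(\mathcal{X})\Vert\leq\Vert\gamma\Vert^{2}\,\Vert(b-a_{2})^{-1}\Vert\leq\frac{\Vert\mu(\mathcal{X}^{2})-\mu(\mathcal{X})^{2}\Vert}{\sigma_{\inf}(\Im b)}\leq\frac{4\Vert\mu(\mathcal{X}^{2})-\mu(\mathcal{X})^{2}\Vert}{\sigma_{\inf}(\Im b)}.$$
The main obstacle is justifying the operator-valued Schur complement identity, i.e.\ checking that the compression $e_{B}(b-a)^{-1}e_{B}$ corresponds (under $e_{B}H\cong B$) to left multiplication by $\bigl[(b-m)-\gamma^{*}(b-a_{2})^{-1}\gamma\bigr]^{-1}$. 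This reduces to the elementary $2\times 2$ block inversion formula once $b-a_{2}$ has been shown invertible on $e_{B}^{\perp}H$ via its strictly positive imaginary part, so the remaining technical burden is only the verification that each $e_{B}Xe_{B}$ really ``is'' left multiplication by $E(X)$ on $e_{B}H$; the latter is the content of Jones' basic construction and is the one place where some care with the Hilbert $B$-module structure is needed. If one wishes to avoid this machinery entirely, an alternative is to work with $M(b):=\mu[\tilde{\mathcal{X}}(b-\mathcal{X})^{-1}\tilde{\mathcal{X}}]$ (with $\tilde{\mathcal{X}}=\mathcal{X}-m$), bound $\Vert M(b)\Vert\leq\sigma^{2}/\sigma_{\inf}(\Im b)$ by a single Cauchy--Schwarz using $(b-\mathcal{X})^{-*}(b-\mathcal{X})^{-1}\leq\sigma_{\inf}(\Im b)^{-2}$, and then close the identity $[1+M(b)(b-m)^{-1}]\,(h_{\mu}(b)-m)=M(b)$ by a Neumann series argument, which accounts for the factor $4$ in the stated bound.
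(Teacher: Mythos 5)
Your Schur-complement argument is correct, and it is a genuinely different route from the one the paper takes. The paper scalarizes: for a positive functional $\phi$ it considers $f_\phi(z)=\phi(h_\mu(u+zv)-\mu(X))$, invokes the results of Belinschi--Popa--Vinnikov and Belinschi--Mai--Speicher to show that $f_\phi$ is a self-map of $\mathbb{C}^+$ with the right asymptotics, applies the Nevanlinna representation to bound $|f_\phi(i)|$ by $\phi\bigl(\mu(Xv^{-1}X)-\mu(X)v^{-1}\mu(X)\bigr)$, and then uses positivity of the map $c\mapsto\mu([X-\mu(X)]c[X-\mu(X)])$ together with the decomposition of a general functional into four positive ones. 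The factor $4$ in the statement is an artifact of this last step. Your approach, realizing $\mu=\phi_a$, compressing against $e_B$, and reading $h_\mu(b)-\mu(\mathcal X)=\gamma^*(L_b|_{e_B^\perp H}-a_2)^{-1}\gamma$ off the block inversion formula, trades the complex-analytic input for the Jones basic construction and yields the sharper constant $1$ rather than $4$, with the same natural quantity $\|\gamma\|^2=\|\mu(\mathcal X^2)-\mu(\mathcal X)^2\|$ appearing. Both proofs ultimately rest on realizability of $\mu$ (which the paper guarantees via \cite{PV,Wi}); yours uses it more directly.

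One caveat on your fallback sketch: the Neumann-series closure of the identity $[1+M(b)(b-m)^{-1}](h_\mu(b)-m)=M(b)$ only goes through when $\|M(b)(b-m)^{-1}\|<1$, i.e.\ roughly when $\sigma_{\inf}(\Im b)$ exceeds the standard deviation $\sigma$, whereas the lemma is stated for all $b\in B^+$. Since that alternative was offered only as a way to avoid the basic construction, and your primary argument (the Schur complement) covers the full range, this is not a flaw in the proof you actually give, but the claim that the Neumann series ``accounts for the factor $4$'' should be qualified as valid only on the restricted domain where the geometric series converges (which, incidentally, is the domain $B_K$ in which the lemma is actually applied later in the paper).
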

\begin{proof}
The proof follows the method of \cite[Remark 2.5]{BPV} and \cite[Lemma 2.3]{BMS}. Let $b=u+iv$, with $v>0$ and let $\phi\in B^{*}$ be a positive functional. Set
$$f_{\phi}(z)=\phi(h_{\mu}(u+zv)-\mu(X))$$
for $z\in \mathbb{C}^{+}$. By \cite[Remark 2.5]{BPV}, $f_{\phi}:\mathbb{C}^{+}\to \mathbb{C}^{+}$, and by \cite[Lemma 2.3]{BMS} we have asymptotically
$$\lim_{z\rightarrow \infty} f_{\phi}(z)=0, \quad \lim_{z\rightarrow \infty} zf_{\phi}(z)=\phi(\mu(X)v^{-1}\mu(X)-\mu(Xv^{-1}X)).$$
Thus, by the Nevanlinna representation, there exists a probability measure $\rho$ on $\mathbb{R}$ such that 
$$f_{\phi}(z)=\phi(\mu(Xv^{-1}X)-\mu(X)v^{-1}\mu(X))\int_{\mathbb{R}}\frac{1}{t-z}d\rho(t),$$
 and then, by (\ref{ImF}),  $|f_{\phi}(z)|\leq \phi(\mu(Xv^{-1}X)-\mu(X)v^{-1}\mu(X))/\Im z.$ 
 
  Now, note that  $\Phi(b):=\mu(X)b\mu(X)-\mu(XbX)=\mu([X-\mu(X)]b[X-\mu(x)])$ is a positive map, so that 
$$\Phi(v^{-1})\leq \Phi(\Vert v^{-1}\Vert)=\Vert v^{-1}\Vert (\mu(X^{2})-\mu(X)^{2}).$$
Therefore,
$$\phi(\mu(Xv^{-1}X)-\mu(X)v^{-1}\mu(X))\leq \Vert v^{-1}\Vert \phi(\mu(X^{2})-\mu(X)^{2})\leq \Vert v^{-1}\Vert \Vert \mu(X^{2})-\mu(X)^{2}\Vert.$$
In particular,
$$\phi(h_{\mu}(b)-\mu(X))=f_{\phi}(i)\leq \Vert v^{-1}\Vert \Vert \mu(X^{2})-\mu(X)^{2}\Vert.$$
Hence, since any functional on $B$ is the sum of four positive functionals and since $B$ is isometrically embedded in its bidual, 
$$\Vert h_{\mu}(b)-\mu(X)\Vert \leq 4 \Vert v^{-1}\Vert \Vert \mu(X^{2})-\mu(X)^{2}\Vert=4\frac{\Vert \mu(X^{2})-\mu(X)^{2}\Vert}{\sigma_{\inf}(v)}.$$
\end{proof}
We give now a strengthened inequality when $\mu$ is bounded. Let $\mu$ be a realizable non-commutative distribution, and define $H_{\mu}:B^{+}\to B$ by 
$$H_{\mu}(b)=h_{\mu}(b^{-1})=b^{-1}-F(b^{-1}).$$
\begin{lemma}\label{studyHmu}
If $\mu$ is bounded by $M$, then the map $H_{\mu}$ can be extended to an analytic function on the open disk $D_{M^{-1}}:=\lbrace b\in B,\Vert b\Vert <(M)^{-1}\rbrace$. Moreover, $H_{\mu}$ satisfies the inequality 
$$\Vert H_{\mu}(b)-\mu(X)\Vert \leq \Vert \mu(X^{2})-\mu(X)^{2})\Vert \frac{1}{\Vert b\Vert ^{-1}-M}.$$
\end{lemma}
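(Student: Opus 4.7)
The plan is to extend $H_{\mu}$ from its natural domain to $D_{M^{-1}}$ by a power-series representation and to bound it via operator-valued Cauchy--Schwarz combined with Lemma \ref{boundbXproof}.

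First, I introduce the auxiliary series $\tilde{G}(b) := \sum_{n \geq 0}\mu((Xb)^{n}X)$ and $S(b) := \sum_{n \geq 0}\mu((Xb)^{n}) = 1 + \tilde{G}(b)b$. The boundedness assumption gives $\|\mu((Xb)^{n}X)\|\leq M^{n+1}\|b\|^{n}$ directly, and right $B$-linearity of $\mu$ yields $\|\mu((Xb)^{n})\| = \|\mu((Xb)^{n-1}X)b\| \leq M^{n}\|b\|^{n}$; both series therefore converge absolutely on $D_{M^{-1}}$ and define analytic $B$-valued functions there. For $b$ invertible with $\|b^{-1}\|$ large enough that the Neumann expansion $(b^{-1}-X)^{-1} = \sum_{n\geq 0}b(Xb)^{n}$ lies in the original domain of $F_{\mu}$, one computes $G_{\mu}(b^{-1}) = bS(b)$ and hence $F_{\mu}(b^{-1}) = S(b)^{-1}b^{-1}$, giving $H_{\mu}(b) = S(b)^{-1}\tilde{G}(b)$. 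By the identity principle, this formula provides the desired analytic extension of $H_{\mu}$ on any connected open subset of $D_{M^{-1}}$ containing $0$ on which $S(b)$ is invertible.

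For the norm inequality, I combine the telescoping identity
\begin{equation*}
\tilde{G}(b) - S(b)\mu(X) = \sum_{n\geq 1}\bigl(\mu((Xb)^{n}X) - \mu((Xb)^{n})\mu(X)\bigr) = \mu\bigl(Xb(1-Xb)^{-1}Y\bigr),
\end{equation*}
where $Y := X - \mu(X)$, with a bootstrap: splitting $X = Y + \mu(X)$ on the outermost $X$ and iterating the resulting recursion leads to $H_{\mu}(b) - \mu(X) = S(b)^{-1}(1-\mu(X)b)^{-1}\mu(Yb(1-Xb)^{-1}Y)$. The central estimate $\|\mu(Yb(1-Xb)^{-1}Y)\| \leq \sigma^{2}\|b\|/(1-M\|b\|)$ is obtained as follows: setting $c := b(1-Xb)^{-1} \in A$ with $\|c\| \leq \|b\|/(1-M\|b\|)$, operator-valued Cauchy--Schwarz $\|\mu(P^{*}Q)\|^{2} \leq \|\mu(P^{*}P)\|\cdot\|\mu(Q^{*}Q)\|$ applied with $P = Y$ and $Q = cY$ gives $\|\mu(YcY)\|^{2} \leq \|\mu(Y^{2})\|\,\|\mu(Yc^{*}cY)\|$, and the operator inequality $Yc^{*}cY \leq \|c\|^{2}Y^{2}$ combined with positivity of $\mu$ yields $\|\mu(Yc^{*}cY)\| \leq \|c\|^{2}\sigma^{2}$, closing the estimate.

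The principal obstacle is to control the prefactor $S(b)^{-1}(1-\mu(X)b)^{-1}$ so that the final bound is not inflated past $\sigma^{2}/(\|b\|^{-1}-M)$, and simultaneously to argue that $H_{\mu}$ extends analytically throughout $D_{M^{-1}}$ even where $S(b)$ might fail to be invertible. The cleanest resolution of both difficulties is the operator-valued Nevanlinna representation $h_{\mu}(w) - \mu(X) = E[\gamma(w-\tilde{X})^{-1}\gamma^{*}]$ with $\gamma\gamma^{*} = \mu(X^{2})-\mu(X)^{2}$ and $\tilde{X}$ a bounded self-adjoint element of a dilation $C^{*}$-algebra satisfying $\|\tilde{X}\| \leq M$. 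Substituting $w = b^{-1}$ and using $(b^{-1}-\tilde{X})^{-1} = b(1-\tilde{X}b)^{-1}$, which is analytic in $b \in D_{M^{-1}}$ with $\|(b^{-1}-\tilde{X})^{-1}\| \leq \|b\|/(1-M\|b\|)$, delivers at once the analytic extension of $H_{\mu}$ to the full disk $D_{M^{-1}}$ and the inequality $\|H_{\mu}(b)-\mu(X)\| \leq \|\gamma\|^{2}\|b\|/(1-M\|b\|) = \sigma^{2}/(\|b\|^{-1}-M)$.
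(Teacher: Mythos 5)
Your final paragraph lands on the right idea, and it is in fact a reformulation of the paper's argument rather than a shortcut around it. The paper proves that on $B^{+}\cap D_{M^{-1}}$ (and by continuation on all of $D_{M^{-1}}$)
$$H_{\mu}(b)-\mu(\mathcal{X})=\sum_{n\geq 2}B_{\mu,n}(b,\ldots,b,1)=\sum_{n\geq 2}\langle b(Tb)^{n-2}\xi,\xi\rangle,$$
where $\xi=\mathcal{X}-\mu(\mathcal{X})$ and $T$ is the compression of multiplication by $\mathcal{X}$ to $B(\mathcal{X})_{0}$; summing the Neumann series gives exactly your $\langle b(1-Tb)^{-1}\xi,\xi\rangle$, so your $\tilde{X}$ is the paper's $T$, your $\gamma$ is $\xi$, and $\langle\xi,\xi\rangle=\mu(\mathcal{X}^{2})-\mu(\mathcal{X})^{2}$. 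The final bound is then $\Vert\xi\Vert^{2}\Vert b(1-Tb)^{-1}\Vert\leq \sigma^{2}\Vert b\Vert/(1-M\Vert b\Vert)$ once one knows $\Vert T\Vert\leq M$. So conceptually you and the paper agree.

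The problem is that you assert this representation and the sharp bound $\Vert\tilde{X}\Vert\leq M$ without deriving them, and those are precisely the non-routine parts of the lemma. The Popa--Vinnikov estimate only gives $\mu(P^{*}\mathcal{X}^{2}P)\leq 4M^{2}\mu(P^{*}P)$, i.e. $\Vert\tilde{X}\Vert\leq 2M$, which would yield the weaker denominator $\Vert b\Vert^{-1}-2M$. The sharpening to $M$ is the content of Lemma \ref{boundbXproof}, which you cite in your abandoned first approach but never actually tie to $\tilde{X}$; you need to run it through the $B$-valued inner product $\langle P,Q\rangle=\mu(Q^{*}P)$ on $B(\mathcal{X})_{0}$ to conclude that the compressed multiplication operator has norm $\leq M$, and similarly that left multiplication by $b$ has norm $\leq\Vert b\Vert$ (both via the $B$-valued Cauchy--Schwarz inequality). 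Likewise the representation itself needs to be derived, either via the Boolean cumulant expansion $B_{\mu,n}(b,\ldots,b,1)=\langle b(Tb)^{n-2}\xi,\xi\rangle$ from \cite{PV} as the paper does, or via a GNS dilation plus Schur-complement formula for the $(1,1)$-block of $(b^{-1}-X)^{-1}$ — but in either case it is a step that must be written out, not invoked. As for your first two paragraphs: you are right that the prefactor $S(b)^{-1}(1-\mu(X)b)^{-1}$ cannot be controlled by $1$ in norm and that $S(b)$ may fail to be invertible on all of $D_{M^{-1}}$, so that route genuinely does not close and should simply be removed from the write-up.
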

\begin{proof}
By \cite{PV}, the Boolean cumulant transform of $\mu$ is defined by $B_{\mu}(b)=1-F_{\mu}(b^{-1})b$ for $b\in B^{+}$. Therefore, $H_{\mu}(b)=B_{\mu}(b)b^{-1}$ for $b\in B^{+}$. The series expansion of $B_{\mu}$ holds for every $b$ in $D_{M^{-1}}$, and we have 
$$B_{\mu}(b)=\sum_{n\geq 1}B_{\mu,n}(b,\ldots,b),$$
where $B_{\mu,n}:B^{n}\to B$ are the non-commutative Boolean cumulants of $\mu$, which satisfy the right $B$-module property $B_{\mu}(b_{1},\ldots,b_{n})=B_{\mu}(b_{1},\ldots,b_{n-1},1)b$. Since $B_{\mu,1}(b)=\mu(\mathcal{X})b$, we have
$$H_{\mu}(b)=\left(\sum_{n\geq 1}B_{\mu,n}(b,\ldots,b)\right)b^{-1}=\left(\sum_{n\geq 1}B_{\mu,n}(b,\ldots,1)b\right)b^{-1}=\mu(\mathcal{X})+\sum_{n\geq 2} B_{\mu,n}(b,\ldots,b,1),$$
on $B^{+}\cap D_{M^{-1}}$, and by analytic continuation this equality holds on $D_{M^{-1}}$. 
Following \cite[Lemma 2.9]{PV}, we introduce on $B(\mathcal{X})$ the $B$-valued sesquilinear inner-product $\langle P,Q\rangle=\mu(Q^{*}P)$. Note that $\langle .,.\rangle$ satisfies the $B$-module condition $\langle Pb,Q\rangle=\langle P,Q\rangle b$ for $b\in B$. We equip $B(\mathcal{X})$ with the semi-norm $\Vert.\Vert$ coming from this $B$-valued inner product and from the norm of $B$: namely,
$$\Vert P\Vert=\Vert \langle P,P\rangle \Vert_{B}^{1/2}.$$
We recall the $B$-valued Cauchy-Schwartz inequality \cite[p. 3]{La} for $B$-valued sesquilinear inner-product,
$$\langle x,y\rangle\langle y,x\rangle \leq \Vert \langle y,y\rangle \Vert_{B} \langle x,x\rangle,$$
which yields the norm inequality
$$\Vert \langle x,y\rangle\Vert_{B}^{2} \leq \Vert \langle y,y\rangle \Vert_{B} \Vert\langle x,x\rangle\Vert_{B}.$$
Denote by $B(\mathcal{X})_{0}$ the complement of $1$ in $B(\mathcal{X})$: remark that when $P\in B(\mathcal{X})_{0}$, then $\langle 1,P\rangle=0$ and by the $B$-module structure of the inner product, $\langle b,P\rangle=0$ for all $b\in B$. By \cite[Proof of Theorem 2.5]{PV}, for $n\geq 2$ we have
$$B_{\mu,n}(b,\ldots,b,1)=\langle b(Tb)^{n-2}\xi,\xi\rangle,$$
where $\xi=\mathcal{X}-\mu(\mathcal{X})$ and $T:B(\mathcal{X})\to B(\mathcal{X})$ is defined by $T(b)=0$ for $b\in B$ and $T(P)=\mathcal{X}P-\mu(\mathcal{X}P)$ for $P\in B(\mathcal{X})_{0}$. By Lemma \ref{boundbXproof}, we have 
\begin{equation}\label{boundbX}
\langle bP,bP\rangle\leq \Vert b\Vert^{2}\langle P,P\rangle \quad \text{and}\quad \langle \mathcal{X}P,\mathcal{X}P\rangle \leq M^{2}\langle P,P\rangle,
\end{equation}
for all $b\in B$ and $P\in B(\mathcal{X})$, and where the inequality is understood in the lattice of selfadjoint elements of $B$. Let $P\in B(\mathcal{X})$. By the first inequality of \eqref{boundbX} and by the $B$-valued Cauchy-Schwartz inequality,
$$\Vert \langle bP,P\rangle \Vert_{B}^{2}\leq \Vert \langle bP,bP\rangle\Vert_{B}\Vert \langle P,P\rangle\Vert_{B}\leq \Vert b\Vert_{B}^{2}\Vert \langle P,P\rangle \Vert ^{2},$$
and the left multiplication by $b$ is a bounded linear map with bound $\Vert b\Vert$. Likewise, by the second inequality of \eqref{boundbX}, the left multiplication $\mathcal{X}$ is a bounded linear map on $B(\mathcal{X})$ with bound $M$. Let $P\in B(\mathcal{X})$ and write $P=b+P'$ with $b\in B$ and $P'\in B(\mathcal{X})_{0}$. Then,
$$\langle TP,TP\rangle=\langle \mathcal{X}P'-\mu(\mathcal{X}P'),\mathcal{X}P'-\mu(\mathcal{X}P')\rangle=\langle \mathcal{X}P',\mathcal{X}P'\rangle -\mu(\mathcal{X}P')^{*}\mu(\mathcal{X}P')\leq \langle \mathcal{X}P',\mathcal{X}P')\rangle.$$
Thus,
$$\Vert\langle TP,TP\rangle\Vert_{B}\leq \Vert \langle \mathcal{X}P',\mathcal{X}P')\rangle\Vert\leq M^{2}\Vert \langle P',P'\rangle\Vert\leq M^{2}\Vert \langle P,P\rangle\Vert.$$
Therefore, $T$ is also bounded by $M$. By the $B$-valued Cauchy-Schwartz inequality and by the above bounds,
$$\Vert B_{\mu,n}(b,\ldots,b,1)\Vert=\Vert\langle b(Tb)^{n-2}\xi,\xi\rangle\Vert\leq \Vert b(Tb)^{n-2}\xi\Vert \Vert \xi\Vert\leq \Vert b\Vert ^{n-1}(M)^{n-2}\Vert \xi\Vert^{2}.$$
Since $\Vert \langle \xi,\xi\rangle\Vert=\Vert\mu(\mathcal{X}^{2})-\mu(\mathcal{X})^{2}\Vert$, we conclude that for $\Vert b\Vert < M^{-1}$,
$$\Vert H_{\mu}(b)-\mu(\mathcal{X})\Vert\leq \sum_{n\geq 2} \Vert b\Vert^{n-1}M^{n-2}\Vert\mu(\mathcal{X}^{2})-\mu(\mathcal{X})^{2}\Vert\leq \frac{\Vert b\Vert}{1-M\Vert b\Vert }\Vert\mu(\mathcal{X}^{2})-\mu(\mathcal{X})^{2}\Vert.$$
\end{proof}

\section{Deconvolution in the scalar case}

\subsection{Additive deconvolution}
Let $\mu_{1},\mu_{3}\in\mathcal{P}_{2}(\mathbb{R})$, and suppose without loss of generality that $\mu_{1}$ is centered. 

Let us recall two particular functions used in Theorem \ref{MainSA1}:
\begin{itemize}
\item The $h$-transform of $\mu_{1}$ is the function $h_{1}:\mathbb{C}^{+}\to \mathbb{C}^{-}$ defined by 
$$h_{1}(w)=w-F_{\mu_{1}}(w).$$
\item The $\tilde{h}$-transform of $\mu_{3}$ is the function $\tilde{h}_{3}:\mathbb{C}^{+}\to \mathbb{C}^{+}$ defined by 
$$\tilde{h}_{3}(w)=F_{\mu_{3}}(w)+w.$$
\end{itemize}

We denote by $\sigma_{1}^{2}$ and $\sigma_{3}^{2}$ the respective variance of $\mu_{1}$ and $\mu_{3}$. For $z\in\mathbb{C}_{2\sqrt{2}\sigma_{1}}$, set $\alpha(z)=\frac{3\Im(z)}{4}$.

\begin{proposition}\label{suborAddScal}
For $z\in\mathbb{C}_{2\sqrt{2}\sigma_{1}}$, the function $T_{z}(w)=h_{1}(\tilde{h}_{3}(w)-z)+z$ is well defined and analytic on $\mathbb{C}_{\alpha(z)}$.

For any $w\in\mathbb{C}_{\alpha(z)}$, the iterated function $T_{z}^{\circ n}(w)$ converges to $w_{3}(z)\in \mathbb{C}_{\alpha(z)}$ which is the unique fixed point of $T_{z}$.
\end{proposition}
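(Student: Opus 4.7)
The plan is to verify the hypotheses of the Earl--Hamilton fixed point theorem (Theorem 2.4) for $T_z$ on the connected open set $D = \mathbb{C}_{\alpha(z)}$. Concretely, I need to check three things in order: (a) $T_z$ is well-defined and analytic on $\mathbb{C}_{\alpha(z)}$; (b) the image $T_z(\mathbb{C}_{\alpha(z)})$ is bounded in $\mathbb{C}$; (c) it stays at positive distance from the boundary line $\{\Im w = 3\Im z/4\}$. All three fall out of the elementary estimate \eqref{ineg_h_mu} applied to $h_1$, once we exploit the fact that $\tilde{h}_3$ \emph{doubles} the imaginary part.

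For (a), since $\Im F_{\mu_3}(w) \geq \Im w$ by \eqref{ImF}, we have $\Im \tilde{h}_3(w) \geq 2\Im w$ for $w \in \mathbb{C}^+$. If $w \in \mathbb{C}_{\alpha(z)}$, so that $\Im w > 3\Im z/4$, this gives
$$\Im(\tilde{h}_3(w) - z) \geq 2\Im w - \Im z > \tfrac{3}{2}\Im z - \Im z = \tfrac{1}{2}\Im z > 0,$$
so $\tilde h_3(w) - z \in \mathbb{C}^+$ and $h_1$ can be applied to it; the resulting $T_z$ is then analytic as a composition of analytic maps.

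For (b) and (c), I use that $\mu_1$ is centered, so the Jacobi parameters of $\mu_1$ satisfy $\beta_0 = 0$ and $\gamma_0 = \sigma_1^2$; hence \eqref{ineg_h_mu} yields $|h_1(u)| \leq \sigma_1^2/\Im u$ for every $u \in \mathbb{C}^+$. Substituting $u = \tilde{h}_3(w) - z$ and using the bound from step (a) gives
$$|T_z(w) - z| \;=\; |h_1(\tilde h_3(w) - z)| \;\leq\; \frac{\sigma_1^2}{\Im(\tilde h_3(w) - z)} \;\leq\; \frac{2\sigma_1^2}{\Im z},$$
so the image lies in the bounded disk $\overline D(z, 2\sigma_1^2/\Im z)$, proving (b). For (c), since $h_1$ maps $\mathbb{C}^+$ into $\mathbb{C}^- \cup \mathbb{R}$,
$$\Im T_z(w) \;=\; \Im z + \Im h_1(\tilde h_3(w) - z) \;\geq\; \Im z - \tfrac{2\sigma_1^2}{\Im z}.$$
The defining condition $\Im z > 2\sqrt{2}\,\sigma_1$ is precisely equivalent to $(\Im z)^2 > 8\sigma_1^2$, i.e.\ $2\sigma_1^2/\Im z < \Im z/4$; this is exactly what is needed to deduce
$$\Im T_z(w) \;>\; \Im z - \tfrac{\Im z}{4} \;=\; \tfrac{3\Im z}{4} \;=\; \alpha(z),$$
with a uniform positive margin $\Im z/4 - 2\sigma_1^2/\Im z$. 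This gives (c) and also shows that $T_z(\mathbb{C}_{\alpha(z)}) \subset \mathbb{C}_{\alpha(z)}$.

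Having verified the three conditions, Theorem 2.4 (Earl--Hamilton) immediately yields a unique fixed point $\omega_3(z) \in \mathbb{C}_{\alpha(z)}$ together with convergence of the iterates $T_z^{\circ n}(w) \to \omega_3(z)$ from every starting $w \in \mathbb{C}_{\alpha(z)}$. I don't foresee a real obstacle here — the entire argument is driven by the single inequality \eqref{ineg_h_mu} combined with the imaginary-doubling property of $\tilde h_3$, and the threshold $2\sqrt{2}\sigma_1$ is precisely the one that makes the contraction margin $\Im z/4 - 2\sigma_1^2/\Im z$ positive. The only subtlety worth double-checking is that we have used the \emph{un-shifted} Jacobi estimate, which requires the centering of $\mu_1$; this is the cost of assuming $\mu_1$ centered without loss of generality, and it is what allows the right-hand side $\sigma_1^2/\Im u$ to appear with no additive drift.
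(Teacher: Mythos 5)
Your argument is correct, and the estimates you derive are exactly the ones the paper uses: the lower bound $\Im(\tilde h_3(w)-z)>\Im z/2$, the bound $|T_z(w)-z|\le 2\sigma_1^2/\Im z$ from \eqref{ineg_h_mu} with $\beta_0=0$, and the resulting inclusion $T_z(\mathbb{C}_{\alpha(z)})\subset\mathbb{C}_{\alpha(z)}$ under the threshold $\Im z>2\sqrt2\,\sigma_1$. The one genuine divergence from the paper is the choice of fixed point theorem: you invoke Earl--Hamilton, whereas the paper invokes Denjoy--Wolff for the scalar case (reserving Earl--Hamilton for the operator-valued Propositions 4.1 and 4.5). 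Both routes are available here, since both theorems are stated in the preliminaries. Your route is a little more streamlined: the bounded-image estimate plus the uniform margin $\Im z/4 - 2\sigma_1^2/\Im z>0$ verify the Earl--Hamilton hypotheses directly and yield convergence to an interior fixed point in one stroke. The paper's Denjoy--Wolff route uses the bounded-image estimate \eqref{notSurjective} first to rule out $T_z$ being an automorphism, concludes that iterates converge to the Denjoy--Wolff point, and then needs a separate observation (that this point lies in $\overline{D(z,2\sigma_1^2/\Im z)}\subset\mathbb{C}_{\alpha(z)}$) to upgrade it to an interior fixed point. Either is fine; the content is identical, and your version arguably anticipates the operator-valued proof more cleanly.
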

\begin{proof}
Let $z\in\mathbb{C}_{2\sqrt{2}\sigma_{1}}$ and simply write $\alpha$ instead of $\alpha(z)$. Let us prove first that $T_{z}$ is well defined on $\mathbb{C}_{\alpha}$. Since $h_{1}$ is defined on $\mathbb{C}^{+}$, we just have to check that $\tilde{h}_{3}(w)-z\in \mathbb{C}^{+}$ for $w\in\mathbb{C}_{\alpha}$. Let $w\in\mathbb{C}_{\alpha}$. By the definition of $\alpha$, $\Im(w)> \frac{3\Im(z)}{4}$ and thus
\begin{equation}\label{lower_bound_argument_F1}
\Im(\tilde{h}_{3}(w)-z)=\Im(F_{\mu_{3}}(w)+w-z)\geq 2\frac{3\Im(z)}{4}-\Im(z)>\frac{\Im(z)}{2},
\end{equation}
where we have used in the second inequality that $\Im[F_{\mu_{3}}(w)]\geq \Im(w)$ for $w\in\mathbb{C}^{+}$.

In view of applying Denjoy-Wolff theorem, we prove now that $T_{z}(\mathbb{C}_{\alpha})\subset \overline{\mathbb{C}_{\alpha}}$. Let $w\in \mathbb{C}_{\alpha}$. Then, since $F_{\mu_{1}}$ is the $F$-transform of a centered probability measure having variance $\sigma_{1}^{2}$,
$$\vert F_{\mu_{1}}(x)-x\vert \leq \frac{\sigma_{1}^{2}}{\Im(x)},$$
for $x\in\mathbb{C}^{+}$, which yields
\begin{equation}\label{ineqImaginaryF}
\Im[F_{\mu_{1}}(x)]\leq \Im(x)+\frac{\sigma_{1}^{2}}{\Im(x)}.
\end{equation}
Using \eqref{lower_bound_argument_F1} and \eqref{ineqImaginaryF} applied to $x=\tilde{h}_{3}(w)-z$ we obtain 
$$\Im[F_{\mu_{1}}(\tilde{h}_{3}(w)-z)]\leq \Im[\tilde{h}_{3}(w)-z]+ \frac{\sigma_{1}^{2}}{\Im[\tilde{h}_{3}(w)-z]}<\Im[\tilde{h}_{3}(w)]-\Im(z)+ \frac{2\sigma_{1}^{2}}{\Im(z)}.$$
Hence, for  $z\in\mathbb{C}_{2\sqrt{2}\sigma_{1}},$
\begin{align*}
\Im[T_{z}(w)]=&\Im[h_{1}(\tilde{h}_{3}(w)-z)+z]\\
=&\Im[\tilde{h}_{3}(w)-F_{\mu_{1}}(\tilde{h}_{3}(w)-z)]\\
>& \Im(z)-\frac{2\sigma_{1}^{2}}{\Im(z)}\geq\frac{3\Im(z)}{4},
\end{align*}
where we used the inequality $t-\frac{2\sigma_{1}^{2}}{t}\geq \frac{3t}{4}$, valid for $t\geq2\sqrt{2}\sigma_{1}$. Thus we have proved that $T_{z}(w)\in \mathbb{C}_{\alpha},$ as desired.

Since $T_{z}(\mathbb{C}_{\alpha})\subset \mathbb{C}_{\alpha}$, we just have to prove that $T_{z}$ is not an automorphism of $\mathbb{C}_{\alpha}$ in order to apply Denjoy-Wolff Theorem. But, if $w\in \mathbb{C}_{\alpha}$,
$$\vert T_{z}(w)-z\vert=\vert h_{1}(\tilde{h}_{3}(w)-z)+z-z\vert =\vert F_{\mu_{1}}(\tilde{h}_{3}(w)-z)-(\tilde{h}_{3}(w)-z)\vert\leq \frac{\sigma_{1}^{2}}{\Im(\tilde{h}_{3}(w)-z)}.$$
Hence, by \eqref{lower_bound_argument_F1}, $\vert T_{z}(w)-z\vert< \frac{2\sigma_{1}^{2}}{\Im(z)}$ and 
\begin{equation}\label{notSurjective}
T_{z}(\mathbb{C}_{\alpha})\subset D\left(z,\frac{2\sigma_{1}^{2}}{\Im(z)}\right),
\end{equation}
where $D(z,\sigma_{1})$ is the disk with center $z$ and radius $\frac{2\sigma_{1}^{2}}{\Im(z)}$. Therefore, $T_{z}$ is not surjective and hence is not an automorphism of $\mathbb{C}_{\alpha}$. By Denjoy-Wolff Theorem, there exists $w_{3}(z)\in \overline{\mathbb{C}_{\alpha}}\cup\lbrace\infty\rbrace$ such that $T_{z}^{\circ n}(w)$ converges to $w_{3}(z)$ for all $w\in\mathbb{C}_{\alpha}$. By \eqref{notSurjective}, $w_{3}(z)\in \overline{D(z,\frac{2\sigma_{1}^{2}}{\Im(z)})}\subset \mathbb{C}_{\alpha}$ and thus $w_{3}(z)$ is a fixed point of $T_{z}$.
\end{proof}
\begin{remark}\label{optimalImaginary}
Without any additional property on $\mu_{1}$ and $\mu_{3}$, the constant $K$ is sharp. Indeed, if we only assume the inequality
$$\vert F_{\mu}(z)-z\vert\leq\frac{\sigma^{2}}{\Im(z)}$$
for distribution $\mu$ with finite variance $\sigma$, a computation yields that the stability condition $T_{z}(\mathbb{C}_{\alpha})\subset \overline{\mathbb{C}_{\alpha}}$ implies that $\alpha$ satisifies the inequality
$$(2\alpha-\Im(z))(\Im(z)-\alpha)-\sigma_{1}^{2}>0,$$
which is possible if and only if $\Im(z)^{2}>8\sigma_{1}^{2}$. 
\end{remark}

\begin{proposition}\label{analyAddScal}
The function $w_{3}$ is analytic on $\mathbb{C}_{2\sqrt{2}\sigma_{1}}$ and $\lim\limits_{n\rightarrow \infty}\frac{w_{3}(iy)}{iy}=1$. Moreover, we have
$$\phi_{\mu_{3}}\left[F_{\mu_{3}}\big(w_{3}(z)\big)\right]-\phi_{\mu_{1}}\left[F_{\mu_{3}}\big(w_{3}(z)\big)\right]=z-F_{\mu_{3}}(w_{3}(z))$$
for $z$ large enough.
\end{proposition}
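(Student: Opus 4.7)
For analyticity I plan to use a normal families argument. Fix a compact $K\subset \mathbb{C}_{2\sqrt{2}\sigma_{1}}$ and set $y_{K}=\sup_{z\in K}\Im z<\infty$. Choose any $w_{0}\in\mathbb{C}^{+}$ with $\Im w_{0}>3y_{K}/4$, so that $w_{0}\in \mathbb{C}_{\alpha(z)}$ uniformly in $z\in K$. Proposition~\ref{suborAddScal} then guarantees $T_{z}^{\circ n}(w_{0})\in\mathbb{C}_{\alpha(z)}$ for all $n$ and $T_{z}^{\circ n}(w_{0})\to w_{3}(z)$ pointwise on $K$. Each function $z\mapsto T_{z}^{\circ n}(w_{0})$ is holomorphic on a neighborhood of $K$ (inductively: $T_{z}(w)=h_{1}(\tilde h_{3}(w)-z)+z$ is jointly analytic where defined, and the iteration domain $\mathbb{C}_{\alpha(z)}$ is preserved by the stability proved in Proposition~\ref{suborAddScal}); the bound $|T_{z}(w)-z|<2\sigma_{1}^{2}/\Im z$ from that proof makes the sequence uniformly bounded on $K$. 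Vitali's theorem then promotes pointwise to uniform convergence on $K$, so $w_{3}$ is holomorphic on $K$, hence on all of $\mathbb{C}_{2\sqrt{2}\sigma_{1}}$.

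For the limit $w_{3}(iy)/(iy)\to 1$ I would apply the same inequality to the fixed point itself, obtaining $|w_{3}(z)-z|=|T_{z}(w_{3}(z))-z|\leq 2\sigma_{1}^{2}/\Im z$. Setting $z=iy$ and letting $y\to\infty$ gives the asymptotic at once.

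For the functional equation, set $u:=F_{\mu_{3}}(w_{3}(z))$. Substituting $\tilde h_{3}(w)=F_{\mu_{3}}(w)+w$ and $h_{1}(x)=x-F_{\mu_{1}}(x)$ into $T_{z}(w_{3}(z))=w_{3}(z)$ and cancelling $w_{3}(z)$ rearranges as
$$F_{\mu_{1}}\bigl(u+w_{3}(z)-z\bigr)=u.$$
For $\Im z$ large enough the estimate just proved forces both $w_{3}(z)$ and $u$ to have arbitrarily large imaginary part, so $w_{3}(z)\in\mathbb{C}_{\sigma_{1}}\cap\mathbb{C}_{\sigma_{3}}$ and $u\in\mathbb{C}_{2\sigma_{1}}\cap\mathbb{C}_{2\sigma_{3}}$. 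Maassen's lemma (recalled in Section~\ref{convolutions}) then lets me invert $F_{\mu_{1}}$ and $F_{\mu_{3}}$, yielding $F_{\mu_{1}}^{<-1>}(u)=u+w_{3}(z)-z$ and $F_{\mu_{3}}^{<-1>}(u)=w_{3}(z)$. Translating into Voiculescu transforms via $\phi_{\mu_{i}}(v)=F_{\mu_{i}}^{<-1>}(v)-v$ gives $\phi_{\mu_{1}}(u)=w_{3}(z)-z$ and $\phi_{\mu_{3}}(u)=w_{3}(z)-u$, and subtracting produces exactly $\phi_{\mu_{3}}(u)-\phi_{\mu_{1}}(u)=z-u=z-F_{\mu_{3}}(w_{3}(z))$.

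The main obstacle I foresee is the analyticity step, specifically making precise that the iterates $T_{z}^{\circ n}(w_{0})$ are jointly holomorphic on neighborhoods of an arbitrary compact $K$, which requires choosing $w_{0}$ deep enough in $\mathbb{C}^{+}$ so that the full trajectory stays in the analyticity domain uniformly over $K$. Once Vitali applies, the asymptotic at infinity and the functional identity reduce to algebraic manipulations of the fixed-point equation together with the Maassen inversion lemma.
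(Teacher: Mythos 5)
Your proof is correct, and the second and third parts (the asymptotic $w_3(iy)/(iy)\to 1$ and the identity relating $\phi_{\mu_1}$, $\phi_{\mu_3}$) are essentially the same argument as the paper's: you use the radius bound $|T_z(w)-z|\le 2\sigma_1^2/\Im z$ applied to the fixed point, and you unwind $T_z(w_3(z))=w_3(z)$ to get $F_{\mu_1}(u+w_3(z)-z)=u$ with $u=F_{\mu_3}(w_3(z))$, then invoke Maassen's inversion lemma; the paper just packages the same computation by introducing $w_1(z)=\tilde h_3(w_3(z))-z$ and subtracting (its equation \eqref{eq_free_additive}), which is your identity in disguise.

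The one place you genuinely diverge is the analyticity of $w_3$. The paper dispatches it in one line by citing \cite[Thm.~2.3]{BB}, whereas you give a self-contained normal-families (Vitali/Montel) argument: the iterates $z\mapsto T_z^{\circ n}(w_0)$ are jointly holomorphic near any compact $K$ once $w_0$ is chosen deep enough to dominate $\alpha(z)$ on a neighborhood of $K$, and they are uniformly bounded thanks to $|T_z(w)-z|<2\sigma_1^2/\Im z$. This is a sound and in fact more transparent route; it is also exactly the method the paper itself uses in the operator-valued analogue (Proposition \ref{analyAddOp}), so your version makes the scalar and operator-valued treatments uniform. The only thing worth stating more explicitly is that the uniform bound applies to $T_z^{\circ n}(w_0)$ for $n\geq1$ because each previous iterate lies in $\mathbb{C}_{\alpha(z)}$ (so the radius bound is legitimately applied at each step), while $n=0$ is just the fixed initial point; you gesture at this but it is the crux of the Vitali hypothesis. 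With that spelled out, the argument is complete.
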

\begin{proof}
The analiticity of $w_3$ follows from Theorem 2.3 in \cite{BB}. Now, for $z\in \mathbb{C}_{2\sqrt{2}\sigma_{1}^{2}}$, the fact that $w_{3}(z)$ is a fixed point of $T_{z}$ implies that it is in  $\mathbb{C}_{\alpha(z)}$ which yields that
$\Im[ w_{3}(z)]> 3/4\Im(z)$. Therefore,  \eqref{lower_bound_argument_F1} yields that 
$$\Im[\tilde{h}_{3}(w_{3}(yi))-yi]\geq 3y/4$$
for $y>0$. Hence, since $w_{3}(yi))=T_{yi}(w_{3}(yi))$,
$$\vert w_{3}(yi)-yi\vert = \vert T_{yi}(w_{3}(yi))-yi\vert=\vert h_{1}(\tilde{h}_{3}(w_{3}(yi))-yi)\vert \leq\frac{\sigma_{1}^{2}}{3y/4},$$
and 
$$\lim\limits_{n\rightarrow \infty}\frac{w_{3}(iy)}{iy}=1.$$

By Section \ref{convolutions}, $\phi_{\mu_{1}}$, $\phi_{\mu_{2}}$ and $\phi_{\mu_{3}}$ are well-defined on $\mathbb{C}_{2\sigma_{3}}$. Let $z\in\mathbb{C}_{4\sigma_{3}}$, so that $\Im[ w_{3}(z)]>2\sigma_{3}$. Since $\Im[F_{\mu_{3}}(w)]\geq \Im(w)$ for $w\in\mathbb{C}^{+}$, we thus also have
$$\Im[F_{\mu_{3}}(w_{3}(z))]\geq w_{3}(z)> 2\sigma_{3}$$
for $z\in\mathbb{C}_{4\sigma_{3}}$, so that $\phi_{\mu_{1}}$ and $\phi_{\mu_{3}}$ are well-defined on $F_{\mu_{3}}(w_{3}(z))$ for $z\in\mathbb{C}_{4\sigma_{3}}$.
For $z\in\mathbb{C}_{4\sigma_{3}}$, set 
\begin{equation}\label{definitionw_1}
w_{1}(z)=\tilde{h}_{3}\big(w_{3}(z)\big)-z=F_{\mu_{3}}\big(w_{3}(z)\big)+w_{3}(z)-z.
\end{equation} 
Since $w_{3}(z)$ is a fixed point of $T_{z}$, we have 
\begin{align*}
F_{\mu_{1}}(w_{1}(z))=&-h_{1}(w_{1}(z))+w_{1}(z)\\
=&-h_{1}\left(\tilde{h}_{3}\big(w_{3}(z)\big)-z\right)+\tilde{h}_{3}\big(w_{3}(z)\big)-z\\
=&-T_{z}\big(w_{3}(z)\big)+z+\tilde{h}_{3}\big(w_{3}(z)\big)-z\\
=&-w_{3}(z)+F_{\mu_{3}}\big(w_{3}(z)\big)+w_{3}(z)=F_{\mu_{3}}\big(w_{3}(z)\big),
\end{align*}
so that
\begin{align}
\left[w_{1}(z)-F_{\mu_{1}}\big(w_{1}(z)\big)\right]+\left[z-F_{\mu_{3}}\big(w_{3}(z)\big)\right]
=&\tilde{h}_{3}(w_{3}(z))-z+z-2F_{\mu_{3}}\big(w_{3}(z+i\sigma_{1})\big)\nonumber\\
=&w_{3}(z)-F_{\mu_{3}}\big(w_{3}(z)\big).\label{eq_free_additive}
\end{align} 
For $z\in\mathbb{C}_{4\sigma_{3}}$, $w_{3}(z)\in\mathbb{C}_{2\sigma_{3}}$, and by  \cite[Lemma 24]{MS} $F_{\mu_{3}}^{<-1>}[F_{\mu_{3}}(w_{3}(z))]=w_{3}(z)$. Therefore,
\begin{align*}
w_{3}(z)-F_{\mu_{3}}\big(w_{3}(z)\big)=&F_{\mu_{3}}^{<-1>}\left[F_{\mu_{3}}\big(w_{3}(z)\big)\right]-F_{\mu_{3}}\big(w_{3}(z)\big)\\
=&\phi_{\mu_{3}}\left[F_{\mu_{3}}\big(w_{3}(z)\big)\right].
\end{align*}
Likewise, since $w_{1}(z)\in\mathbb{C}_{2\sigma_{3}}\subset \mathbb{C}_{2\sigma_{1}}$,
\begin{align*}
w_{1}(z)-F_{\mu_{1}}\big(w_{1}(z)\big)=&F_{\mu_{1}}^{<-1>}\left[F_{\mu_{1}}\big(w_{1}(z)\big)\right]-F_{\mu_{1}}\big(w_{1}(z)\big)\\
=&\phi_{\mu_{1}}\left[F_{\mu_{1}}\big(w_{1}(z)\big)\right]=\phi_{\mu_{1}}\left[F_{\mu_{3}}\big(w_{3}(z)\big)\right].
\end{align*}
Therefore,
$$\phi_{\mu_{3}}\left[F_{\mu_{3}}\big(w_{3}(z)\big)\right]-\phi_{\mu_{1}}\left[F_{\mu_{3}}\big(w_{3}(z)\big)\right]=z-F_{\mu_{3}}\big(w_{3}(z)\big).$$
\end{proof}
We can now turn to the proof of Theorem \ref{MainSA1} and Theorem \ref{MainSA2}.
\begin{proof}[Proof of Theorem \ref{MainSA1}]
By Proposition \ref{suborAddScal}, $\Im\omega_{3}(z)\geq 3\Im z/4$. By \eqref{definitionw_1}, $\omega_{1}$ is defined as 
$$\omega_{1}=F_{\mu_{3}}(\omega_{3}(z))+\omega_{3}(z)-z.$$
Hence, the fact that $\Im F_{\mu_{3}}(w)\geq \Im w$ for $w\in \mathbb{C}^{+}$ yields that $\Im\omega_{1}(z)\geq \Im z/2$.
The last part of statement (1) is given by Proposition \ref{analyAddScal} for $\omega_{3}$, and is deduced by \eqref{definitionw_1} and the fact that $\lim\limits_{y\rightarrow \infty}\frac{F_{\mu_{3}}(yi)}{yi}=1$ for $\omega_{1}$.

For the second statement, suppose that there exists $\mu_{2}\in \mathcal{P}_{2}(\mathbb{R})$ such that $\mu_{1}\boxplus \mu_{2}=\mu_{3}$. Then, by the first statement, $F_{\mu_{3}}\big(w_{3}(z)\big)$ goes to infinity when $z$ goes to infinity along $i\mathbb{R}_{\geq 0}$. Hence, $\phi_{\mu_{2}}(F_{\mu_{3}}\big(w_{3}(z)\big)$ is well-defined for $z\in i\mathbb{R}_{\geq 0}$ large enough. Moreover, by the equality $\mu_{1}\boxplus\mu_{2}=\mu_{3}$ and by Proposition \ref{analyAddScal},
$$\phi_{\mu_{2}}(F_{\mu_{3}}\big(w_{3}(z)\big)=\phi_{\mu_{3}}(F_{\mu_{3}}\big(w_{3}(z)\big)-\phi_{\mu_{1}}(F_{\mu_{3}}\big(w_{3}(z)\big)=z-F_{\mu_{3}}(w_{3}(z)).$$
Since $\phi_{\mu_{2}}(w)=F_{\mu_{2}}^{<-1>}(w)-w$ on its domain of definition, the above equality yields
$$F_{\mu_{2}}^{<-1>}(F_{\mu_{3}}\big(w_{3}(z)\big))=z,$$
and thus $F_{\mu_{2}}(z)=F_{\mu_{3}}\big(w_{3}(z)\big)$ for $z\in i\mathbb{R}_{\geq 0}$ large enough. By Proposition \ref{analyAddScal}, $F_{\mu_{3}}\circ w_{3}$ is analytic on its domain of definition. Since $F_{\mu_{2}}$ is also analytic and coincides with $F_{\mu_{3}}\circ w_{3}$ in a set which is not discrete, the two functions are equal on the intersection of their domains of definition, which is $\mathbb{C}_{2\sqrt{2}\sigma_{1}}$.
Statement (3) is the definition of $\omega_{1}$ in \eqref{definitionw_1}, and statement (4) is the content of Proposition \ref{suborAddScal}.
\end{proof}

\begin{proof}[Proof of Theorem \ref{MainSA2}]
Set $\tilde{F}_{2}(z)=F_{\mu_{3}}(w_{3}(z+2\sqrt{2}\sigma_{1}i))$. Since $w_{3}$ is defined on $\mathbb{C}_{2\sqrt{2}\sigma_{1}}$, $\tilde{F}_{2}$ is a well-defined function from $\mathbb{C}^{+}$ to $\mathbb{C}^{+}$. Moreover, by Proposition \ref{analyAddScal},
$$\lim\limits_{n\rightarrow \infty} \frac{w_{3}(iy)}{iy}=1,$$
which implies 
$$ \lim\limits_{n\rightarrow \infty} \frac{w_{3}(iy+2\sqrt{2}\sigma_{1}i)}{iy}=1.$$
Since $F_{\mu_{3}}$ satisfies also the asymptotic behavior $\lim\limits_{y\rightarrow \infty}\frac{F_{\mu_{3}}(yi)}{yi}=1$, we finally get 
$$\lim\limits_{n\rightarrow \infty} \frac{\tilde{F}_{2}(iy)}{iy}=1,$$
so that by Nevanlinna representation theorem, there exists a probability measure $\tilde{\mu}\in\mathcal{P}(\mathbb{R})$ such that $\tilde{F}_{2}=F_{\tilde{\mu}}$. By definition of $\tilde{F}$, for $z$ large enough,
$$F_{\tilde{\mu}}^{<-1>}(F_{\mu_{3}}(w_{3}(z)))=z-2\sqrt{2}\sigma_{1}i.$$
Hence, by Proposition \ref{analyAddScal}, for $z$ large enough we have
\begin{align*}
\phi_{\mu_{3}}(F_{\mu_{3}}\big(w_{3}(z)\big))-\phi_{\mu_{1}}(F_{\mu_{3}}\big(w_{3}(z)\big)=z-F_{\mu_{3}}(w_{3}(z))=\phi_{\tilde{\mu}}(F_{\mu_{3}}\big(w_{3}(z)\big))+2\sqrt{2}\sigma_{1}i.
\end{align*}
Since $F_{\mathcal{C}_{2\sqrt{2}\sigma_{1}}}(z)=z+2\sqrt{2}\sigma_{1}$, we have $\phi_{\mathcal{C}_{2\sqrt{2}\sigma_{1}}}=-2\sqrt{2}\sigma_{1}i$, so that 
$$\phi_{\mu_{3}}(F_{\mu_{3}}\big(w_{3}(z)\big))+\phi_{\mathcal{C}_{2\sqrt{2}\sigma_{1}}}(F_{\mu_{3}}\big(w_{3}(z)\big))=\phi_{\mu_{1}}(F_{\mu_{3}}\big(w_{3}(z)\big)+\phi_{\tilde{\mu}}(F_{\mu_{3}}\big(w_{3}(z)\big))$$
for $z$ large enough. We deduce that 
$$\mu_{1}\boxplus \tilde{\mu}=\mu_{3}+\mathcal{C}_{2\sqrt{2}\sigma_{1}}.$$
\end{proof}

\subsection{Multiplicative deconvolution}
Let $\mu_{1},\mu_{2},\mu_{3}\in\mathcal{P}_{2}(\mathbb{R})$ be such that $\mu_{1}$ admits moments of order four and has support on $[0,+\infty[$ (with $\mu_{1}\not =\delta_{0}$), and such that $\mu_{3}$ admits moments of order two and has non-zero first moment. 

This subsection is dedicated to the free multiplicative convolution 
\begin{equation}\label{deconvoMultScal}
\mu_{1}\boxtimes \mu_{2} =\mu_{3},
\end{equation}
and the objective is to recover the Cauchy transform of $\mu_{2}$ from the ones of $\mu_{1}$ and $\mu_{3}$. Up to a rescaling of $\mu_{1}$ and $\mu_{3}$, we can assume that the first moment of $\mu_{1}$ and $\mu_{3}$ are equal to $1$. Following Section \ref{expansionjacobi}, we denote by $\beta_{1},\gamma_{1}$ the second Jacobi parameters of $\mu_{1}$ of respectively first and second order, and we set $R=2\sqrt{\gamma_{1}}\vee \beta_{1}$. Set
$$K=\left[6\big(2\sigma_{1}^{2}+\sqrt{5\sigma_{1}^{4}+2\sigma_{3}^{2}\sigma_{1}^{2}}\big)\right]\vee \left[R+3/2\sqrt{R^{2}+4R\sigma_{3}^{2}}\right].$$

For $z\in\mathbb{C}_{K}$, set $r_{z}=\frac{\Im(z)}{5\vert z\vert}$, and define the function 
$$T_{z}(w)=h_{1}\left(z(1+w)^{2}h_{3}[z(1+w)]^{-1}\right)-1$$
on $\Delta_{z}:=D(0,r_{z})$.

\begin{proposition}\label{fixedPointMultiplicatif}
The map $T_{z}$ is well-defined on $\Delta_{z}$, and for all $w\in\Delta_{z}$ we have
$$\lim\limits_{n\rightarrow \infty}T_{z}^{\circ n}(w)=\tilde{w}_{3}(z)$$
for some $\tilde{w}_{3}(z)$ independent of the original choice of $w$ such that $T_{z}(\tilde{w}_{3}(z))=\tilde{w}_{3}(z)$. Moreover, $\tilde{w}_{3}(z)$ goes to zero as $\Im(z)$ goes to infinity.
\end{proposition}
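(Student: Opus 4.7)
The strategy is to mimic the additive argument (Proposition \ref{suborAddScal}) by showing that the iteration $T_z$ sends the disk $\Delta_z = D(0,r_z)$ into itself strictly, after which the Denjoy–Wolff theorem (applied via a conformal identification of $\Delta_z$ with $\mathbb{D}$) produces a unique attracting fixed point.

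First I would verify well-definedness. For $w\in\Delta_z$, the inequality $|zw|\le |z|r_z=\Im z/5$ gives $\Im(z(1+w))\ge \tfrac{4}{5}\Im z>0$, so $\zeta:=z(1+w)\in\mathbb{C}^+$ and $h_3(\zeta)$ is defined. Since $\mu_3$ has first moment $\beta_0=1$ and variance $\sigma_3^2$, the Jacobi-continued-fraction estimate \eqref{ineg_h_mu} yields
$$|h_3(\zeta)-1|\le \sigma_3^2/\Im\zeta\le 5\sigma_3^2/(4\Im z),$$
which, for $\Im z>K$, is small enough to force $|h_3(\zeta)|\ge 1/2$ and to make the quotient $u:=z(1+w)^2/h_3(\zeta)$ lie in $\mathbb{C}^+$ (this last point follows from writing $u=\zeta(1+w)/h_3(\zeta)$ and using that $\Im h_3(\zeta)\le 0$, since $\Im F_{\mu_3}(\zeta)\ge \Im\zeta$). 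Thus $h_1(u)$ is defined and $T_z$ is analytic on $\Delta_z$.

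The main step, and the main obstacle, is the self-map property $T_z(\Delta_z)\subset\Delta_z$. A crude bound $|h_1(u)-1|\le\sigma_1^2/\Im u$ is insufficient: since $|z|$ may be much larger than $\Im z$, one needs an estimate of order $1/|u|$ rather than $1/\Im u$. Here the assumption that $\mu_1$ has positive support and four moments becomes essential. By Hasebe's result mentioned in Section 2, the measure $\nu$ appearing in the Jacobi expansion of $\mu_1$ still has positive support, so its own first Jacobi coefficient $\beta_1$ is nonnegative and one can write
$$h_1(u)-1=\frac{\sigma_1^2}{u-\beta_1-\gamma_1 G_{\nu'}(u)},$$
with $|\gamma_1 G_{\nu'}(u)|\le\gamma_1/\Im u$. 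Combining $|u-\beta_1|\ge |u|-\beta_1$ with the lower bound $|u|\ge c|z|$ (which follows from $|1+w|\ge 4/5$ and $|h_3(\zeta)|\le 3/2$), one obtains a bound of the form $|h_1(u)-1|\le C\sigma_1^2/|z|$ once $|z|$ exceeds a multiple of $R$. The precise expression for $K$ in Theorem \ref{ThmScalMultiplicatif} is calibrated exactly so that this bound is $\le r_z=\Im z/(5|z|)$ and the first part of the $\max$ simultaneously handles the contribution of the $h_3$-perturbation to $\Im u$; this explains both terms of $K$.

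Once $T_z(\Delta_z)$ is shown to lie in a disk $D(0,\rho)$ with $\rho<r_z$, $T_z$ is not an automorphism of $\Delta_z$, and the Denjoy–Wolff theorem (applied after a Möbius identification of $\Delta_z$ with $\mathbb{D}$) produces a unique Denjoy–Wolff point $\tilde w_3(z)$ to which every orbit converges. Since the image is precompact in $\Delta_z$, this point lies in the interior and is an actual fixed point of $T_z$. Finally, the bound $|\tilde w_3(z)|\le C\sigma_1^2/|z|\le C\sigma_1^2/\Im z$ obtained in the previous step gives $\tilde w_3(z)\to 0$ as $\Im z\to\infty$, concluding the proposition.
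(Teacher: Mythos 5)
Your overall strategy is the same as the paper's: use the positivity of $\mu_{1}$'s support together with Hasebe's result to push the Jacobi continued fraction one step further, obtain a bound on $|h_{1}(\delta)-1|$ that scales like $1/|z|$ rather than $1/\Im z$, show that $T_z$ maps $\Delta_z$ strictly into itself, and invoke Denjoy--Wolff. This matches the paper's proof in spirit and in most of its mechanics, so the proposal is basically on track.

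There is, however, one step that fails as written: your claim that $u:=z(1+w)^{2}/h_{3}(\zeta)$ lies in $\mathbb{C}^{+}$. Both $z(1+w)^{2}$ and $1/h_{3}(\zeta)$ have arguments in $(0,\pi)$, so the argument of the product lies in $(0,2\pi)$, but it can exceed $\pi$. Concretely, if $|\Re z|\gg \Im z$ (so that $\arg(z(1+w)^{2})$ is already close to $\pi$) and $\Im h_{3}(\zeta)\ne 0$, the small positive argument contributed by $1/h_{3}(\zeta)$ can push $u$ into $\mathbb{C}^{-}$. Since $\Im z>K$ bounds $\Im z$ but not $|z|$, this regime is not excluded. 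The correct and sufficient statement, which is what the paper proves, is only that $u\in\mathbb{C}\setminus[0,\infty[$; this is enough for $h_{1}(u)$ to be defined because $\mu_{1}$ is supported in $[0,\infty[$. The flaw then propagates: your estimate $|\gamma_{1}G_{\nu'}(u)|\le\gamma_{1}/\Im u$ is meaningless if $u\in\mathbb{C}^{-}$. The paper replaces $\Im u$ by $d(u,[0,\infty[)$ throughout, and establishes the lower bound on $d(\delta,[0,\infty[)$ by a case split on the sign of $\Re\delta$ (using $|\Im\delta|$ in one case and $|\delta|$ in the other). You need the same replacement, and the same case split, for the argument to go through. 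The rest of the proposal --- the bound $|u|\gtrsim|z|$, the use of $d(\delta,[0,\infty[)\geq\beta_{1}$ to control $|\delta|/|\delta-\beta_{1}|$, and the appeal to Denjoy--Wolff after exhibiting a strict contraction --- is sound, although the verification that the two terms of $K$ make the inequalities hold is asserted rather than carried out (the paper computes the roots of the two resulting quadratics explicitly).
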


\begin{proof}
Let us write $r_{z}=\frac{t\Im (z)}{\vert z\vert}$  with $t$ varying for now; we will show below that the result holds for $t=\frac{1}{5}$.

Let us first prove that $T_{z}$ is well-defined. Set $I=\Im(z)$. Since the support of $\mu_{1}$ is included in $[0,\infty[$, the domain of definition of $h_{1}$ is $\mathbb{C}\setminus[0,\infty[$. On the one hand,
$$\Im([z(1+w)])=I+\Im(zw)>I(1-t),$$
where we have used the fact that $\vert w\vert\leq \frac{tI}{\vert z\vert}$ in the last inequality. By the definition of $h_{3}$ and \eqref{ineg_h_mu}, the latter inequality yields that $h_{3}[z(1+w)]\in \mathbb{C}^{-}$ and $\vert h_{3}[z(1+w)]-1\vert \leq \frac{\sigma_{3}^{2}}{(1-t)I}$. Hence, we have
\begin{equation}\label{study_h3}
h_{3}[z(1+w)]^{-1}=\frac{1}{1+u}\quad\text{ with } u\in\mathbb{C}^{-},\vert u\vert \leq \frac{\sigma_{3}^{2}}{(1-t)I}.
\end{equation}
On the other hand, since $w\in \Delta_{z}$, we have $z(1+w)^{2}=z+\tilde{u}$ with 
$$\vert \tilde{u}\vert\leq 2\epsilon+\frac{\epsilon^{2}}{\vert z\vert}\leq 2tI+\frac{t^{2}I^{2}}{I}\leq  (2t+t^{2})I.$$
Hence, we have
$$\vert z(1+w)^{2}\vert\geq (1-2t-t^{2})\vert z\vert $$
because $\vert z\vert \geq \Im(z)= I$, and
$$\Im(z(1+w)^{2})\geq (1-2t-t^{2})I.$$
In particular, $z(1+w)^{2}\in\mathbb{C}^{+}$. Since $h_{3}[z(1+w)]^{-1}\in\mathbb{C}^{+}$ by \eqref{study_h3}, we get finally that 
$$z(1+w)^{2}h_{3}[z(1+w)]^{-1}\in\mathbb{C}\setminus [0,\infty[,$$
and $T_{z}$ is well defined on $\Delta_{z}$. 

Set $\delta=z(1+w)^{2}h_{3}[z(1+w)]^{-1}$. If $\Re(\delta)\geq 0$, $d(\delta,[0,+\infty[)=\vert\Im ( \delta)\vert $. Since $h_{3}[z(1+w)]^{-1}\in\mathbb{C}^{+}$, the condition $-\pi/2\leq\arg(\delta)\leq \pi/2$ implies that $\vert \arg(\delta)\vert\geq \arg(z(1+w)^{2}))$. By \eqref{study_h3},
$$\vert\Im  (\delta)\vert\geq \vert h_{3}[z(1+w)]^{-1}\vert  \Im(z(1+w)^{2}))\geq \frac{(1-2t-t^{2})I}{1+\sigma_{3}^{2}/[(1-t)I]},$$
which yields 
$$d(\delta,[0,+\infty[)\geq \frac{I^{2}(1-t)(1-2t-t^{2})}{(1-t)I+\sigma_{3}^{2}}:=F(t,I).$$ 
If $\Re(\delta)\geq 0$, $d(\delta,\infty) =\vert\delta\vert $. Moreover, using again \eqref{study_h3} yields
\begin{equation}\vert \delta\vert \geq  \frac{\vert z\vert I(1-t)(1-2t-t^{2})}{(1-t)I+\sigma_{3}^{2}}:=\vert z\vert G(t,I),
\end{equation}
and, since $\vert z\vert\geq I$, we get also $d(\delta,[0,+\infty[)\geq F(t,I)$. We suppose now that $t,I$ are such that 
\begin{equation}\label{lower_bound_delta}F(t,I)\geq(2\sqrt{\gamma_{1}}\vee \beta_{1}),\quad G(t,I)> \frac{8\sigma_{1}^{2}}{3rtI}
\end{equation}
for some $0<r<1$. By Section \ref{scalarTransform},
$$\vert h_{1}(\delta)-1\vert=\left\vert \frac{\sigma_{1}^{2}}{\delta-\beta_{1}-\gamma_{1} G_{\nu}(\delta)}\right\vert\leq \left\vert \frac{\sigma_{1}^{2}}{\vert\delta-\beta_{1}\vert-\vert\gamma_{1} G_{\nu}(\delta)\vert}\right\vert ,$$
with $\nu$ a probability measure supported on $[0,\infty[$. On the one hand, since $\beta_{1}\geq 0$, $\vert\delta-\beta_{1}\vert\geq d(\delta,[0,+\infty[)$. On the other hand, since $\nu$ is supported on $[0,\infty[$, $\vert\gamma_{1} G_{\nu}(\delta)\vert\leq \frac{\gamma_{1}}{d(\delta,[0,+\infty[)}$. By \eqref{lower_bound_delta}, $d(\delta,[0,+\infty[)\geq 2\sqrt{\gamma_{1}}$, and thus 
$$\vert\gamma_{1} G_{\nu}(\delta)\vert\leq \frac{d(\delta,[0,+\infty[)}{4}\leq \frac{\vert \delta-\beta_{1}\vert}{4}.$$
Hence,
$$\vert h_{1}(\delta)-1\vert\leq \frac{4\sigma_{1}^{2}}{3\vert \delta-\beta_{1}\vert}\leq \frac{4\sigma_{1}^{2}}{3\vert z\vert}\frac{\vert z\vert}{\vert \delta\vert}\frac{\vert \delta\vert}{\vert \delta-\beta_{1}\vert}.$$
Since $d(\delta,[0,+\infty[)\geq \beta_{1}$ by the first inequality of \eqref{lower_bound_delta}, a geometric argument yields that $\frac{\vert \delta\vert}{\vert \delta-\beta_{1}\vert}\leq 2$. Hence, the second inequality of \eqref{lower_bound_delta} yields
$$\vert h_{1}(\delta)-1\vert\leq \frac{8\sigma_{1}^{2}}{3\vert z\vert}\frac{\vert z\vert}{\vert \delta\vert}< r\frac{tI}{\vert z\vert},$$
so that $T_{z}(w)\in r\Delta_{z}$ for some $0<r<1$. Hence, conditioned on the fact that $t,I$ satisfy \eqref{lower_bound_delta}, $T_{z}$ is an analytic map which is a strict contraction of $\Delta_{z}$, and Denjoy-Wolff theorem yields that for all $w\in \Delta_{z}$, $T_{z}^{\circ n}(z)$ converges to the unique fixed point of $T_{z}$ in $\Delta_{z}$. Let $t=\frac{1}{5}$. Then, $I$ satisfies the first inequality of \eqref{lower_bound_delta} if 
$$I^{2}(\frac{4}{5}.\frac{14}{25})-\left(\frac{4}{5}I+\sigma_{3}\right)R\geq 0$$
with $R=(2\sqrt{\gamma_{1}}\vee\beta_{1})$. The two roots of the above second degree polynomials are 
$$x_{\pm}=\frac{25}{28}R\pm \frac{125}{112}\sqrt{(4/5)R^{2}+4\frac{112}{125}R\sigma_{3}^{2}}.$$
Since $K\geq [R+3/2\sqrt{R^{2}+4R\sigma_{3}^{2}}]$, for $I\geq K$ we have $I\geq x_{+}$ and the first inequality of \eqref{lower_bound_delta} is satisfied. Second, $I$ satisfies the second inequality of \eqref{lower_bound_delta} if and only if 
$$t(1-t)(1-2t-t^{2})I^{2}-\frac{8}{3}(1-t)\sigma_{1}^{2}I-\frac{8}{3}\sigma_{1}^{2}\sigma_{3}^{2}> 0,$$
with $t=\frac{1}{5}$. The two roots of this second degree polynomial are
$$x_{\pm}=\frac{625}{8\times 14}\left[\frac{32}{15}\sigma_{1}^{2}\pm\sqrt{\left(\frac{32}{15}\sigma_{1}^{2}\right)^{2}+\frac{16\times 15}{25^{2}}\frac{8}{3}\sigma_{1}^{2}\sigma_{3}^{2}}\right].$$
Since $K\geq 6\big(2\sigma_{1}^{2}+\sqrt{5\sigma_{1}^{4}+2\sigma_{3}^{2}\sigma_{1}^{2}}\big)$, for $I\geq K$ we have $I >x^{+}$, so that $I$ satisfies also the second inequality of \eqref{lower_bound_delta}.

Finally, for any $0<t<1$ fixed, for $I$ large enough $(t,I)$ satisfies the two inequalities of \eqref{lower_bound_delta}. Hence, for all small $0<t<1$ and $\Im(z)$ large enough,
$$\vert\tilde{w}_{3}(z)\vert\leq \frac{t\Im(z)}{\vert z\vert}\leq t,$$
and $\tilde{w}_{3}(z)$ goes to zero as $\Im(z)$ goes to infinity.
\end{proof}
\begin{remark}
The choice the constant $K$ could be certainly improved, depending on the value of $\sigma_{1},\sigma_{2},\beta_{1}$ and $\gamma_{1}$. One of the way to improve $K$ is to find the set $\mathcal{K}$ of values $I$ in the above proof such that the two inequalities in \eqref{lower_bound_delta} are satisfied for some $0<t<\sqrt{2}-1$ (the restriction on $t$ is given by the condition $1-2t-t^{2}\geq 0$). These inequalities involves two polynomials in $\mathbb{R}[t,I]$ of degree $4$ in $t$ and $2$ in $I$, and it can be easily seen that $\mathcal{K}$ is an interval $[K_{0},\infty[$. The constant $K_{0}$, which can be obtained numerically, is a better constant than $K$. We chose to give the above explicit constant $K$, since our simulations showed that $K$ does not differ much from $K_{0}$.
\end{remark}

Set $w_{3}(z)=(1+\tilde{w}_{3}(z))z$, and for $z\in\mathbb{C}_{K}$, set 
$$F(z)=F_{\mu_{3}}(w_{3}(z))zw_{3}(z)^{-1}.$$

\begin{proposition}\label{finalStepMultiplicatif}
The function $F$ is analytic on $\mathbb{C}_{K}$ and coincides with $F_{\mu_{2}}$ on its domain of definition.
\end{proposition}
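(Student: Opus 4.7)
The plan is to follow the template of the additive case (Proposition \ref{analyAddScal}): first establish analyticity of $w_{3}$ (hence of $F$), then verify that $F$ agrees with $F_{\mu_{2}}$ asymptotically as $\Im z\to\infty$ by exploiting the $\Sigma$-transform factorization of $\mu_{1}\boxtimes\mu_{2}=\mu_{3}$, and finally extend the identity to the whole domain by analytic continuation.

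For analyticity, since Proposition \ref{fixedPointMultiplicatif} shows that for each $z\in \mathbb{C}_{K}$ the map $T_{z}$ is a strict contraction of $\Delta_{z}$ (we proved $T_{z}(\Delta_{z})\subset r\Delta_{z}$ for some $r<1$), and since $(z,w)\mapsto T_{z}(w)$ is jointly holomorphic, the fixed point $\tilde w_{3}(z)$ depends holomorphically on $z$ by the standard implicit-function argument (the Fréchet derivative $\mathrm{Id}-\partial_{w}T_{z}$ is invertible at the fixed point because $\partial_{w}T_{z}$ has operator norm strictly less than $1$). Thus $w_{3}(z)=(1+\tilde w_{3}(z))z$ is analytic, nonvanishing on $\mathbb{C}_{K}$ (since $|\tilde w_{3}|<1$), and $F(z)=F_{\mu_{3}}(w_{3}(z))\,z/w_{3}(z)$ is analytic on $\mathbb{C}_{K}$.

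For the identification $F=F_{\mu_{2}}$, I would first rewrite the target identity in terms of the $\eta$-transform. Using $\eta_{\mu}(b)=1-bF_{\mu}(1/b)$, the equality $F(z)=F_{\mu_{2}}(z)$ is equivalent to
$$\eta_{\mu_{2}}(1/z)=\eta_{\mu_{3}}(1/w_{3}(z)).$$
The relation $\mu_{1}\boxtimes\mu_{2}=\mu_{3}$ gives, on a neighborhood of $0$, the $\Sigma$-transform factorization $\Sigma_{\mu_{2}}(b)\Sigma_{\mu_{1}}(b)=\Sigma_{\mu_{3}}(b)$, equivalently $\eta_{\mu_{1}}^{<-1>}(b)\,\eta_{\mu_{2}}^{<-1>}(b)=b\,\eta_{\mu_{3}}^{<-1>}(b)$. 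Setting $b=\eta_{\mu_{3}}(1/w_{3}(z))=h_{\mu_{3}}(w_{3}(z))/w_{3}(z)$, so that $\eta_{\mu_{3}}^{<-1>}(b)=1/w_{3}(z)$, the sought identity becomes
$$\eta_{\mu_{1}}^{<-1>}(b)=\frac{b\,z}{w_{3}(z)}=\frac{z\,h_{\mu_{3}}(w_{3}(z))}{w_{3}(z)^{2}}=\frac{1}{\omega_{1}(z)},$$
with $\omega_{1}(z)=w_{3}(z)^{2}/(z\,h_{\mu_{3}}(w_{3}(z)))$. Thus it suffices to show $\eta_{\mu_{1}}(1/\omega_{1}(z))=h_{\mu_{3}}(w_{3}(z))/w_{3}(z)$. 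Expanding $\eta_{\mu_{1}}(1/\omega_{1}(z))=1-F_{\mu_{1}}(\omega_{1}(z))/\omega_{1}(z)$ and using the fixed point equation $T_{z}(\tilde w_{3}(z))=\tilde w_{3}(z)$ (which after the substitution $z(1+\tilde w_{3})^{2}/h_{\mu_{3}}(z(1+\tilde w_{3}))=\omega_{1}(z)$ reads $h_{\mu_{1}}(\omega_{1}(z))=w_{3}(z)/z$, i.e.\ $F_{\mu_{1}}(\omega_{1}(z))=\omega_{1}(z)-w_{3}(z)/z$), a direct computation yields $F_{\mu_{1}}(\omega_{1}(z))/\omega_{1}(z)=1-h_{\mu_{3}}(w_{3}(z))/w_{3}(z)$, as required.

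The remaining issue — and the main obstacle — is that the $\Sigma$-transform identity holds only in a neighborhood of $0$, so I would carry out the above computation in the asymptotic regime $\Im z\to+\infty$. Proposition \ref{fixedPointMultiplicatif} tells us that $\tilde w_{3}(z)\to 0$, hence $w_{3}(z)/z\to 1$ and both $1/z$ and $1/w_{3}(z)$ lie in the common domain where $\Sigma_{\mu_{1}}$, $\Sigma_{\mu_{2}}$, $\Sigma_{\mu_{3}}$ and the relevant inverses of $\eta$ are well defined (here the assumption that $\mu_{1}$ and $\mu_{3}$ have nonzero first moments is crucial). On this sub-domain the two sides of the identity are equal, and since $F$ and $F_{\mu_{2}}$ are both analytic on $\mathbb{C}_{K}$ and coincide on a non-discrete set, they must coincide on all of $\mathbb{C}_{K}$ by analytic continuation. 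The technical care needed to specify the precise neighborhood of $\infty$ on which the $\Sigma$-manipulation is legitimate — and to check that the $b$ appearing above indeed lies in the domain of $\eta_{\mu_{i}}^{<-1>}$ — is the only delicate point; the algebraic identity itself follows mechanically from the fixed point equation.
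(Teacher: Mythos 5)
Your proof is correct and takes essentially the same route as the paper's. You define the auxiliary function $\omega_{1}(z)=w_{3}(z)^{2}/(z\,h_{\mu_{3}}(w_{3}(z)))$ — this is exactly the paper's $w_{1}(z)$ — derive $h_{\mu_{1}}(\omega_{1}(z))=w_{3}(z)/z$ from the fixed point equation, verify $\eta_{\mu_{1}}(1/\omega_{1}(z))=\eta_{\mu_{3}}(1/w_{3}(z))$, plug into the multiplicativity relation $\Sigma_{\mu_{1}}\Sigma_{\mu_{2}}=\Sigma_{\mu_{3}}$ (equivalently $\eta_{\mu_{1}}^{<-1>}\eta_{\mu_{2}}^{<-1>}=b\,\eta_{\mu_{3}}^{<-1>}$) to conclude $\eta_{\mu_{2}}(1/z)=\eta_{\mu_{3}}(1/w_{3}(z))$, and finish by analytic continuation; the paper presents the same algebra by directly computing $\Sigma_{\mu_{3}}/\Sigma_{\mu_{1}}$ at the point $\eta_{2}(z^{-1})$ and identifying it with $\Sigma_{\mu_{2}}(\eta_{2}(z^{-1}))$, but the underlying identities are identical. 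The analyticity argument (implicit function theorem plus $\vert\partial_{w}T_{z}\vert<1$ at the fixed point via Schwarz) is also the paper's argument. The domain bookkeeping you flag as the "only delicate point" is indeed what the paper spends a few lines on: it checks explicitly that $\eta_{\mu_{i}}(w)\sim w$ near $0$ and that $\vert w_{1}(z)\vert,\vert w_{3}(z)\vert\to\infty$ as $\Im z\to\infty$, so that $w_{3}(z)^{-1}$, $w_{1}(z)^{-1}$ fall in the domains of $\eta_{\mu_{3}}^{<-1>},\eta_{\mu_{1}}^{<-1>}$ and $\eta_{\mu_{3}}(w_{3}(z)^{-1})$ falls in that of $\eta_{\mu_{2}}^{<-1>}$; your sketch would need this spelled out, but the plan is sound.
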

In the following proof, recall that $\eta_{\mu}(w)=w[w^{-1}-F_{\mu}(w^{-1})]$ denotes the $\eta$-transform of a distribution $\mu$. We have in particular $\eta_{\mu}(w)=wh_{\mu}(w^{-1})$ for $\mathbb{C}^{+}$.
\begin{proof}
By Denjoy-Wolff Theorem, $\left\vert T_{z}\big[\tilde{w}_{3}(z)\big]\right\vert <1$, thus the implicit function theorem applied to the function $g(w,z)=T_{z}(w)-w$, analytic on $\lbrace (w,z)\vert z\in \mathbb{C}_{K},w\in \Delta_{z}\rbrace$, yields the analyticity of $\tilde{w}_{3}$ and $w_{3}$. For all $z\in\mathbb{C}_{K}$, $\tilde{w}_{3}(z)\leq \frac{\Im(z)}{5\vert z\vert}$, thus $w_{3}(z)=z(1+\tilde{w}_{3}(z))\in\mathbb{C}_{+}$, and $F$ is well-defined and analytic on $\mathbb{C}_{K}$. 

Set $w_{1}(z)=\frac{w_{3}(z)^{2}z^{-1}}{h_{3}(w_{3}(z))}$. Since $\tilde{w}_{3}(z)=T_{z}(\tilde{w}_{3}(z))=h_{1}(z(1+\tilde{w}_{3}(z))^{2}h_{3}[z(1+\tilde{w}_{3}(z))]^{-1})-1$,
$$w_{3}(z)=z(1+\tilde{w}_{3}(z))=zh_{1}\left(w_{3}(z)^{2}z^{-1}h_{3}[w_{3}(z)]^{-1}\right)=zh_{1}(w_{1}(z)).$$
Hence,
$$\eta_{\mu_{3}}(w_{3}(z)^{-1})=w_{3}(z)^{-1}h_{3}(w_{3}(z))=w_{1}(z)^{-1}w_{3}(z)z^{-1}=w_{1}(z)h_{1}(w_{1}(z))=\eta_{\mu_{1}}(w_{1}(z)).$$
Set $\eta_{2}(w)=1-wF(w^{-1})$ for $z\in\mathbb{C}_{K}$. Then, for $w$ such that $w^{-1}\in\mathbb{C}_{K}$,
$$\eta_{2}(w)=1-wF(w^{-1})=1-w_{3}(w^{-1})^{-1}F_{\mu_{3}}(w_{3}(w^{-1}))=\eta_{\mu_{3}}(w_{3}(w^{-1})^{-1})=\eta_{\mu_{1}}(w_{1}(w^{-1})^{-1}).$$
Since $w_{3}(z)=z(1+\tilde{w}_{3}(z))$ with $\vert \tilde{w}_{3}(z)\vert\leq \frac{\Im(z)}{5\vert z\vert}$, $\Im(w_{3}(z))\geq 4/5\Im(z) $ and $\Im[ w_{3}(z)]$ goes to infinity when $\Im(z)$ goes to infinity. Hence, by \eqref{ineg_h_mu}, $h_{3}(w_{3}(z))$ converges to $1$ as $\Im(z)$ goes to infinity, so that 
$\vert w_{1}(z)\vert=\left\vert\frac{w_{3}(z)^{2}z^{-1}}{h_{3}(w_{3}(z))}\right\vert $ goes to infinity when $\Im(z)$ goes to infinity. For $i\in\lbrace 1,3\rbrace$, $\eta_{i}(z)\sim z$ for $z$ going to zero; hence, for $\Im(z)$ large enough, $w_{3}(z)^{-1},w_{1}(z)^{-1}$ are respectively in the image of $\eta_{\mu_{3}}^{<-1>}$, $\eta_{\mu_{1}}^{<-1>}$, and $\eta_{2}(z^{-1})=\eta_{\mu_{3}}(w_{3}(z)^{-1})$ is in the domain of $\eta_{\mu_{2}}^{<-1>}$. This implies in particular that
$$\eta_{\mu_{3}}^{<-1>}(\eta_{\mu_{3}}(w_{3}(z)^{-1}))=w_{3}(z)^{-1}, \quad \eta_{\mu_{1}}^{<-1>}(\eta_{\mu_{1}}(w_{1}(z)^{-1}))=w_{1}(z)^{-1}.$$
Therefore, since $\eta_{\mu_{1}}(w_{1}(z)^{-1})=\eta_{\mu_{3}}(w_{3}(z)^{-1})=\eta_{2}(z^{-1})$, for $\Im(z)$ large enough we have
\begin{align*}
\frac{\Sigma_{3}(\eta_{2}(z^{-1}))}{\Sigma_{1}(\eta_{2}(z^{-1}))}=&\frac{\eta_{\mu_{3}}^{<-1>}(\eta_{\mu_{3}}(w_{3}(z)^{-1})\eta_{\mu_{1}}(w_{1}(z)^{-1})}{\eta_{\mu_{3}}(w_{3}(z)^{-1})\eta_{\mu_{1}}^{<-1>}(\eta_{\mu_{1}}(w_{1}(z)^{-1}))}\\
=&\frac{w_{3}(z)^{-1}\eta_{\mu_{1}}(w_{1}(z)^{-1})}{\eta_{\mu_{3}}(w_{3}(z)^{-1})w_{1}(z)^{-1}}=\frac{w_{1}(z)}{w_{3}(z)}\\
=&\frac{w_{3}(z)}{zh_{3}(w_{3}(z))}=\frac{z^{-1}}{\eta_{2}(z^{-1})}.
\end{align*}
On the other hand, by the relation $\mu_{1}\boxtimes\mu_{2}=\mu_{3}$, for $\Im(z)$ large enough we have
$$\frac{\Sigma_{3}(\eta_{2}(z^{-1}))}{\Sigma_{1}(\eta_{2}(z^{-1}))}=\Sigma_{2}(\eta_{2}(z^{-1}))=\frac{\eta_{\mu_{2}}^{<-1>}(\eta_{2}(z^{-1}))}{\eta_{2}(z^{-1})}.$$
Hence, $\eta_{\mu_{2}}^{<-1>}(\eta_{2}(z^{-1}))=z^{-1}$, which yields, after applying $\eta_{\mu_{2}}$ on both sides,
$$\eta_{\mu_{2}}(z^{-1})=\eta_{2}(z^{-1}).$$
Therefore, $\eta_{2}$ and $\eta_{\mu_{2}}$ coincide in a neighborhood of zero. Since both maps are analytic, $\eta_{\mu_{2}}(z^{-1})=\eta_{2}(z^{-1})$ for $z\in\mathbb{C}_{K}$, which yields 
$$F=F_{\mu_{2}}$$
on $\mathbb{C}_{K}$.
\end{proof}

The proof of Theorem \ref{ThmScalMultiplicatif} is given by Proposition \ref{fixedPointMultiplicatif} and Proposition \ref{finalStepMultiplicatif}.

\section{Deconvolution in the operator valued case}

\subsection{Additive convolution}
In this subsection, we are given three $B$-valued distributions $\mu_{1},\mu_{2}$ and $\mu_{3}$ such that
$$\mu_{1}\boxplus \mu_{2}=\mu_{3},$$
and we want to recover the distribution of $\mu_{2}$. We suppose without loss of generality that $\mu_{1}(X)=0$, and that all distribution are bounded. Note that the latter condition could be weakened to unbounded distributions admitting moments of order $2$ without changing the proof. Since we did not want to introduce affiliated operators, we only are considering the bounded case.

This section is very similar to the scalar case, only the constant $K$ differs. We set $K=4\sqrt{2}\sigma_{1}$, and we define 
$$B_{K}:=\lbrace b\in B\vert \Im b>K\rbrace.$$
Define moreover the function $\tilde{h}_{3}(b)=F_{\mu_{3}}(b)+b$ on $B$, which is the operator valued version of $\tilde{h}_{3}$. Recall that for $a,b\in B$ self-adjoint, we write $b>a$ when $b-a>0$. 

\begin{proposition}\label{suborAddOp}
For $b\in B_{K}$, the function $T_{b}(w)=h_{1}(\tilde{h}_{3}(r)-b)+b$ is well defined and analytic on $\Delta_{b}=\lbrace r\in B^{+}, \Im r>3 \Im b/4\rbrace$.

For any $r\in\Delta_{b}$, the iterated function $T_{b}^{\circ n}(r)$ converges to the unique fixed point $w_{3}(b)$ of $\Delta_{b}$.
\end{proposition}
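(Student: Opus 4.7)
The plan is to mirror the scalar argument of Proposition \ref{suborAddScal}, replacing the scalar estimate \eqref{ineg_h_mu} on $h_{\mu_1}$ by its operator-valued analogue (Lemma \ref{general_bound_Hmu}) and the Denjoy--Wolff theorem by the Earl--Hamilton fixed point theorem for Banach spaces.

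First I would show that $T_b$ is well-defined and analytic on $\Delta_b$. Using the operator inequality $\Im F_{\mu_3}(r)\geq \Im r$ valid on $B^{+}$, for $r\in\Delta_b$ one obtains
$$\Im(\tilde h_3(r)-b)\;=\;\Im F_{\mu_3}(r)+\Im r-\Im b\;\geq\;2\Im r-\Im b\;>\;\tfrac12\Im b\;>\;0,$$
so $\tilde h_3(r)-b\in B^{+}$ and $h_1$ is analytic there; composition with the analytic map $r\mapsto \tilde h_3(r)-b$ gives analyticity of $T_b$.

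Next I would prove strong stability $T_b(\Delta_b)\subset\Delta_b$ with a uniform inner margin. Since $T_b(r)=\tilde h_3(r)-F_{\mu_1}(\tilde h_3(r)-b)$, we have $\Im T_b(r)=\Im b+\Im h_{\mu_1}(x)$ with $x=\tilde h_3(r)-b$; combining $\Im h_{\mu_1}(x)\geq -\|h_{\mu_1}(x)\|\,I$ with Lemma \ref{general_bound_Hmu} (using $\mu_1(\mathcal X)=0$) and $\sigma_{\inf}(\Im x)\geq\tfrac12\sigma_{\inf}(\Im b)$ yields
$$\|h_{\mu_1}(x)\|\;\leq\;\frac{4\sigma_1^{2}}{\sigma_{\inf}(\Im x)}\;\leq\;\frac{8\sigma_1^{2}}{\sigma_{\inf}(\Im b)}.$$
Consequently $\Im T_b(r)-\tfrac34\Im b\;\geq\;\tfrac14\Im b-\tfrac{8\sigma_1^{2}}{\sigma_{\inf}(\Im b)}\,I$, whose minimum eigenvalue is $\tfrac14\sigma_{\inf}(\Im b)-\tfrac{8\sigma_1^{2}}{\sigma_{\inf}(\Im b)}$. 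The hypothesis $\sigma_{\inf}(\Im b)>4\sqrt{2}\sigma_1$ makes this strictly positive (it vanishes precisely at the threshold, explaining the choice $K=4\sqrt{2}\sigma_1$), producing a uniform $\eta>0$ with $\Im T_b(r)\geq\tfrac34\Im b+\eta\,I$.

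Finally I would invoke the Earl--Hamilton theorem. Boundedness of $T_b(\Delta_b)$ follows from $\|T_b(r)-b\|=\|h_{\mu_1}(x)\|\leq 8\sigma_1^{2}/\sigma_{\inf}(\Im b)$. The margin $\eta>0$ translates into a positive norm-distance between $T_b(\Delta_b)$ and the exterior of $\Delta_b$: for any $r'\notin\Delta_b$ pick a unit vector $\xi$ with $\langle\Im r'\,\xi,\xi\rangle\leq\langle\tfrac34\Im b\,\xi,\xi\rangle$, so that $\|T_b(r)-r'\|\geq\langle\Im(T_b(r)-r')\xi,\xi\rangle\geq\eta$. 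Both hypotheses of Earl--Hamilton are then met, yielding a unique fixed point $w_3(b)\in\Delta_b$ to which every orbit $T_b^{\circ n}(r)$ converges. The main technical point will be calibrating constants: Lemma \ref{general_bound_Hmu} produces a factor of $4$ on the right-hand side (against the sharp $1$ of the scalar bound), which exactly doubles the threshold to $K=4\sqrt 2\sigma_1$; a secondary subtlety is passing from the operator-valued margin to the \emph{norm}-distance hypothesis required by Earl--Hamilton, handled via evaluation against a witnessing unit vector as above.
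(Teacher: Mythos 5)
Your proposal mirrors the paper's proof essentially step by step: the same lower bound $\Im(\tilde h_3(r)-b)>\frac12\Im b$ for well-definedness, the same invocation of Lemma \ref{general_bound_Hmu} (with $\mu_1(\mathcal X)=0$) to get $\|h_{\mu_1}(\tilde h_3(r)-b)\|\leq 8\sigma_1^2/\sigma_{\inf}(\Im b)$ and the resulting uniform inner margin justifying $K=4\sqrt 2\sigma_1$, and the same conversion of the spectral margin into a norm-gap so that Earle--Hamilton applies. The only cosmetic difference is that you witness the non-membership of $r'$ in $\Delta_b$ by a unit vector $\xi$ in a faithful representation, whereas the paper uses a positive functional of norm one together with the isometric embedding of $B$ in its bidual; these are equivalent here.
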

\begin{proof}
Let $b\in B_{K}$. Let $r\in\Delta_{b}$. Then, $\Im(r)> \frac{3\Im(b)}{4}$, which yields
\begin{equation}\label{lower_bound_argument_F1_OP}
\Im(\tilde{h}_{3}(r)-b)=\Im(F_{\mu_{3}}(r)+r-b)> 2\frac{3 \Im b}{4}-\Im b>\frac{ \Im(b)}{2},
\end{equation}
where we have used in the first inequality that $\Im[F_{\mu_{3}}(r)]\geq \Im(r)$ for $r\in B^{+}$ (see \cite{BPV}).  Since $h_{1}$ is defined on $B^{+}$, $T_{b}$ is in particular well-defined.

Since $\mu_{1}(X)=0$ by hypothesis, Lemma \ref{general_bound_Hmu} together with \ref{lower_bound_argument_F1_OP} yield
\begin{equation}\label{bound_H_mu_1_OP}
\Vert h_{\mu_{1}}(\tilde{h}_{3}(r)-b)\Vert \leq \frac{4\sigma_{1}^{2}}{\sigma_{\inf}\Im(\tilde{h}_{3}(r)-b)}\leq \frac{8\sigma_{1}^{2}}{\sigma_{\inf}\Im(b)}
\end{equation}
for $r\in\Delta_{b}$. Hence,
\begin{align*}
\Im[T_{b}(r)]=&\Im[h_{\mu_{1}}(\tilde{h}_{3}(r)-b)+b]\\
\geq& \Im b-\frac{8\sigma_{1}^{2}}{\sigma_{\inf} \Im(b)}.
\end{align*}
Since $\sigma_{\inf}\Im(b)> 4\sqrt{2}\sigma_{1}$, 
$$ \Im[T_{b}(r)]-3\Im(b)/4\geq \Im b/4-\frac{8\sigma_{1}^{2}}{\sigma_{\inf} \Im(b)}\geq \sigma_{\inf} \Im(b)/4-\frac{8\sigma_{1}^{2}}{\sigma_{\inf} \Im(b)}>\epsilon$$
for some constant $\epsilon>0$. Hence, $T_{b}(\Delta_{b})\subset \Delta_{b}$. Moreover, if $s\not \in \Delta_{b}$, then $\Im s\not \geq 3\Im b/4$. Hence, there exists a positive functional $\phi$ with $\Vert \phi\Vert =1$ such that $\phi(\Im s)\leq 3\phi(\Im b)/4$, which yields 
$$\phi(\Im[T_{b}(r)])-\phi(\Im s)>\epsilon$$
for $r\in \Delta_{b}$, and 
$$\vert \phi(T_{b}(r)-s)\vert \geq \vert \Im\phi(T_{b}(r)-s)\vert=\vert \phi(\Im T_{b}(r))-\phi(\Im s)\vert >\epsilon.$$
Hence, by the isometric embedding of $B$ in the bidual $B^{*}$,
$$\Vert T_{b}(r)]-s\Vert =\sup_{\substack{\phi\in B'\\\Vert \phi\Vert_{B'}=1}}\vert \phi(T_{b}(r)-s)\vert >\epsilon.$$
Therefore, $d(\partial \Delta_{b}, T_{b}(\Delta_{b}))>0$, and we can apply Earl-Hamilton theorem to the map $T_{b}:\Delta_{b}\to \Delta_{b}$. This implies that for all $r\in\Delta_{b}$, $T_{b}^{\circ n}(r)$ converges to the unique fixed point $w_{3}(b)$ of $T_{b}$ in $\Delta_{b}$.
\end{proof}

\begin{proposition}\label{analyAddOp}
The function $w_{3}$ is Gateaux analytic on $B_{K}$ and we have
$$F_{\mu_{2}}(b)=F_{\mu_{3}}(w_{3}(b))$$
for $z\in B_{K}$.
\end{proposition}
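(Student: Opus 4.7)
The plan is to mirror the two-step structure of Proposition \ref{analyAddScal} in the operator-valued setting: first establish Gateaux analyticity of $w_{3}$ on $B_{K}$, then derive the subordination identity by a Voiculescu-transform computation, first for $b$ with sufficiently large imaginary part and then, via the identity principle, on all of $B_{K}$.

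For analyticity, I would apply the holomorphic implicit function theorem to the jointly analytic map $G(b,r) = T_{b}(r) - r$ on the open set $\{(b,r) : b \in B_{K},\, r \in \Delta_{b}\}$. The Earl--Hamilton hypothesis verified in Proposition \ref{suborAddOp} amounts to $T_{b}$ being a strict contraction in the Carath\'eodory metric on $\Delta_{b}$, which implies $\Vert \partial_{r} T_{b}(w_{3}(b))\Vert < 1$; hence $\partial_{r} G(b, w_{3}(b)) = \partial_{r} T_{b}(w_{3}(b)) - I$ is invertible. The implicit function theorem then gives local Fr\'echet (hence Gateaux) analyticity of $w_{3}$ around every $b \in B_{K}$.

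For the subordination identity, set $w_{1}(b) := \tilde{h}_{3}(w_{3}(b)) - b = F_{\mu_{3}}(w_{3}(b)) + w_{3}(b) - b$. Unfolding $T_{b}(w_{3}(b)) = w_{3}(b)$ yields $h_{\mu_{1}}(w_{1}(b)) = w_{3}(b) - b$, which rearranges into the companion identity
$$F_{\mu_{1}}(w_{1}(b)) = F_{\mu_{3}}(w_{3}(b)).$$
Choose $K' \geq K$ large enough that, for $b \in B_{K'}$, both $w_{3}(b)$ and $w_{1}(b)$ sit in a half-space on which each $F_{\mu_{i}}$ is univalent and the operator-valued Voiculescu transform $\phi_{\mu_{i}}(c) := F_{\mu_{i}}^{<-1>}(c) - c$ is defined at $c = F_{\mu_{3}}(w_{3}(b))$; such a $K'$ is produced by combining Lemma \ref{general_bound_Hmu} applied to $\mu_{1},\mu_{2},\mu_{3}$ with a Banach fixed-point inversion of each $F_{\mu_{i}}$ near infinity. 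The direct computation
\begin{align*}
\phi_{\mu_{3}}\bigl(F_{\mu_{3}}(w_{3}(b))\bigr) &= w_{3}(b) - F_{\mu_{3}}(w_{3}(b)), \\
\phi_{\mu_{1}}\bigl(F_{\mu_{3}}(w_{3}(b))\bigr) &= \phi_{\mu_{1}}\bigl(F_{\mu_{1}}(w_{1}(b))\bigr) = w_{1}(b) - F_{\mu_{3}}(w_{3}(b)) = w_{3}(b) - b,
\end{align*}
combined with the operator-valued additivity $\phi_{\mu_{3}} = \phi_{\mu_{1}} + \phi_{\mu_{2}}$ (see \cite{Voi95}), yields $\phi_{\mu_{2}}\bigl(F_{\mu_{3}}(w_{3}(b))\bigr) = b - F_{\mu_{3}}(w_{3}(b))$, i.e.\ $F_{\mu_{2}}^{<-1>}\bigl(F_{\mu_{3}}(w_{3}(b))\bigr) = b$, so that $F_{\mu_{2}}(b) = F_{\mu_{3}}(w_{3}(b))$ on $B_{K'}$. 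Since both sides are Gateaux analytic on the connected open set $B_{K}$ and agree on $B_{K'}$, the identity principle applied to restrictions to complex lines extends the equality to all of $B_{K}$.

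The main obstacle is not the algebraic manipulation, which is essentially identical to the scalar case, but verifying that $F_{\mu_{i}}^{<-1>}\circ F_{\mu_{i}} = \mathrm{Id}$ genuinely holds at the specific operator-valued arguments $w_{1}(b)$ and $w_{3}(b)$. In the scalar setting Maassen's univalence lemma handled this on $\mathbb{C}_{2\sigma}$; in the operator-valued setting one needs the analogous quantitative univalence of $F_{\mu_{i}}$ on a half-space of the form $\{\Im b > c\}$, which is extracted from Lemma \ref{general_bound_Hmu} but where the numerical constants determining $K'$ require some care.
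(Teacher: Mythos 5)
Your subordination computation is essentially identical to the paper's: unfold the fixed-point equation to get the companion identity $F_{\mu_1}(w_1(b)) = F_{\mu_3}(w_3(b))$, push through the operator-valued Voiculescu transforms for $b$ of large imaginary part using $\phi_{\mu_3} = \phi_{\mu_1} + \phi_{\mu_2}$, and extend by the identity principle on slices (the paper only spells out the ``large $b$'' case and leaves the extension implicit, but it is the same argument). The analyticity part, however, takes a genuinely different route. The paper fixes $\phi \in B^*$ and a complex line $r \mapsto a + rb$, observes that $f_n(r) = \phi\bigl(T_{a+rb}^{\circ n}(a+rb)\bigr)$ is a uniformly bounded sequence of scalar holomorphic functions (using the bound $\Vert T_b(w)-b\Vert \le 8\sigma_1^2/K$ from Lemma~\ref{general_bound_Hmu}) converging pointwise to $\phi\circ w_3$, and invokes Montel's theorem. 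You instead apply the holomorphic implicit function theorem to $G(b,r) = T_b(r) - r$, which if it works gives local Fr\'echet analyticity, a stronger conclusion than Gateaux analyticity, but at the cost of a spectral estimate that the Montel route avoids entirely.

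On that spectral estimate, be careful: what Earle--Hamilton gives you is a strict contraction in the Carath\'eodory pseudometric of $\Delta_b$, and hence geometric convergence of the iterates $T_b^{\circ n}$. This does not directly yield $\Vert \partial_r T_b(w_3(b))\Vert < 1$ in the Banach operator norm, which is the claim you wrote; the Carath\'eodory norm at $w_3(b)$ is a different norm on $B$ and the unit balls need not be comparable with ratio $1$. What you can extract is that the \emph{spectral radius} of $\partial_r T_b(w_3(b))$ is strictly less than $1$: since $T_b^{\circ n}(w) \to w_3(b)$ uniformly on a small ball around the fixed point, some iterate $T_b^{\circ n}$ maps a ball $D(w_3(b),\rho)$ into $D(w_3(b),\rho/2)$, and the Banach-space Schwarz lemma then gives $\Vert (\partial_r T_b(w_3(b)))^n\Vert \le 1/2$, hence spectral radius $\le (1/2)^{1/n} < 1$. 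That is enough for $\partial_r G(b,w_3(b)) = \partial_r T_b(w_3(b)) - I$ to be invertible, so your implicit-function-theorem argument does go through once restated in terms of the spectral radius rather than the operator norm.
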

\begin{proof}
Let $a\in B_{k}$ and $b\in B$. Since $B_{K}$ is open, there exists a bounded open set $U\subset B$ such that $0\in U$ and $a+rb\in B_{K}$ for $r\in U$. We denote by $M$ the bound on $U$. For $\phi\in B'$, define the fucntion $f(r)=\phi(w_{3}(a+rb))$ for $r\in U$. By Proposition \ref{analyAddOp}, $f$ is the pointwise limit of $f_{n}(r)=\phi\left (T_{a+rb}^{\circ n}(a+rb)\right)$. By the definition of $T_{b}$, $T_{b}(b)$ is anlytic, which yields that $f_{n}$ is analytic on $U$. Moreover, by \eqref{bound_H_mu_1_OP}, $\Vert T_{b}(w)-b\Vert\subset \frac{8\sigma_{1}^{2}}{K}$ for $b\in B_{k},w\in \Delta_{b}$, which implies that 
$$\Vert T_{a+rb}^{\circ n}(a+rb)\Vert\leq \Vert a\Vert +M\Vert b\Vert + \frac{8\sigma_{1}^{2}}{K}.$$
Hence, $(f_{n})_{n\geq 1}$ a family of uniformly bounded analytic functions which converges pointwise to $f$, and Montel's theorem implies that $f=\phi\circ w_{3}$ is analytic. Since this holds for all $\phi\in B'$, $w_{3}$ is Gateaux analytic. 

Since \eqref{bound_H_mu_1_OP} implies that
$$\Vert w_{3}(b)-b\Vert=\Vert T_{b}(w_{3}(b)-b\Vert\leq \frac{8\sigma_{1}^{2}}{K},$$
for $b$ large enough, $F_{\mu_{3}}(w_{3}(b))$ is in the domain of definition of $\phi_{\mu_{1}}$ and $\phi_{\mu_{3}}$. The same reasoning as in the proof of Proposition \ref{suborAddScal} yields that for $b$ large enough,
$$\phi_{\mu_{3}}(F_{\mu_{3}}(w_{3}(b)))-\phi_{\mu_{1}}(F_{\mu_{3}}(w_{3}(b)))=b-F_{\mu_{3}}(w_{3}(b)).$$
On the other hand, since $\mu_{1}\boxplus\mu_{2}=\mu_{3}$, $\phi_{\mu_{1}}+\phi_{\mu_{2}}=\phi_{\mu_{3}}$ on the intersection of their domain of definition. Therefore, for $b$ large enough,
$$\phi_{\mu_{2}}(F_{\mu_{3}}(w_{3}(b)))=b-F_{\mu_{3}}(w_{3}(b)),$$
which yields 
$$F_{\mu_{2}}(b)=F_{\mu_{3}}(w_{3}(b)).$$
\end{proof}

\subsection{Multiplicative convolution}
Given two realizable bounded non-commutative distributions $\mu_{1}$ and $\mu_{3}$ we are interested in finding a realizable distribution $\mu_{2}$ such that
\begin{equation}\label{freeDeconvoOp}
\mu_{1}\boxtimes \mu_{2}=\mu_{3}.
\end{equation}

We first recall some notations of Theorem \ref{deconvolutionMultiplicativeCase}:
\begin{itemize}
\item $R_{i}$ is the bound of the distribution $\mu_{i}$,
\item $\alpha_{i}:=\Vert \mu_{i}(\mathcal{X})\Vert$ is the norm of the first moment of $\mu_{i}$, and
\item $\alpha_{i}^{*}:=\inf\Spec \mu_{i}(\mathcal{X})$ is the minimum of the spectrum of $\mu_{i}(\mathcal{X})$.
\item $\sigma_{i}^{2}:=\Vert \mu(\mathcal{X}^{2})-\mu(\mathcal{X})^{2}\Vert$ is the variance of $\mu_{i}$.
\end{itemize}
Since we assumed $\mu_{1}(\mathcal{X})>0$, we have $\alpha_{1}^{*}>0$. 
We introduce the constants
\begin{itemize}
\item $K_{1}:=(R_{1}+2\frac{\sigma_{1}^{2}}{\alpha_{1}^{*}})$,
\item $K_{3}:=\sup(\frac{2}{\alpha_{1}^{*}}(\sigma_{3}+\alpha_{3})K_{1},R_{3}+\sigma_{3})$, and
\item $K:=\frac{2}{\alpha_{1}^{*}}K_{3}$.
\end{itemize}
\begin{lemma}\label{boundH1}
Let $\kappa< 1$. For all $w\in D_{\kappa K_{1}^{-1}}$, $H_{1}(w)$ is well-defined, invertible and 
$$\Vert H_{1}(w)^{-1}\Vert\leq \frac{2}{(2-\kappa)\alpha_{1}^{*}}.$$
\end{lemma}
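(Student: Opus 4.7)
The plan is to reduce the invertibility of $H_{1}(w)$ to a Neumann series perturbation around the constant $\mu_{1}(\mathcal{X})$, which is invertible thanks to the hypothesis $\alpha_{1}^{*}>0$. First I would observe that $\kappa K_{1}^{-1}<K_{1}^{-1}\leq R_{1}^{-1}$, so Lemma \ref{studyHmu} applies and guarantees that $H_{1}$ is analytic on the disk $D_{\kappa K_{1}^{-1}}$, with the quantitative estimate
$$\Vert H_{1}(w)-\mu_{1}(\mathcal{X})\Vert\leq \frac{\sigma_{1}^{2}}{\Vert w\Vert^{-1}-R_{1}}.$$

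Next I would unpack the definition $K_{1}=R_{1}+2\sigma_{1}^{2}/\alpha_{1}^{*}$, which has been chosen exactly to control the above denominator. For $\Vert w\Vert\leq \kappa K_{1}^{-1}$ we have $\Vert w\Vert^{-1}\geq K_{1}/\kappa$, and using $\kappa<1$ together with $R_{1}\geq 0$,
$$\Vert w\Vert^{-1}-R_{1}\geq \frac{K_{1}-\kappa R_{1}}{\kappa}=\frac{R_{1}(1-\kappa)}{\kappa}+\frac{2\sigma_{1}^{2}}{\kappa\alpha_{1}^{*}}\geq \frac{2\sigma_{1}^{2}}{\kappa\alpha_{1}^{*}}.$$
Plugging this back into the bound above yields $\Vert H_{1}(w)-\mu_{1}(\mathcal{X})\Vert\leq \kappa\alpha_{1}^{*}/2$.

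Finally, I would run the Neumann series argument. Since $\mu_{1}(\mathcal{X})\geq \alpha_{1}^{*}>0$ is a positive, invertible element of $B$ with $\Vert \mu_{1}(\mathcal{X})^{-1}\Vert\leq 1/\alpha_{1}^{*}$, the factorisation
$$H_{1}(w)=\mu_{1}(\mathcal{X})\Bigl(\mathrm{Id}+\mu_{1}(\mathcal{X})^{-1}\bigl(H_{1}(w)-\mu_{1}(\mathcal{X})\bigr)\Bigr)$$
shows that the perturbation term has norm at most $\kappa/2<1$. Hence the Neumann series converges, $H_{1}(w)$ is invertible, and
$$\Vert H_{1}(w)^{-1}\Vert\leq \frac{1}{\alpha_{1}^{*}}\cdot\frac{1}{1-\kappa/2}=\frac{2}{(2-\kappa)\alpha_{1}^{*}}.$$
The argument is essentially mechanical once Lemma \ref{studyHmu} is in place; there is no genuine obstacle, the only point requiring care is matching the numerical constants, which is precisely why $K_{1}$ was defined with the coefficient $2$ in front of $\sigma_{1}^{2}/\alpha_{1}^{*}$.
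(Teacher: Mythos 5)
Your argument matches the paper's proof essentially step for step: both invoke Lemma \ref{studyHmu} to get the estimate $\Vert H_{1}(w)-\mu_{1}(\mathcal{X})\Vert\leq \sigma_{1}^{2}/(\Vert w\Vert^{-1}-R_{1})$, both use the definition of $K_{1}$ to reduce this to $\kappa\alpha_{1}^{*}/2$, and both conclude by factoring $H_{1}(w)=\mu_{1}(\mathcal{X})\bigl(\mathrm{Id}+\mu_{1}(\mathcal{X})^{-1}d\bigr)$ with $d=H_{1}(w)-\mu_{1}(\mathcal{X})$ and running a Neumann series bound. The reasoning and the constants coincide; the only cosmetic difference is that you spell out the intermediate term $R_{1}(1-\kappa)/\kappa$ before discarding it, whereas the paper drops it implicitly.
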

\begin{proof}
Since $\kappa K_{1}^{-1}\leq R_{1}^{-1}$, $H_{1}$ is well-defined on $D_{\kappa K_{1}^{-1}}$ by Lemma \ref{studyHmu}. Let $w\in D_{\kappa K_{1}^{-1}}$. Then, Lemma \ref{studyHmu} yields that 
$$\Vert H_{1}(w)-\mu(\mathcal{X})\Vert \leq \frac{\sigma_{1}^{2}}{\Vert w\Vert^{-1}-R_{1}}.$$
Since $\Vert w\Vert^{-1}\geq \kappa^{-1}K_{1}$ and $K_{1}=R_{1}+2\frac{\sigma_{1}^{2}}{\alpha_{1}^{*}}$,
$$\Vert H_{1}(w)-\mu(\mathcal{X})\Vert\leq \frac{\sigma_{1}^{2}}{2\kappa^{-1}\sigma_{1}^{2}/\alpha_{1}^{*}}\leq \frac{\kappa\alpha_{1}^{*}}{2}.$$
Thus, there exists $d\in B$ such that $\Vert d\Vert \leq \frac{\kappa\alpha_{1}^{*}}{2}$ and $H_{1}(w)=\mu(\mathcal{X})+d=\mu(\mathcal{X})(1+\mu(\mathcal{X})^{-1}d)$. By definition of $\alpha_{1}^{*}$, we have $\Vert \mu(\mathcal{X})^{-1}\Vert=(\alpha_{1}^{*})^{-1}$, and thus $\Vert d\mu(\mathcal{X})^{-1}\Vert \leq \Vert d\Vert (\alpha_{1}^{*})^{-1}\leq \kappa/2$. Hence, $(1+\mu(\mathcal{X})^{-1}d)$ is invertible and 
$$\Vert (1+d\mu(\mathcal{X})^{-1})^{-1}\Vert\leq \frac{1}{1-\kappa/2}\leq 2/(2-\kappa).$$
Therefore, $H_{1}(w)$ is also invertible and 
$$\Vert H_{1}(w)^{-1}\Vert \leq \Vert \mu(\mathcal{X})^{-1}\Vert\Vert (1+d\mu(\mathcal{X})^{-1})^{-1}\Vert\leq\frac{2}{(2-\kappa)\alpha_{1}^{*}}.$$
\end{proof}
We denote by $\Omega$ the open set $\lbrace b\in D_{K^{-1}}, b\text{ invertible}\rbrace$. For $b\in \Omega$, we denote by $\Delta_{b}$ the open set $bD_{2(\alpha_{1}^{*})^{-1}}$. Remark that $\Delta_{b}$ always contains the point $b\mu_{1}(\mathcal{X})^{-1}$, because $\Vert \mu_{1}(\mathcal{X})^{-1}\Vert=(\alpha_{1}^{*})^{-1}<2(\alpha_{1}^{*})^{-1}$.

For $b\in \Omega$, let $T_{b}:\Delta_{b}\to B$ be the function 
$$T_{b}(w)=bH_{1}\big(b^{-1}\tilde{H}_{3}(w)\big)^{-1},$$
where we recall that $\tilde{H}_{3}(w)=wH_{3}(w)w$ for $\Vert z\Vert\leq R_{3}^{-1}$.
\begin{lemma}\label{welldefiKb}
The map $T_{b}$ is well-defined on $\Delta_{b}$, and there is a unique fixed point $w_{3}(b)$ of $T_{b}$ in $\Delta_{b}$. Moreover, for all $w\in\Delta_{b}$, $T_{b}^{\circ n}(w)$ converges to $w_{3}(b)$ as $n$ goes to infinity.
\end{lemma}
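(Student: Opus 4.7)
The plan is to apply the Earl--Hamilton fixed point theorem to the map $T_b$ on the open set $\Delta_b=bD_{2(\alpha_1^*)^{-1}}$. The three items to verify are analyticity of $T_b$, boundedness of $T_b(\Delta_b)$, and a strictly positive separation of $T_b(\Delta_b)$ from $B\setminus\Delta_b$. All of them will reduce to a uniform control of $\|b^{-1}\tilde H_3(w)\|$ by a quantity strictly below $K_1^{-1}$, which then lets us invoke Lemma \ref{boundH1} with some uniform $\kappa<1$.

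For the key estimate, write any $w\in\Delta_b$ as $w=bu$ with $\|u\|<2/\alpha_1^*$. The second term in the max defining $K$ forces $\|w\|\leq\|b\|\cdot 2/\alpha_1^*<R_3^{-1}$, so Lemma \ref{studyHmu} applies and yields $\|H_3(w)\|\leq\alpha_3+\sigma_3$. Using the factorisation $b^{-1}\tilde H_3(w)=uH_3(w)bu$, one obtains
$$\|b^{-1}\tilde H_3(w)\|\leq\|u\|^2\|b\|(\alpha_3+\sigma_3)\leq\kappa K_1^{-1},\qquad \kappa:=\frac{4(\alpha_3+\sigma_3)K_1\|b\|}{(\alpha_1^*)^2},$$
and the first term in the max defining $K$, combined with $\|b\|<K^{-1}$, gives $\kappa<1$ uniformly in $w$. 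Lemma \ref{boundH1} then provides that $H_1(b^{-1}\tilde H_3(w))$ is invertible with $\|H_1(b^{-1}\tilde H_3(w))^{-1}\|\leq\rho$, where $\rho:=2/((2-\kappa)\alpha_1^*)<2/\alpha_1^*$. In particular $T_b$ is well-defined on $\Delta_b$ and $T_b(\Delta_b)\subset b\overline D_\rho\subsetneq\Delta_b$.

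Earl--Hamilton's hypotheses are then direct. Analyticity of $T_b$ follows from composing the analytic maps $H_1,H_3$ (Lemma \ref{studyHmu}) with inversion in the open set of invertible elements. Boundedness of $T_b(\Delta_b)$ is immediate from $T_b(\Delta_b)\subset b\overline D_\rho$. For the separation, any $w'\in B\setminus\Delta_b$ satisfies $\|b^{-1}w'\|\geq 2/\alpha_1^*$, while any $w\in T_b(\Delta_b)$ has $\|b^{-1}w\|\leq\rho$, so
$$\|w-w'\|\geq\frac{\|b^{-1}(w-w')\|}{\|b^{-1}\|}\geq\frac{2/\alpha_1^*-\rho}{\|b^{-1}\|}>0,$$
a uniform positive lower bound since $b$ is invertible. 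Earl--Hamilton then delivers a unique fixed point $w_3(b)\in\Delta_b$ of $T_b$, and $T_b^{\circ n}(w)\to w_3(b)$ for every $w\in\Delta_b$.

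The main delicate point is securing the strict uniform inequality $\kappa<1$: had one only obtained $\kappa\leq 1$, the image $T_b(\Delta_b)$ could merely be shown to lie in $\overline{\Delta_b}$, and the Earl--Hamilton separation condition would fail. Strictness relies crucially on $b$ lying in the \emph{open} disk $D_{K^{-1}}$ together with the precise shape of $K$ built from both terms of the maximum. Beyond this, the argument mirrors the strategy already deployed in Proposition \ref{suborAddOp} for the operator-valued additive case, with Lemma \ref{boundH1} playing the role of the multiplicative counterpart to Lemma \ref{general_bound_Hmu}.
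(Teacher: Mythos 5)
Your proof is correct and follows essentially the same route as the paper: reduce to the uniform estimate $\|b^{-1}\tilde H_3(w)\|\leq\kappa K_1^{-1}$ with $\kappa<1$, invoke Lemma \ref{boundH1} to get invertibility and the bound on $H_1(\cdot)^{-1}$, and conclude via Earle--Hamilton. The only cosmetic difference is that the paper parametrizes $\kappa$ directly by $\|b\|=\kappa K^{-1}$ and carries that $\kappa$ through the chain, whereas you extract a (slightly tighter) $\kappa$ from the terminal bound on $\|b^{-1}\tilde H_3(w)\|$; both choices satisfy the hypothesis of Lemma \ref{boundH1}, so the arguments are equivalent.
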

\begin{proof}
Let $b\in \Omega$, so that there exist $\kappa<1$ such that $\Vert b\Vert=\kappa K^{-1}$. Let $w\in \Delta_{b}$. Then, $w=bw'$ with $w'\in D_{2(\alpha_{1}^{*})^{-1}}$, and thus
$$\Vert w\Vert\leq \Vert b\Vert \Vert w'\Vert < K^{-1}2(\alpha_{1}^{*})^{-1}\leq \kappa K_{3}^{-1}.$$
Since $K_{3}=\sup(\frac{2}{\alpha_{1}^{*}}(\sigma_{3}+\alpha_{3})K_{1},R_{3}+\sigma_{3})>R_{3}$, $H_{3}(w)$ is well-defined and by Lemma \ref{studyHmu},
$$\Vert H_{3}(w)-\mu_{3}(\mathcal{X})\Vert\leq \frac{\sigma_{3}^{2}}{K_{3}-R_{3}}\leq \sigma_{3}.$$
Hence, $\Vert H_{3}(w)\Vert\leq \alpha_{3}+\sigma_{3}$ and thus 
$$\Vert b^{-1}wH_{3}(w)w\Vert \leq \Vert w'\Vert \Vert H_{3}(w)\Vert \Vert w\Vert\leq \frac{2}{\alpha_{1}^{*}}(\alpha_{3}+\sigma_{3})\kappa K_{3}^{-1}.$$
Since $K_{3}\geq \frac{2}{\alpha_{1}^{*}}(\sigma_{3}+\alpha_{3})K_{1}$, 
$$\Vert b^{-1}wH_{3}(w)w\Vert\leq \kappa K_{1}^{-1},$$
Hence, by Lemma \ref{boundH1}, $H_{1}(b^{-1}wH_{3}(w)w)$ is invertible and 
$\Vert H_{1}(b^{-1}wH_{3}(w)w)^{-1}\Vert \leq \frac{2}{(2-\kappa)\alpha_{1}^{*}},$
which implies that $T_{b}(w)\in \tilde{\Delta}_{b}:=b D_{2\big((2-\kappa)\alpha_{1}^{*})^{-1}}$. Remark that $\tilde{\Delta}_{b}\subset \Delta_{b}$. In order to apply Earle-Hamilton's theorem it remains to show that $d(\tilde{\Delta}_{b},\partial\Delta_{b})>0$. Let $u\not \in \Delta_{b}$ and $v\in\tilde{\Delta}_{b}$, and set $u'=b^{-1}u$ and $v'=b^{-1}v$. Then, $\Vert u'\Vert\geq\frac{2}{\alpha_{1}^{*}}$ because $bu'\not\in bD_{2(\alpha_{1}^{*})^{-1}}$ and $\Vert v'\Vert<2\big((2-\kappa)\alpha_{1}^{*})^{-1}$. Thus,
$$\Vert u'-v'\Vert\geq \vert \Vert u'\Vert-\Vert v'\Vert\vert \geq \frac{2}{\alpha_{1}^{*}}-\frac{2}{(2-\kappa)\alpha_{1}^{*}}= \frac{2(1-\kappa)}{(2-\kappa)\alpha_{1}^{*}}.$$
Since 
$$\Vert u'-v'\Vert=\Vert b^{-1}(u-v)\Vert\leq \Vert b^{-1}\Vert\Vert u-v\Vert,$$
We deduce that 
$$\Vert u-v\Vert \geq \frac{2(1-\kappa)}{\alpha_{1}^{*}\Vert b^{-1}\Vert},$$
which yields 
$$d(\tilde{\Delta}_{b},\partial\Delta_{b})\geq\frac{2(2-1-\kappa)}{\alpha_{1}^{*}\Vert b^{-1}\Vert}>0.$$
Hence, $d(T_{b}(\Delta_{b}),\Delta_{b}^{c})>0$ and $T_{b}$ satisfies the hypothesis of Earl-Hamilton theorem. There exists thus a unique fixed point $w_{3}(b)$ of $T_{b}$ in $\Delta_{b}$, and for all $w\in \Delta_{b}$, $K^{\circ n}(w)$ converges to 
$w_{3}(b)$ when $n$ goes to infinity.
\end{proof} 
We can now turn to the actual computation of the Cauchy transform of $F_{\mu_{2}}$.
\begin{proposition}
If \eqref{freeDeconvoOp} has a solution, then $F_{\mu_{2}}$ is defined by 
$$F_{\mu_{2}}(b)=bw_{3}(b^{-1})F_{\mu_{3}}(w_{3}(b^{-1})^{-1}),$$
for $b\in B$ be such that $\inf\Spec{b}>K$.
\end{proposition}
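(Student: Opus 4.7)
The plan is to adapt the scalar argument of Proposition \ref{finalStepMultiplicatif} to the operator-valued setting, replacing the scalar $\eta$- and $\Sigma$-transforms by one-sided operator-valued analogues and treating the non-commutativity carefully throughout.

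First I would establish analyticity of $b\mapsto w_{3}(b^{-1})$, hence of the candidate $\tilde{F}(b):=bw_{3}(b^{-1})F_{\mu_{3}}(w_{3}(b^{-1})^{-1})$, on $\{b:\inf\Spec b>K\}$. Following the strategy of Proposition \ref{analyAddOp}, I fix an analytic initial point such as $w_{0}(b):=b^{-1}\mu_{1}(\mathcal{X})^{-1}\in\Delta_{b^{-1}}$ (it lies in $\Delta_{b^{-1}}$ because $\Vert\mu_{1}(\mathcal{X})^{-1}\Vert=(\alpha_{\mu_{1}}^{*})^{-1}<2(\alpha_{\mu_{1}}^{*})^{-1}$) and note that each iterate $T_{b^{-1}}^{\circ n}(w_{0}(b))$ is analytic in $b$ and, by the norm bounds used in Lemma \ref{welldefiKb}, uniformly bounded locally in $b$. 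Pointwise convergence to $w_{3}(b^{-1})$ combined with Montel's theorem then yields Gateaux analyticity.

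Next, I would extract the key identity from the fixed-point equation. Writing $w:=w_{3}(b^{-1})$ and $w_{1}:=bwH_{\mu_{3}}(w)w$, the equation $w=T_{b^{-1}}(w)=b^{-1}H_{\mu_{1}}(w_{1})^{-1}$ rearranges to $H_{\mu_{1}}(w_{1})=w^{-1}b^{-1}$. Multiplying on the right by $w_{1}$ produces
\begin{equation*}
H_{\mu_{1}}(w_{1})\,w_{1}\;=\;w^{-1}b^{-1}\cdot bwH_{\mu_{3}}(w)w\;=\;H_{\mu_{3}}(w)\,w,
\end{equation*}
i.e.\ $\tilde{\eta}_{\mu_{1}}(w_{1})=\tilde{\eta}_{\mu_{3}}(w)$, where $\tilde{\eta}_{\mu}(x):=H_{\mu}(x)x$ is the right-sided $\eta$-transform. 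This is the operator-valued analogue of the scalar equality $\eta_{\mu_{1}}(w_{1}^{-1})=\eta_{\mu_{3}}(w_{3}^{-1})$ that drove the scalar proof.

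The main step, and the principal obstacle, is converting this one-sided identity into the desired formula. Using $F_{\mu}(x)=x-x\eta_{\mu}(x^{-1})$ one checks that $\tilde{F}(b)=b-b\eta_{\mu_{3}}(w)$, so the equality $\tilde{F}(b)=F_{\mu_{2}}(b)$ is equivalent to the left-sided identity $\eta_{\mu_{2}}(b^{-1})=\eta_{\mu_{3}}(w)$. In the operator-valued setting the $\Sigma$-transform fails to factor multiplicatively on the nose, so one must combine the right-sided identity $\tilde{\eta}_{\mu_{1}}(w_{1})=\tilde{\eta}_{\mu_{3}}(w)$ with the hypothesis $\mu_{1}\boxtimes\mu_{2}=\mu_{3}$ through a one-sided subordination formula for operator-valued free multiplicative convolution, keeping left and right actions separate throughout; concretely, one first checks the identity $\eta_{\mu_{2}}^{<-1>}(\eta_{\mu_{3}}(w))=b^{-1}$ in the asymptotic regime $\Vert b^{-1}\Vert\to 0$, where $\eta_{\mu_{2}}^{<-1>}$ is defined on a neighborhood of $\eta_{\mu_{3}}(w)$ by the same power-series argument as in Proposition \ref{finalStepMultiplicatif}, and then Gateaux analyticity propagates the equality to the full domain $\{b:\inf\Spec b>K\}$, completing the proof.
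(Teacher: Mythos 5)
Your high-level structure (Gateaux analyticity of $w_{3}$ via Montel, an identity for $\eta_{\mu_{2}}$ on a small neighborhood, then analytic continuation, then algebraic rewriting in terms of $F$) matches the paper, and your preliminary reductions are correct: $\tilde F(b)=b-b\eta_{\mu_{3}}(w_{3}(b^{-1}))$ and $F_{\mu_{2}}(b)=b(1-\eta_{\mu_{2}}(b^{-1}))$, so the whole matter reduces to $\eta_{\mu_{2}}(b^{-1})=\eta_{\mu_{3}}(w_{3}(b^{-1}))$. You also correctly identify the central obstacle: the scalar proof's use of $\Sigma_{3}/\Sigma_{1}=\Sigma_{2}$ does not survive the passage to the operator-valued setting.

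The gap is in how you propose to bridge that obstacle. The right-sided identity you derive from the fixed-point equation, $H_{\mu_{1}}(w_{1})w_{1}=H_{\mu_{3}}(w)w$, involves only $\mu_{1}$, $\mu_{3}$ and $w_{3}$; $\mu_{2}$ never enters it. To obtain the left-sided relation $\eta_{\mu_{2}}(b^{-1})=\eta_{\mu_{3}}(w)$, you say one should "combine" this with a one-sided subordination formula, and then invoke "the same power-series argument as in Proposition \ref{finalStepMultiplicatif}," but that scalar argument is precisely the $\Sigma$-transform manipulation you just disqualified, and you never spell out the actual combination. What the paper does instead is take the BSTV subordination function $w_{2}$ of \cite[Theorem 2.2]{BSTV}, which gives simultaneously (a) the $\eta$-subordination $\eta_{\mu_{3}}=\eta_{\mu_{2}}\circ w_{2}$ and (b) a fixed-point equation $w_{2}(b)=bH_{1}(H_{\mu_{2}}(w_{2}(b))b)$; it inverts $w_{2}$ near $0$ to obtain $w(b)=w_{2}^{<-1>}(b)$, substitutes $H_{\mu_{2}}(b)=b^{-1}\eta_{\mu_{3}}(w(b))$ into (b) evaluated at $w(b)$, recognizes the result as $w(b)=T_{b}(w(b))$, and then concludes $w(b)=w_{3}(b)$ from the uniqueness in Lemma \ref{welldefiKb}. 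The $\eta_{\mu_{2}}$-relation then follows from (a) for free, and analytic continuation finishes the proof. Your derived right-sided identity turns out to be a consequence of this, not a route to it, and the crucial missing ingredient in your proposal is the BSTV fixed-point characterization of $w_{2}$ together with the observation that $w_{2}^{<-1>}$ is a fixed point of the same $T_{b}$ whose unique fixed point is $w_{3}$.
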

\begin{proof}
Let us show first that $w_{3}:\Omega\to B$ is Gateaux holomorphic and invertible. Let $\phi\in B^{*}$ and let $a\in \Omega$ and $c\in B$. Since $\Omega$ is open, there exist $U\subset \mathbb{C}$ such that for $z\in U$, $a+zc\in \Omega$. Define $f_{n}:U\to \mathbb{C}$ by $f_{n}(z)=\phi(T_{a+zc}^{\circ n}(0))$. Since $0\in \Delta_{b}$ for all $b\in \Omega$, $f_{n}$ is well-defined on $U$.  Moreover, $H_{1}$ and $\tilde{H}_{3}$ are analytic, thus $b\mapsto T_{b}^{\circ n}(0)$ is analytic on $\Omega$ for all $n\geq 1$. Therefore, each map $f_{n}$ is analytic on $U$. Since $T_{b}^{n}(w)\in \Delta_{b}$ for all $n\geq 1$, 
$$\Vert T_{b}^{\circ n}(w)\Vert\leq 2\Vert b\Vert/\alpha_{1}^{*}\leq \frac{2 K^{-1}}{\alpha_{1}^{*}}$$ 
for all $b\in \Omega,w\in \Delta_{b}$ and $n\geq 1$. Hence, the family $(f_{n})_{n\geq 1}$ is uniformly bounded and converges pointwise, which yields by Montel's theorem that $(f_{n})_{n\geq 1}$ converges uniformly to a holomorphic function $f$. By Lemma \ref{welldefiKb}, we already now that $f(z)=\phi(w_{3}(a+zb))$ which yields the Gateaux holomorphicity of $w_{3}$.

For $b$ small enough, set $w(b)=\eta_{\mu_{3}}^{<-1>}\eta_{\mu_{2}}(b)=w_{2}^{<-1>}(b)$, where $w_{2}(b)$ is the function introduced in \cite[Theorem 2.2]{BSTV}. By Lemma \ref{studyHmu} and the definition of $\eta_{\mu}$, we have $\eta_{\mu}(b)\sim b\mu(X)$ as $b$ goes to zero. Therefore, $w(b)\sim b\mu_{2}(X)\mu_{3}(X)^{-1}$ as $b$ goes to zero. Moreover, by \cite[Theorem 2.2 (3)]{BSTV},
\begin{equation}\label{formulaBelinSpei}w_{2}(b)=bH_{1}(H_{\mu_{2}}(w_{2}(b))b),
\end{equation}
and by definition of $w_{2}$, $\eta_{\mu_{3}}(b)=\eta_{\mu_{2}}(w_{2}(b))$. Hence, since we have also $\eta_{\mu_{2}}(b)=\eta_{\mu_{3}}(w(b))$, evaluating \eqref{formulaBelinSpei} on $w(b)$ yields
$$b=w(b)H_{1}(H_{\mu_{2}}(b)w(b)).$$
By Lemma \ref{studyHmu}, $H_{\mu_{2}}(b)$ converges to $\mu_{2}(\mathcal{X})$ as $b$ goes to zero; hence, by Lemma \ref{boundH1}, for $b$ small enough $H_{1}(H_{\mu_{2}}(b)w(b))$ is invertible with $\Vert H_{1}(H_{\mu_{2}}(b)w(b))^{-1}\Vert< \alpha_{1}^{*}/2$, which yields 
$$w(b)=bH_{1}(H_{\mu_{2}}(b)w(b))^{-1}\in \Delta_{b}.$$
Since $H_{\mu_{2}}(b)=b^{-1}\eta_{\mu_{2}}(b)=b^{-1}\eta_{\mu_{3}}(w(b))$, the latter equation yields
\begin{equation}\label{quasiFixedPoint}
w(b)=bH_{1}(b^{-1}\eta_{\mu_{3}}(w(b))w(b))^{-1}=bH_{1}(b^{-1}\tilde{H}_{3}(w(b)))^{-1}=T_{b}(w(b)).
\end{equation}
Therefore, $w(b)$ is a fixed point of $T_{b}$. Since $w(b)\in \Delta_{b}$, we must have $w(b)=w_{3}(b)$ by Lemma \ref{welldefiKb}. Since $\eta_{\mu_{2}}(b)=\eta_{\mu_{3}}(w(b))$, this yields 
$\eta_{\mu_{2}}(b)=\eta_{\mu_{3}}(w_{3}(b))$ for $b$ small enough. The functions $\eta_{\mu_{2}}$ and $\eta_{\mu_{3}}\circ w_{3}$ are two Gateaux holomorphic defined on the connected domain $\Omega$ and they coincide on an open subset of $\Omega$, thus they are equal on $\Omega$, and we have 
$$\eta_{\mu_{2}}(b)=\eta_{\mu_{3}}(w_{3}(b))$$
for $b\in \Omega$. Let $b\in B$ be such that $\inf \Spec b>K$. Then, $b$ is invertible and $b^{-1}\in \Omega$. Therefore,
\begin{align*}
F_{\mu_{2}}(b)=&b(1-\eta_{\mu_{2}}(b^{-1}))\\
=&b(1-\eta_{\mu_{3}}(w_{3}(b^{-1}))\\
=&bw_{3}(b^{-1})F_{\mu_{3}}(w_{3}(b^{-1})^{-1}).
\end{align*}

\end{proof}

\section{Implementation of free deconvolution in the scalar case }

In this section we restrict ourselves to the scalar valued case. As explained in the introduction, the subordination techniques developed in Theorem \ref{MainSA1} and Theorem \ref{ThmScalMultiplicatif} provide a first step in the recovery of the unknown distribution $\mu_{2}$ by giving the distribution of $\mu_2*\mathcal{C}_{\lambda}$, where $\mathcal{C}_{\lambda}$ is a Cauchy distribution with a parameter $\lambda$ depending on the first moments of $\mu_{1}$ and $\mu_{3}$.  It remains to achieve the classical deconvolution by the Cauchy distribution in order to complete the deconvolution process. We describe here how to implement both steps. 

\subsection{Free subordination functions}

One very useful thing about Theorem 1.2 and Theorem 1.4 is that they provide a very direct method to calculate the subordination functions.  We describe briefly this method for the additive convolution; the multiplicative case is similar. 

First we choose a small  $\epsilon>0$ which will be our level of approximation. Now, given $G_{\mu_1}$ and $G_{\mu_3}$ we can easily calculate the functions $T_z(w)$ from part (4) of Theorem 1.2.  

We start with an arbitrary point $w_0(z)$ in for some proper domain  $\mathbb{C}_\sigma$ and define $w^{(n+1)}(z)=T_z(w^{(n)}(z))$. Theorem 1.2 ensures that there exists $N>0$, such that  $w^{(N+1)}(z)-w^{(N)}(z)<\epsilon$,  we call $w^{(N+1)}(z)=w_\infty(z)$. Our approximation for $F_{\mu_2}(z)$ is given by $F_3(w_\infty(z)).$ Here we note that (\ref{notSurjective}) implies that for $D=D\left(z,\frac{2\sigma_{1}^{2}}{\Im(z)}\right)$,  $T_z:D\to D$ has a fixed point inside $D$ and thus the speed of convergence to the fixed point is exponential.

The time to wait to obtain $w_\infty$ clearly depends on the choice of $w_0$ and while the method is very fast, since we are calculating $F_{\mu_2}(z)$ for as much possible $z$'s in some line, a wise choice for $w_0(z)$ is crucial to ensure a fast algorithm.  In this respect, we use the fact that subordination functions are continuous, so  $w_\infty(z)$ is close to  $w_\infty(z+\delta)$  for small $\delta>0$. With this idea in mind, provided that we know we $w_\infty(z)$, we choose $w_0(z+\delta)=w_\infty(z)$.

\subsection{Classical deconvolution with the Cauchy distribution}
Deconvolving with a Cauchy kernel amounts to solve the Fredholm equation of the first kind (see \cite{Gr} for more details on this class of equations)
$$\int_{\mathbb{R}}K(x,y)d\mu(y)=F(x),\, x\in\mathbb{R},$$
with $K(x,y)=\frac{1}{\pi}\frac{\lambda}{(x-y)^{2}+\lambda^{2}}$, $F$ given and $\mu$ unknown. The latter is known to be a severely ill-posed problem and thus requires regularization. The natural regularization procedure is given by a Tychonov regularization used jointly quadratic programming, which we now explain briefly. 

After having discretized the problem, we end up with the linear equation 
\begin{equation}\label{discrete_equation}
KU=V,
\end{equation}
with $K\in M_{n\times m}(\mathbb{R}), V\in \mathbb{R}^{n}$ and $U\in \mathbb{R}^{n}$ are respectively discrete versions of the Cauchy kernel, the image $F$ and the unknown density $d\mu$. The ill-posedness of the problem amounts to the fact that $K$ is singular (or has very small non-zero eigenvalues), which makes the solution $U$ unstable with respect to small perturbations of $V$. The goal of Tychonov regularization is to replace the negligible eigenvalues of $K$ by small ones in order to make the linear problem stable. Namely, instead of solving \eqref{discrete_equation}, we will look for a solution which minimizes the convex function  $\Vert KU-V\Vert^{2}+\alpha^{2}\Vert U\Vert^{2}$, where $\alpha>0$ is a parameter to be chosen. Moreover, we want to ensure that the solution is a probability distribution, which results in the following minimization problem:
\begin{equation}\label{regularized_discrete_equation}
U=\underset{\substack{U_{i}\geq 0\\ \sum U_{i}*\delta=1}}{\argmin}(\Vert KU-V\Vert^{2}+\alpha^{2}\Vert V\Vert^{2}),
\end{equation}
where $\delta$ is the step of the discretization. The choice of the parameter $\alpha$ is crucial in the success of the Tychonov regularization, and we refer to \cite[Section 3.3]{Gr} for a possible strategy for the choice of such a parameter.
In order to achieve the minimization of \eqref{regularized_discrete_equation}, we used the quadratic programming package CVXOPT \cite{CVXOPT} with {\it Python} (see also \cite{BL} for theoretical background on the subject). For each of the examples below, the result is obtained in few seconds.

\subsection{Simulations}

We provide simulations in the additive and multiplicative case for two types examples: one example where the unknown distribution is atomic and one example where the unknown distribution has a density. In order to show the efficiency of the method for concrete applications, we choose to show results for random matrices, which are approximately free, instead of free variables. The simulations are done with {\it Python}.

\subsubsection*{Additive case.}

Let us consider a (possibly random) matrix $A\in M_{n}(\mathbb{R})$ with limiting spectral distribution $\mu_A$ and a Wigner matrix $W\in M_{n}(\mathbb{R})$, which the theory tells us that has a semicircular limiting spectral distribution $\mathbb{s}$.

We simulate $A+W$ and want to recover an approximation for $\mu_A$. The theory tells us that the distribution of $A+W$ is given by $\mathbf{s}\boxplus\mu_A$. 
 To recover $\mu_A$ we we first calculate the Cauchy transform of $\mu_{A+B}$, given by $$G_\mu=\frac{1}{n}\sum\frac{1}{z-\lambda_i}$$
and then free deconvolve the corresponding semicircle distribution $\mathbf{s}$ from $\mu_{A+W}$ .

\begin{example}[Discrete distribution] $A$ is a diagonal matrix of size 1200 with eigenvalues $-1,0$ and $1$ with respective weights $1/2,1/6$ and $1/3$.  Figure 1 shows the results of our method.
\begin{figure}[h!]
 \includegraphics[height=3cm,width=5cm]{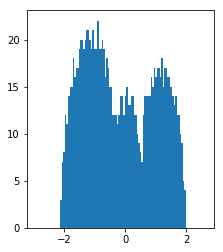}
\includegraphics[height=3cm,width=5cm]{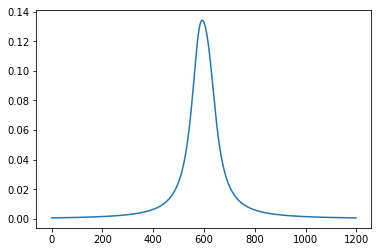} \includegraphics[height=3cm,width=5cm]{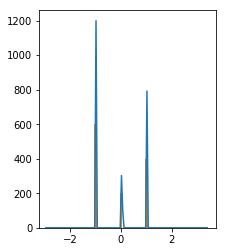}
\caption{Histogram of the spectral distribution of $A+W$ (left), result after first step of the deconvolution (center), and the final result using Tychonov regularization compared with the original atomic distribution (right).}
\end{figure}

\end{example}

 \begin{example}[Marchenko pastur] $A=XX^{*}$, where $X$ is a random rectangular matrix of size $800\times 1600$ with independent Gaussian entries of variance $1/n$ (with $n=800$).  Figure 2 shows the results of our method.
\begin{figure}[h!]
 \includegraphics[height=3cm,width=5cm]{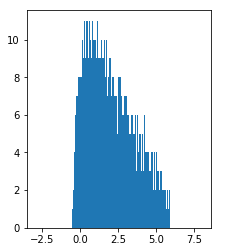}
\includegraphics[height=3cm,width=5cm]{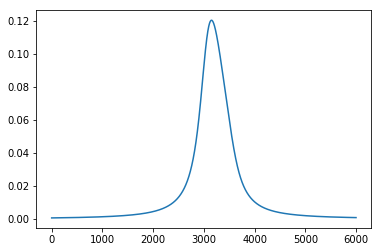} \includegraphics[height=3cm,width=5cm]{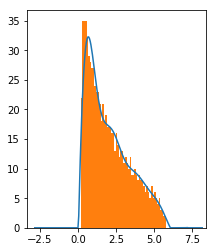}
\caption{Histogram of the spectral distribution of $A+W$  (left) , result of the first step of the deconvolution (center), and result after Tychonov regularization compared with the histogram of eigenvalues of the original Wishart matrix.}
\end{figure}
\end{example}

\subsubsection*{Multiplicative case}
We now consider a matrix $A\in M_{n}(\mathbb{R})$ and Wigner matrix $W\in M_{n}(\mathbb{R})$.  We consider the problem of recovering the spectral distribution $A$ from the disitrbution of $WAW^{*}$  .   
 
 Since $WW^*$ is a Wishart matrix whose spectral distribution approximates the Marchenko-Pastur distribution (or free Poisson) of parameter $1$, $\mathbf{m}_1$ then $WAW^{*}$ approximates the free multiplicative convolution $\mathbf{m}_1\boxtimes\mu_A$.
 
  In order to approximate the original spectral distribution of the matrix $A$, we have to calculate the multiplicative free deconvolution of the spectral distribution of $WAW^{*}$ with a Marchenko-Pastur distribution. 
  For these examples we found that we can use a lower parameter than the one theoretically given by our theorem in the first step of the deconvolution. This improved the realization of the second step. 
\newpage
\begin{example}[Discrete distribution]
 $A$ is a diagonal matrix with eigenvalues $-3,1/2,4$ and $1$ with respective weights $1/2,1/6$ and $1/3$. 
\begin{figure}[h!]
\includegraphics[height=3cm,width=5cm]{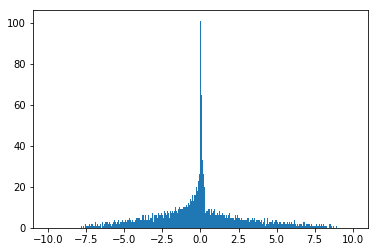}
\includegraphics[height=3.1cm,width=5cm]{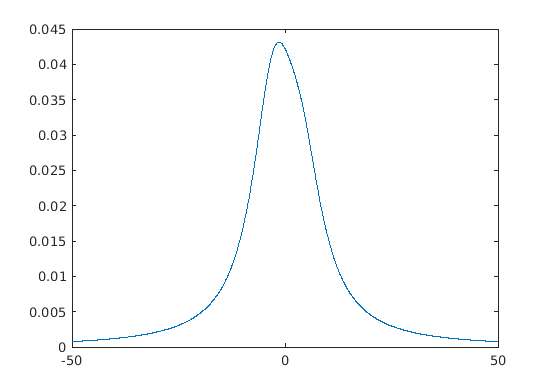} \includegraphics[height=3cm,width=5cm]{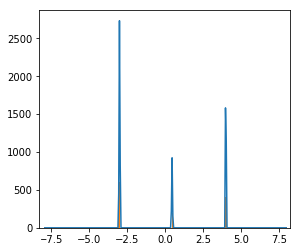}
\caption{Histogram of the spectral distribution of $WAW^{*}$ of size $n=1200$ (left), result of the first step of the deconvolution (center) and result after Tychonov's regularization compared with the orginal distribution (right) .}
\end{figure}
\end{example} 

\begin{example}[Modification of marchenko pastur] $A=1/2(X^2+(X^{*})^{2})$, where $X$ is a random square matrix of size $n=800$ with independent Gaussian entries with variance $1/n$. 
\begin{figure}[h!]
\includegraphics[height=3cm,width=5cm]{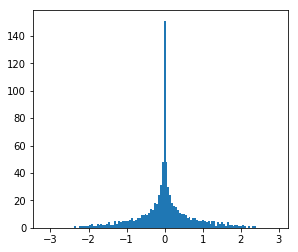}
\includegraphics[height=3cm,width=5cm]{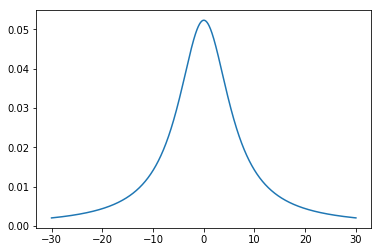} \includegraphics[height=3cm,width=5cm]{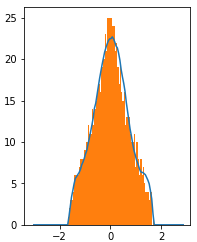}
\caption{Histogram of the spectral distribution of $WAW^{*}$ (left), result of the first step of the deconvolution (center), result after Tychonov regularization and quadratic programming (with $n=800$) and comparison with the histogram of eigenvalues of the original random matrix (right).}
\end{figure}
\end{example}

\end{document}